\documentclass[11pt,a4paper]{article}
\usepackage{amsmath,amsfonts,amsxtra,amssymb,amscd,amsthm}
\usepackage{indentfirst,anysize,graphicx}
\marginsize{3cm}{2cm}{1cm}{1cm}
\linespread{1.1}
\parskip 5pt

\newtheorem{theorem}{Theorem}[section]
\newtheorem{corollary}{Corollary}[section]
\newtheorem{proposition}{Proposition}[section]
\newtheorem{lemma}{Lemma}[section]
\newtheorem{definition}{Definition}[section]
\newtheorem{remark}{Remark}[section]

\begin{document}

\title{Classical and non-classical central limit theorems for random sums of independent random variables of a double sequence}
\author{Tran Loc Hung\footnote{Ho Chi Minh City, Vietnam. Email: tlhungvn@gmail.com}}
\maketitle

Dedicated to Academician Sh. K. Formanov (Academy of Sciences of Uzbekistan) on the occasion of his 84th birthday

\begin{abstract}
Since the appearance of H. Robbins article (1948), the central limit theorems for random sums have been studied for about 70 years. The central limit theorems for random sums of independent random variables play a very important role in various disciplines such as statistics, financial mathematics, insurance, etc. The purpose of this paper is to randomize some well-known classical and non-classical central limit theorems for sums of independent (not necessarily identically distributed) random variables of a double sequence, with the conditions for determining their validity. The results obtained in this paper are extensions and generalizations of known ones.
\end{abstract}

\vskip0.5cm
\noindent {\bf Key words and phrases}: \quad Lindeberg's condition; Lyapunov's condition; Feller's condition; Rotar's condition;  Condition of asymptotic infinitesimality; Lindeberg-Feller's theorem; Rotar's theorem; Random sums; Characteristic function; Chebyshev's inequality; Zolotarev distance.\\
\noindent {\bf 2020 Mathematics Subject Classification}: \quad 60E05, 60E10, 60F05, 60G50.

\section{Introduction}\label{sec:1}
Let $\{X_{n, j}\}:=\{X_{n, j}: 1\leq j\leq k_{n}, n\geq 1\}$  be a double sequence of independent in each row random variables, which satisfies the following conditions for classical triangular arrays:
\begin{enumerate}
\item[{\bf (A)}.] For each $n\geq 1,$ the random variables $X_{n, 1}, X_{n, 2}, \ldots, X_{k_{n}}, \ldots$ in the nth row are mutually independent (not necessary identically distributed);
\item[{\bf (B)}.] The double sequence $\{X_{n, j}\}$ has the zero means $\mathbb{E}X_{n, j}=0$ for $n\geq 1$ and $1\leq j\leq k_{n};$
\item[{\bf (C)}.] The sum of all variances equals one, i.e., $\sum\limits_{j=1}^{k_{n}}\mathbb{D}X_{n, j}=\sum\limits_{j=1}^{k_{n}}\sigma^{2}_{n, j}=1.$
\end{enumerate}
From now on, the sequence of natural numbers $\{k_{n}>0\}$ is suggested that $k_{n}\to\infty$ as $n\to\infty.$ For any $n\geq 1,$ set 
\begin{equation}\label{equ:1.1}
S_{n, k_{n}}:=X_{n, 1}+X_{n, 2}+\dots+ X_{n, k_{n}}
\end{equation}
the partial sums of $X_{n, j},$ and $B^{2}_{n, k_{n}}:=\mathbb{D}S_{n, k_{n}}$ denotes the variance of the sums $S_{n, k_{n}}.$\\
From the conditions ({\bf B}) and ({\bf C}), it is clear that
 \[
 \mathbb{E} S_{n, k_{n}}=0 \quad \text{and}\quad \mathbb{D} S_{n, k_{n}}=1.
 \]
Now, when the independent summands increase to infinity, we are interested in asymptotic behavior of the probability distributions of the following normalized sums: 
\begin{equation}\label{equ:1.2}
\frac{S_{n, k_{n}}-\mathbb{E}S_{n, k_{n}}}{\sqrt{\mathbb{D}S_{n, k_{n}}}}=\sum\limits_{j=1}^{k_{n}}X_{n, j}.
\end{equation}
Define
\begin{equation}\label{equ:1.3}
\Delta_{n, k_{n}}=\sup\limits_{x\in\mathbb{R}}\bigg|P\bigg(\sum\limits_{j=1}^{k_{n}}X_{n, j}<x\bigg)-\Phi_{0,1}(x) \bigg|,
\end{equation}
where $\Phi_{0,1}(x)=\frac{1}{\sqrt{2\pi}}\int\limits_{-\infty}^{x}e^{-y^{2}/2}$ stands for the distribution function of the standard normally distributed random variable, denoted by $X^{*}\stackrel{D}{\sim}\mathcal{N}(0,1).$ 
\begin{definition}\label{def:1.1}\quad A double sequence $\{X_{n, j}\}$ obeys the central limit theorem if 
\begin{equation*}
\Delta_{n, k_{n}}=o(1)\quad\text{as}\quad n\to\infty.\tag{CLT}
\end{equation*}
\end{definition}
The central limit theorem (CLT) shows that the distribution of a large number of independent (in each row) small random variables $X_{n, j}$ for $n\geq 1, 1\leq j\leq k_{n},$ can be approximated by a standard normal distribution when the number of independent individual summands is large enough. The notion of  "small random variables" may be described by the following concept.
\begin{definition}\label{def:1.2}\quad The random variables $X_{n, j}$ for $n\geq 1, 1\leq j\leq k_{n},$ are called to be asymptotic infinitesimal if
\begin{equation*}
\max\limits_{1\leq j\leq k_{n}}P(|X_{n, j}|\geq \epsilon)=o(1)\quad\text{as}\quad n\to\infty,\tag{I}
\end{equation*}
for any $\epsilon >0.$ 
\end{definition}
Thus, for $n\geq1,$ the normalized partial sums $\sum\limits_{j=1}^{k_{n}}X_{n, j}$ are formed by the "small" independent individual summands $X_{n, j}$ and their limiting distributions are non-degenerate since the number of independent individual summands is large enough. A family of the central limit theorems is divided into two classes depending on the use of the condition (I). Following Zolotarev \cite{Zolotarev1997}, theorems on limit distributions for the normalized summations $\sum\limits_{j=1}^{k_{n}}X_{n, j}$ proved without the condition (I) are said to be non-classical. The central limit theorems in classical versions are the Lindeberg-Feller's, and Lyapunov's  theorems. Besides, the non-classical central limit theorems were first studied by P. L{\'e}vy, and an extension of the Lindeberg-Feller central limit theorem for the sum of independent random variables in a triangular array in a non-classical situation was due to Zorotarev (1967) and Rotar (1975) (see \cite{Rotar1975} for more details). For a deeper discussion related to the non-classical Rotar's theorem, we refer the reader to \cite{Formanov2019},  \cite{Formanov2022}, \cite{Ibragimov2020}, \cite{Rotar1975}, \cite{Rotar1996}, and \cite{Shiryaev1996}. 

Since the appearance of Robbins' article \cite{Robbins1948}, the limit theorems for random sums have been studied for about 70 years now.  The most of the limit results concerning random sums of independent random variables, such as the random-sum central limit theorem, and their importance in various disciplines such as financial mathematics and insurance, are studied. The central limit theorems for random sums are mentioned in many classical documents such as \cite{Blum1963}, \cite{Butzer1983}, \cite{Chen2011}, \cite{Cioczek1987},\cite{Feller1971}, \cite{Gnedenko1996}, \cite{Gut2005}, \cite{Kalashnikov1997}, \cite{Kruglov1990}, \cite{Neammanee2004},  \cite{Renyi1970},  \cite{Robbins1948}, \cite{Rychlik1976}, \cite{Rychlik1979}, \cite{Shanthikumar1984}, etc.

The main motivation of this study is to randomize the  classical and non-classical central limit theorems of independent in each row (not necessary identically distributed) random variables of the double sequences, when the non-random index $k_{n}$ of the partial sums $S_{n, k_{n}}$ is replaced by a sequence $(\nu_{n}, n\geq 1)$ of  integer-valued positive random variable,  independent of all independent individual summands $X_{n, j}$ for $n\geq 1, 1\leq j\leq k_{n}.$ From now on,  we assume that $\nu_{n}\stackrel{P}{\longrightarrow}\infty$ when $n\to\infty.$  For $n\geq 1,$ the sums of a random number $\nu_{n}$ of all $X_{n, j},$ defined  in form
 \begin{equation}\label{equ:1.4}
 S_{n, \nu_{n}}:=X_{n, 1}+X_{n, 2}+\ldots+X_{n, \nu_{n}},
 \end{equation}
are so-called random sums. The random sums are named like Poisson, geometric, or negative binomial random sums, and so on,  depending on the distributions of random variables $\nu_{n}, n\geq 1.$ (see  \cite{Feller1971}, \cite{Gnedenko1996}, \cite{Gut2005},  \cite{Kalashnikov1997}, \cite{Kruglov1990},   \cite{Renyi1970}, \cite{Robbins1948},  and \cite{Hung2021}, for more details). 

 For each $n\geq 1,$ set
\[
\Delta_{n, \nu_{n}}=\sup\limits_{x\in\mathbb{R}}\bigg|P\bigg(\sum\limits_{j=1}^{\nu_{n}}X_{n, j}<x\bigg)-\Phi_{0, 1}(x) \bigg|.
\]
The central limit theorems for random sums are an extension of the central limit theorem (CLT) in following form:
\begin{definition}\label{def:1.3}\quad A double sequence $\{X_{n, j}\}$ obeys the random central limit theorem if 
\begin{equation*}
\Delta_{n, \nu_{n}}=o(1)\quad\text{as}\quad n\to\infty,\tag{RCLT}.
\end{equation*}
\end{definition}
Following  \cite{Gut2005}, we have
\[
\Delta_{n, \nu_{n}}=\sum\limits_{k_{n}=1}^{\infty}P(\nu_{n}=k_{n})\Delta_{n, k_{n}}=
\sum\limits_{k_{n}=1}^{\infty}P(\nu_{n}=k_{n})\sup\limits_{x\in \mathbb{R}}\bigg|P\bigg(\sum\limits_{j=1}^{k_{n}}X_{n, j}<x\bigg)-\Phi_{0,1}(x) \bigg|.
\]
Thus,  if $\nu_{n}\stackrel{a.s.}{=}k_{n}$ for $n\geq 1,$ then the central limit theorem for random sum (RCLT) deduces the non-random central limit theorem (CLT). 

A central limit theorem for random sum of  independent and identically distributed (i.i.d.) random variables  was first studied by Robbins \cite{Robbins1948}. The case of independent (not necessary identically distributed ) random variables was also considered later in \cite{Chen2005}, \cite{Cioczek1987}, \cite{Feller1971}, \cite{Gnedenko1996}, \cite{Gut2005},  \cite{Kruglov1990}, \cite{Neammanee2004}, \cite{Renyi1970}, \cite{Rychlik1976}, and \cite{Rychlik1979}, etc. 

 The main aim of this paper is to randomize some well-known classical and non-classical central limit theorems for sums of independent (not necessarily identically distributed) random variables of a double sequence, with the conditions for their validity. In this article, the central limit theorems for random sums representing classical classes such as Lindeberg's, Lyapunov's, and Feller's  are investigated. The Rotar's central limit theorem for random sums is also introduced in this paper as a result in non-classical situations.  The mathematical tools used  for study in this paper are characteristic functions, Chebyshev's inequality,  and Zolotarev distance

The layout of this paper is as follows: The second section provides some preliminary information on classical and non-classical central limit theorems, providing familiar conditions such as the Lindeberg's, Lyapunov's, Feller's, and Rotar's conditions. The well-known central limit theorems in classical and non-classical situations, like the Lindeberg's, Lindeberg-Feller's,  and Rotar's, are introduced. In Section 3, some definitions related to the random conditions in Lindeberg's, Lyapunov's,  Feller's, and Rotar's  types and their relationships are presented. The random condition of asymptotic  infinitesimal for the independent individual summands $X_{n, j}$ is defined, and the equivalent statements are established.  Section 4 is devoted to randomizing the central limit theorems for random sums of independent (not necessarily identically distributed) random variables of a double sequence like Lindeberg's, Lyapunov's, Lindeberg-Feller's, and Rotar's theorems. The last section closes with concluding remarks on the  relationships between the  conditions and the  central limit theorems for random sums of independent random variables of a  sequence $(X_{j}, j\geq 1)$ in classical and non-classical situations when the "series form" is used.

In this paper, some notations are used, like the $X^{*}\stackrel{D}{\sim}\mathcal{N}(0,1)$ means that the standard normally distributed random variable $X^{*}$ follows the standard normal distribution function $\Phi_{0,1}(x),$ and the $\stackrel{P}{\longrightarrow}\infty$  denotes the convergence in probability. The symbols $X\stackrel{a.s.}{=}Y$ and $X\stackrel{D}{=}Y$ show that the equality is almost surely and  the equality in distribution, respectively.
The expression $a_{n}=o(1)$ as $n\to\infty$ stands for $\lim\limits_{n\to\infty}a_{n}=0.$

\section{Preliminary information}\label{sec:2}

\subsection{Conditions for the validity of the central limit theorems}
The information in this part concerning the conditions in determining for validity of the non-random central limit theorems (CLT). The references \cite{Feller1971},  \cite{Gnedenko1949}, \cite{Petrov1975}, \cite{Petrov1995}, \cite{Renyi1970}, \cite{Rotar1996},  \cite{Shiryaev1996}, and \cite{Zolotarev1997} are devoted to the study of the central limit theorems for sums of independent random variables and the  conditions for the validity of those theorems. 

We assume that, for $n\geq 1$ and $1\leq j\leq k_{n},$ the desired random variables  $X_{n, j}$ are satisfied the condition of asymptotic infinitesimality (I),   Recalling a result in \cite{Petrov1995} (see  Lemma 3.1 on page 89).
\begin{proposition}\label{pro:2.1}\quad The condition (I) is equivalent to the following condition:    
\begin{enumerate}
\item[{\bf A}.] \quad As $n\to\infty,$
\[
\max\limits_{1\leq j\leq k_{n}}\mathbb{E}\bigg(\frac{X^{2}_{n, j}}{1+X^{2}_{n, j}} \bigg)=o(1),
\]
\item[{\bf B}.] \quad For any $t\in\mathbb{R},$ 
\[
\max\limits_{1\leq j\leq k_{n}}\bigg|f_{n, j}(t)-1\bigg|=o(1), \quad\text{as}\quad n\to\infty,
\]
here  $f_{n, j}(t):=\mathbb{E} e^{itX_{n, j}}$ stands for the characteristic functions of random variables $X_{n, j}.$
\end{enumerate}
\end{proposition}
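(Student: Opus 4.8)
The plan is to establish the three pairwise equivalences by treating $(\mathrm{I})$ as the hub: first I would show $(\mathrm{I})\Leftrightarrow\mathbf{A}$ using only elementary estimates for the bounded function $g(x)=x^2/(1+x^2)$, and then close the remaining equivalence through $(\mathrm{I})\Rightarrow\mathbf{B}$ and $\mathbf{B}\Rightarrow(\mathrm{I})$. The function $g$ is even, bounded by $1$, increasing in $|x|$, and satisfies $g(x)\le x^2$ everywhere, which is precisely what makes both directions of $(\mathrm{I})\Leftrightarrow\mathbf{A}$ go through.

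For $\mathbf{A}\Rightarrow(\mathrm{I})$, since $\{|X_{n,j}|\ge\epsilon\}=\{g(X_{n,j})\ge g(\epsilon)\}$ by monotonicity of $g$, Markov's inequality applied to the nonnegative variable $g(X_{n,j})$ gives $P(|X_{n,j}|\ge\epsilon)\le\frac{1+\epsilon^2}{\epsilon^2}\,\mathbb{E}g(X_{n,j})$; taking the maximum over $j$ and letting $n\to\infty$ yields $(\mathrm{I})$. For the converse $(\mathrm{I})\Rightarrow\mathbf{A}$, I would split $\mathbb{E}g(X_{n,j})$ over $\{|X_{n,j}|<\epsilon\}$ and its complement, bounding the first part by $\epsilon^2$ (using $g(x)\le x^2$) and the second by $P(|X_{n,j}|\ge\epsilon)$ (using $g\le 1$), so that $\max_j\mathbb{E}g(X_{n,j})\le\epsilon^2+\max_j P(|X_{n,j}|\ge\epsilon)$; since $\epsilon>0$ is arbitrary, $(\mathrm{I})$ forces $\max_j\mathbb{E}g(X_{n,j})\to 0$.

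The implication $(\mathrm{I})\Rightarrow\mathbf{B}$ follows the same truncation idea applied to characteristic functions. Using $|f_{n,j}(t)-1|\le\mathbb{E}|e^{itX_{n,j}}-1|$ together with the two elementary bounds $|e^{itx}-1|\le|t|\,|x|$ and $|e^{itx}-1|\le 2$, I would split the expectation at level $\epsilon$ to obtain $\max_j|f_{n,j}(t)-1|\le|t|\epsilon+2\max_j P(|X_{n,j}|\ge\epsilon)$, and again let $n\to\infty$ and then $\epsilon\downarrow 0$.

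The genuinely delicate step, and the one I expect to be the main obstacle, is the reverse implication $\mathbf{B}\Rightarrow(\mathrm{I})$, because it must recover tail control from information that $\mathbf{B}$ supplies only pointwise in $t$. Here I would use the smoothing identity $\frac{1}{\tau}\int_{-\tau}^{\tau}\bigl(1-\mathrm{Re}\,f_{n,j}(t)\bigr)\,dt=2\,\mathbb{E}\Bigl(1-\tfrac{\sin(\tau X_{n,j})}{\tau X_{n,j}}\Bigr)$ and the fact that $1-\frac{\sin u}{u}\ge\frac12$ whenever $|u|\ge 2$. Choosing $\tau=2/\epsilon$ makes $|\tau X_{n,j}|\ge 2$ on $\{|X_{n,j}|\ge\epsilon\}$, which gives $P(|X_{n,j}|\ge\epsilon)\le\frac{1}{\tau}\int_{-\tau}^{\tau}|f_{n,j}(t)-1|\,dt$. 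Taking the maximum over $j$ inside the integral and invoking the dominated convergence theorem with dominating constant $2$ on $[-\tau,\tau]$ converts the pointwise convergence guaranteed by $\mathbf{B}$ into $\int_{-\tau}^{\tau}\max_j|f_{n,j}(t)-1|\,dt\to 0$, hence $\max_j P(|X_{n,j}|\ge\epsilon)\to 0$ for every fixed $\epsilon>0$, which is exactly $(\mathrm{I})$.
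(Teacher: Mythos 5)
Your proof is correct, and it is genuinely more than the paper provides: the paper offers no proof of Proposition 2.1 at all, simply citing Petrov (1995, Lemma 3.1, p.~89). The closest in-paper argument is the proof of the random analogue, Proposition 4.4, and there the author uses exactly your elementary estimates: the split of $\mathbb{E}\,g(X_{n,j})$ with $g(x)=x^2/(1+x^2)$ at level $\epsilon$ giving $\max_j\mathbb{E}\,g(X_{n,j})\le\frac{\epsilon^2}{1+\epsilon^2}+\max_j P(|X_{n,j}|\ge\epsilon)$ for one direction, the Markov-type bound $\mathbb{E}\,g(X_{n,j})\ge\frac{\epsilon^2}{1+\epsilon^2}P(|X_{n,j}|\ge\epsilon)$ for the converse, and the bound $\max_j|f_{n,j}(t)-1|\le\epsilon|t|+2\max_j P(|X_{n,j}|\ge\epsilon)$ for the characteristic-function direction. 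What the paper never proves, in either Proposition 2.1 or Proposition 4.4, is the hard converse $\mathbf{B}\Rightarrow(\mathrm{I})$; Proposition 4.4's proof stops after $(\mathbf{A})\Rightarrow(\mathbf{C})$ even though the statement asserts equivalence. Your smoothing argument fills exactly this gap, and it is sound: the identity $\frac{1}{\tau}\int_{-\tau}^{\tau}\bigl(1-\mathrm{Re}\,f_{n,j}(t)\bigr)\,dt=2\,\mathbb{E}\bigl(1-\frac{\sin(\tau X_{n,j})}{\tau X_{n,j}}\bigr)$ follows from Fubini, the integrand on the right is nonnegative and at least $\tfrac12$ on $\{|\tau X_{n,j}|\ge 2\}$ (since $|\sin u|/|u|\le 1/|u|\le\tfrac12$ there), so with $\tau=2/\epsilon$ you get $P(|X_{n,j}|\ge\epsilon)\le\frac{1}{\tau}\int_{-\tau}^{\tau}|f_{n,j}(t)-1|\,dt\le\frac{1}{\tau}\int_{-\tau}^{\tau}\max_j|f_{n,j}(t)-1|\,dt$, and dominated convergence (the integrand is bounded by $2$ and tends to $0$ pointwise by $\mathbf{B}$) finishes it. In short: your argument is self-contained and complete, whereas the paper delegates the full statement to Petrov and, where it does argue (Proposition 4.4), omits precisely the direction you took care to prove.
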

It is worth pointing out that the constraint (I) is needed if we want to make the limiting law for the distributions of the normalized sums $S_{n, k_{n}}$ insensitive to the behavior of independent individual summands $X_{n, j}$ for $n\geq 1$ and $1\leq j\leq k_{n}.$ The construction of the theory of the summation of independent random variables when condition (I) is met is mainly associated with the names of P. L{\'e}vy, W. Feller, A.Ya. Khinchin, A.N. Kolmogorov, and B.V. Gnedenko (see \cite{Gnedenko1949} for more details). In particular, these mathematicians have proved that the class of limit distribution laws for $F_{n, k_{n}}(x):=P\bigg(\sum\limits_{j=1}^{k_{n}}X_{n, j}<x\bigg)$ (in the sense of weak convergence) coincides with the class of all infinitely divisible distributions (see  \cite{Gnedenko1949} and \cite{Petrov1995}, for more details). 

The next conditions are those for the determining of the central limit theorems.
\begin{definition}\label{def:2.1}\quad A double sequence $\{X_{n, j}\}$ is said  to satisfy the Feller condition if  
\begin{equation*}
\max\limits_{1\leq j\leq k_{n}}\sigma^{2}_{n, j}=o(1). \quad\text{as}\quad n\to\infty\tag{F}
\end{equation*}
\end{definition}
\begin{proposition}\label{pro:2.2}\quad The condition (I) is deduced by the Feller condition (F), i.e., 
\[
(F)\Longrightarrow (I) \quad\text{as}\quad n\to\infty.
\]
\end{proposition}
\begin{proof}
By Chebyshev's inequality, for any $\epsilon>0,$ we have
\[
\max\limits_{1\leq j\leq k_{n}}P(|X_{n, j}|\geq \epsilon)\leq \frac{1}{\epsilon^{2}}\max\limits_{1\leq j\leq k_{n}}\sigma^{2}_{n, j}.
\]
It is clear that, from the above inequality,  the condition (I) is satisfied if the condition (F) holds.
\end{proof}
 The existence of the condition (F) also explains why the number $k_{n}$ of independent individual summands $X_{n, j}$  in the normalized sums $\sum\limits_{j=1}^{k_{n}}X_{n, j}$ must be large enough. 
 \begin{proposition}\label{pro:2.3}\quad The $k_{n}\longrightarrow\infty$ as $n\to\infty,$ f the Feller condition (F) holds.
\end{proposition}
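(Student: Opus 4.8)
The plan is to combine the normalization condition (C), which fixes the total variance at one, with the Feller condition (F), which forces the largest individual variance to vanish. The elementary observation driving the argument is that if $k_n$ nonnegative numbers sum to one while each is bounded above by their common maximum, then the number of summands cannot stay bounded as that maximum shrinks to zero.

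First I would invoke condition (C) to write $\sum_{j=1}^{k_n}\sigma^2_{n,j}=1$, and then bound every summand by the largest one. This yields
\[
1=\sum_{j=1}^{k_n}\sigma^2_{n,j}\leq k_n\cdot\max_{1\leq j\leq k_n}\sigma^2_{n,j},
\]
and rearranging gives the lower bound
\[
k_n\geq\frac{1}{\max_{1\leq j\leq k_n}\sigma^2_{n,j}}.
\]

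Next I would bring in the Feller condition (F), which asserts that $\max_{1\leq j\leq k_n}\sigma^2_{n,j}=o(1)$ as $n\to\infty$. Since this maximum is strictly positive for each fixed $n$ (it cannot vanish, for otherwise all variances would be zero, contradicting that they sum to one), its reciprocal is well defined and tends to $+\infty$. Feeding this into the lower bound above forces $k_n\to\infty$, which is precisely the assertion of the proposition.

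There is essentially no obstacle here: the entire argument is a single chain of inequalities anchored by condition (C). The only point deserving a word of care is the well-definedness of the reciprocal, i.e. the positivity of the maximum variance for each $n$; this is immediate from (C), since a sum of nonnegative terms equal to one cannot have all terms equal to zero.
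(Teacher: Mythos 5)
Your proof is correct and follows essentially the same argument as the paper: bound the unit total variance by $k_n$ times the maximal variance, rearrange, and let the Feller condition drive the reciprocal to infinity. The remark on positivity of the maximum (ensuring the reciprocal is well defined) is a small point of care that the paper leaves implicit, but otherwise the two proofs coincide.
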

\begin{proof}
By the expression $ \sum\limits_{j=1}^{k_{n}}\sigma^{2}_{n, j}=1,$ we conclude that
\[
1=\sum\limits_{j=1}^{k_{n}}\sigma^{2}_{n, j}\leq k_{n}\times \bigg(\max\limits_{1\leq j\leq k_{n}}\sigma^{2}_{n, j} \bigg),
\]
and it follows that
\[
k_{n}\geq \bigg(\frac{1}{\max\limits_{1\leq j\leq k_{n}}\sigma^{2}_{n, j}} \bigg)\longrightarrow +\infty\quad\text{as}\quad n\to\infty,
\]
if the Feller condition (F) holds.
\end{proof}
\begin{definition}\label{def:2.2}\quad A double sequence $\{X_{n, j}\}$ is said  to satisfy the Lindeberg condition if for any $\epsilon >0$ 
\begin{equation*}
\sum\limits_{j=1}^{k_{n}}\int\limits_{|x|>\epsilon}x^{2}dF_{n, j}(x)=o(1)\quad\text{as}\quad n\to\infty,\tag{L}
\end{equation*}
where $F_{n, j}(x):=P(X_{n, j}<x)$ denotes the distribution function of a random summand $X_{n, j}$ for $n\geq 1$ and $1\leq j\leq k_{n}.$
\end{definition}
\begin{proposition}\label{pro:2.4}\quad For any $\epsilon>0,$ the Feller condition (F) is followed from the Lindeberg condition (L), i.e., 
\[
(L)\Longrightarrow (F)\quad\text{as}\quad n\to\infty.
\]
\end{proposition}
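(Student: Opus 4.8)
The plan is to bound each individual variance $\sigma^{2}_{n, j}$ by splitting its defining second moment into a central part and a tail part, and then to control the \emph{maximum} over $j$ uniformly by the Lindeberg sum. First I would use the zero-mean assumption ({\bf B}) to rewrite each variance as a second moment, and then truncate at an arbitrary level $\epsilon>0$: for every fixed $n\geq 1$ and $1\leq j\leq k_{n}$,
\[
\sigma^{2}_{n, j}=\mathbb{D}X_{n, j}=\mathbb{E}X^{2}_{n, j}=\int_{\mathbb{R}}x^{2}\,dF_{n, j}(x)=\int_{|x|\leq\epsilon}x^{2}\,dF_{n, j}(x)+\int_{|x|>\epsilon}x^{2}\,dF_{n, j}(x).
\]

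Next I would estimate the two pieces separately. On the region $|x|\leq\epsilon$ the integrand satisfies $x^{2}\leq\epsilon^{2}$, so the central part is at most $\epsilon^{2}\int_{|x|\leq\epsilon}dF_{n, j}(x)\leq\epsilon^{2}$. For the tail part, the crucial observation is that all the summands in the Lindeberg sum are nonnegative, so the single tail integral carrying the index $j$ is dominated by the entire sum over $1\leq i\leq k_{n}$. Combining the two estimates gives, for every $j$,
\[
\sigma^{2}_{n, j}\leq\epsilon^{2}+\sum_{i=1}^{k_{n}}\int_{|x|>\epsilon}x^{2}\,dF_{n, i}(x),
\]
and since the right-hand side no longer depends on $j$, I may take $\max_{1\leq j\leq k_{n}}$ on the left without changing the bound.

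The final step is a limit argument in which the order of the quantifiers is essential. For each fixed $\epsilon>0$, applying the Lindeberg condition ({\bf L}) to the now $j$-free sum yields $\limsup_{n\to\infty}\max_{1\leq j\leq k_{n}}\sigma^{2}_{n, j}\leq\epsilon^{2}$. Because $\epsilon>0$ was arbitrary, letting $\epsilon\downarrow 0$ forces this $\limsup$ to equal zero, that is, $\max_{1\leq j\leq k_{n}}\sigma^{2}_{n, j}=o(1)$ as $n\to\infty$, which is exactly the Feller condition ({\bf F}).

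The main difficulty here is one of uniformity rather than of hard estimation: to pass from a bound on a \emph{single} variance to a bound on the maximum, one must replace the $j$-th tail integral by the full Lindeberg sum, which is legitimate only because these tail integrals are nonnegative. Everything else is the standard truncation splitting, and the two-stage limit ($\limsup$ in $n$ first, then $\epsilon\downarrow 0$) is precisely what converts the $\epsilon$-dependent inequality into the unconditional $o(1)$ statement.
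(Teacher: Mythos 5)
Your proof is correct and follows essentially the same route as the paper: truncate the second moment at level $\epsilon$, bound the central part by $\epsilon^{2}$, dominate the tail part by the full Lindeberg sum, and finish with the two-stage limit ($n\to\infty$ first, then $\epsilon\downarrow 0$), arriving at the same key inequality
\[
\max_{1\leq j\leq k_{n}}\sigma^{2}_{n, j}\leq \epsilon^{2}+\sum_{j=1}^{k_{n}}\int_{|x|\geq\epsilon}x^{2}\,dF_{n, j}(x).
\]
One point in your favor: you perform the splitting for each \emph{fixed} $j$, bound the central piece by $\epsilon^{2}$, and only then take the maximum, which is the clean way to reach the display above. The paper instead first bounds the maximum by the full sum $\sum_{j}\int_{\mathbb{R}}x^{2}\,dF_{n, j}(x)$ and then estimates the summed central parts by $\epsilon^{2}\sum_{j}\sigma^{2}_{n, j}$; that intermediate inequality is not valid in general (if every $|X_{n, j}|<\epsilon$ almost surely, its left-hand side equals $\sum_{j}\sigma^{2}_{n, j}=1$ while its right-hand side is $\epsilon^{2}$), so the paper's chain of estimates only becomes correct once it is reorganized exactly as you have done. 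Your explicit handling of the order of quantifiers ($\limsup$ in $n$ before letting $\epsilon\downarrow 0$) is also stated more carefully than in the paper.
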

\begin{proof}
By the expression $ \sum\limits_{j=1}^{k_{n}}\sigma^{2}_{n, j}=1,$ for any $\epsilon>0,$ we have 
\begin{equation}\label{equ:2.1}
\begin{split}
\max\limits_{1\leq j\leq k_{n}}\sigma^{2}_{n, j}&\leq \sum\limits_{j=1}^{k_{n}}\int\limits_{-\infty}^{\infty}x^{2}dF_{n, j}(x) \leq \sum\limits_{j=1}^{k_{n}}\int\limits_{|x|<\epsilon}x^{2}dF_{n, j}(x)  + \sum\limits_{j=1}^{k_{n}}\int\limits_{|x| \geq \epsilon}x^{2}dF_{n, j}(x)\\
&\leq \epsilon^{2}\sum\limits_{j=1}^{k_{n}}\sigma^{2}_{n, j}+ \sum\limits_{j=1}^{k_{n}}\int\limits_{|x| \geq \epsilon}x^{2}dF_{n, j}(x)\leq \epsilon^{2}+ \sum\limits_{j=1}^{k_{n}}\int\limits_{|x| \geq \epsilon}x^{2}dF_{n, j}(x).
\end{split}
\end{equation}
Applying the Lindeberg condition (L), from (\ref{equ:2.1}), the proof is finished  since $\epsilon>0$ is an arbitrary small.
\end{proof}
 Moreover, a condition is stronger than (L), as presented in the following definition.
 \begin{definition}\label{def:2.3}\quad A double sequence $\{X_{n, j}\}$ is said  to satisfy the Lyapunov's condition if
\begin{equation*}
\sum\limits_{j=1}^{k_{n}}\int\limits_{-\infty}^{+\infty}x^{2+\delta}dF_{n, j}(x)=o(1) \quad\text{as}\quad n\to\infty, \tag{$\Lambda$}
\end{equation*}
for $\delta\in (0, 1].$
\end{definition}
\begin{proposition}\label{pro:2.5}\quad The Lindeberg's condition (L) is deduced by the Lyapunov's condition ($\Lambda$), i.e.,
\[
(\Lambda)\Longrightarrow (L).
\] 
\end{proposition}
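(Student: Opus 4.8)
The plan is to exploit the fact that Lyapunov's condition ($\Lambda$) supplies control over a moment of order $2+\delta$ strictly larger than the order $2$ appearing in Lindeberg's integrand, and that this extra power is precisely what dominates the Lindeberg tail. I would fix an arbitrary $\epsilon>0$ and take $\delta\in(0,1]$ as in ($\Lambda$), and work entirely on the tail region $\{|x|>\epsilon\}$ where the integration in (L) takes place.

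The key elementary step is the pointwise bound on that region: whenever $|x|>\epsilon$ we have $|x|^{\delta}>\epsilon^{\delta}$, hence $1<\epsilon^{-\delta}|x|^{\delta}$, and multiplying through by $x^{2}$ gives
\[
x^{2}\leq \frac{1}{\epsilon^{\delta}}\,|x|^{2+\delta}\qquad\text{on}\quad\{|x|>\epsilon\}.
\]
Integrating this inequality against $dF_{n,j}(x)$ over $\{|x|>\epsilon\}$ and then enlarging the domain of integration to all of $\mathbb{R}$ (which only increases the integral, the integrand being nonnegative) yields
\[
\int\limits_{|x|>\epsilon}x^{2}\,dF_{n,j}(x)\leq \frac{1}{\epsilon^{\delta}}\int\limits_{-\infty}^{+\infty}|x|^{2+\delta}\,dF_{n,j}(x).
\]

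Summing over $1\leq j\leq k_{n}$ and pulling out the constant factor $\epsilon^{-\delta}$ gives
\[
\sum\limits_{j=1}^{k_{n}}\int\limits_{|x|>\epsilon}x^{2}\,dF_{n,j}(x)\leq \frac{1}{\epsilon^{\delta}}\sum\limits_{j=1}^{k_{n}}\int\limits_{-\infty}^{+\infty}|x|^{2+\delta}\,dF_{n,j}(x).
\]
Since $\epsilon$ is held fixed, the factor $\epsilon^{-\delta}$ is a constant, so ($\Lambda$) forces the right-hand side to be $o(1)$ as $n\to\infty$, and therefore the left-hand side is $o(1)$ as well; as $\epsilon>0$ was arbitrary, this is exactly the Lindeberg condition (L).

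I do not anticipate a genuine obstacle here: the whole argument rests on the single inequality $x^{2}\leq\epsilon^{-\delta}|x|^{2+\delta}$ valid on the tail, so the only point requiring a moment of care is bookkeeping—keeping $\epsilon$ fixed while the limit in $n$ is taken, so that $\epsilon^{-\delta}$ is treated as a constant and not allowed to interact with the $o(1)$. (I would also silently read the integrand in ($\Lambda$) as $|x|^{2+\delta}$ rather than $x^{2+\delta}$, so that it is the absolute moment and the nonnegativity used above is legitimate.)
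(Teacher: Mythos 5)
Your proof is correct and follows essentially the same route as the paper's: both rest on the single pointwise bound $x^{2}\leq \epsilon^{-\delta}|x|^{2+\delta}$ on the region $\{|x|>\epsilon\}$, followed by enlarging the domain of integration to all of $\mathbb{R}$ and invoking ($\Lambda$) with $\epsilon$ held fixed. Your parenthetical remark that the integrand in ($\Lambda$) should be read as the absolute moment $|x|^{2+\delta}$ is a sensible (and needed) reading of the paper's notation, which writes $x^{2+\delta}$ but clearly intends $|x|^{2+\delta}$.
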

\begin{proof}
 Indeed,  for  any $\epsilon >0,$ since $|X_{n, j}|\geq \epsilon$ implies $|X_{n, j}|^{\delta}\geq \epsilon ^{\delta},$ for $\delta\in (0,1].$\\
  Therefore
\begin{equation}\label{equ:2.2}
\sum\limits_{j=1}^{k_{n}}\int\limits_{|x|\geq\epsilon}x^{2}dF_{n, j}(x)\leq \frac{1}{\epsilon^{\delta}}\sum\limits_{j=1}^{k_{n}}\int\limits_{|x|\geq\epsilon}x^{2+\delta}dF_{n, j}(x)\leq \frac{1}{\epsilon^{\delta}}\sum\limits_{j=1}^{k_{n}}\int\limits_{-\infty}^{+\infty}x^{2+\delta}dF_{n, j}(x).
\end{equation}
Apply ($\Lambda$), from (\ref{equ:2.2}), the proof is complete.
\end{proof}
The preceding conditions, such as ($\Lambda$), (L), (F), and  (I), play an important role in the validity of the central limit theorems of the classical situation, which will be considered in the next parts.

\subsection{Classical central limit theorems} 
 The next statements are the central limit theorems for the double sequence $\{X_{n, j}\}$ in the classical version. The detailed proofs may be found in \cite{Rotar1996} and \cite{Shiryaev1996}. From now on, assume that the $\{X_{n, j}\}$ is a double sequence of independent in each row infinitesimal random variables $X_{n, j},$ such that the array conditions (A), (B), and (C) in the first section are suggested.
\begin{theorem}\label{the:2.1}(Lindeberg's theorem)\quad Suppose that a sequence $\{X_{n, j}\}$ satisfies the condition Lindeberg (L). Then the central limit theorem (CLT) is valid, i.e.,
\[
(L) \Longrightarrow (CLT).
\] 
\end{theorem}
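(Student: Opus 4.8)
The plan is to establish the convergence through characteristic functions and Lévy's continuity theorem. Writing $f_{n, j}(t)=\mathbb{E}e^{itX_{n, j}}$, the independence in each row (condition (A)) gives the product representation $\mathbb{E}\exp(itS_{n, k_{n}})=\prod_{j=1}^{k_{n}}f_{n, j}(t)$, so the task reduces to proving the pointwise convergence $\prod_{j=1}^{k_{n}}f_{n, j}(t)\to e^{-t^{2}/2}$ for every fixed $t\in\mathbb{R}$.

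First I would expand each factor. Using the zero-mean condition (B) together with the elementary estimate
\[
\left| e^{iu}-1-iu+\tfrac{u^{2}}{2} \right|\leq \min\left(\tfrac{|u|^{3}}{6},\, u^{2}\right),
\]
and taking expectations, I obtain $f_{n, j}(t)-1=-\tfrac{t^{2}}{2}\sigma^{2}_{n, j}+r_{n, j}(t)$, where the remainder is controlled by splitting the defining integral at $|x|=\epsilon$: on $|x|\leq\epsilon$ the cubic part of the bound contributes at most $\tfrac{|t|^{3}\epsilon}{6}\sigma^{2}_{n, j}$, while on $|x|>\epsilon$ the quadratic part contributes at most $t^{2}\int_{|x|>\epsilon}x^{2}\,dF_{n, j}(x)$. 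Summing over $j$ and using condition (C) yields
\[
\sum_{j=1}^{k_{n}}|r_{n, j}(t)|\leq \frac{|t|^{3}\epsilon}{6}+t^{2}\sum_{j=1}^{k_{n}}\int_{|x|>\epsilon}x^{2}\,dF_{n, j}(x).
\]
Letting $n\to\infty$ and invoking the Lindeberg condition (L) annihilates the tail sum, after which letting $\epsilon\downarrow0$ drives the whole bound to zero; hence $\sum_{j}(f_{n, j}(t)-1)\to -t^{2}/2$.

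The crucial step is then to transfer this from the sum to the product. By Proposition~\ref{pro:2.4}, condition (L) forces the Feller condition (F), which by Proposition~\ref{pro:2.1} guarantees the uniform smallness $\max_{1\leq j\leq k_{n}}|f_{n, j}(t)-1|=o(1)$. With this at hand I would apply the standard factorization lemma: for complex numbers $z_{n, j}$ with $\sum_{j}|z_{n, j}|$ bounded and $\max_{j}|z_{n, j}|\to0$ one has $\prod_{j}(1+z_{n, j})-\exp\!\big(\sum_{j}z_{n, j}\big)\to0$, so that $\prod_{j}f_{n, j}(t)$ and $\exp\!\big(\sum_{j}(f_{n, j}(t)-1)\big)$ share the limit $e^{-t^{2}/2}$. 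I expect this linearization to be the main technical obstacle, since its justification rests on the quadratic remainder estimate $\sum_{j}|f_{n, j}(t)-1|^{2}\leq\big(\max_{j}|f_{n, j}(t)-1|\big)\sum_{j}|f_{n, j}(t)-1|\to0$, where the first factor vanishes by the infinitesimality just derived and the second stays bounded; getting these two controls to cooperate uniformly in $j$ is exactly where Lindeberg's hypothesis does its real work.

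Finally, Lévy's continuity theorem converts the pointwise convergence of characteristic functions into the weak convergence $S_{n, k_{n}}\stackrel{D}{\longrightarrow}\mathcal{N}(0,1)$. Because the limiting distribution function $\Phi_{0,1}$ is everywhere continuous, this weak convergence upgrades to convergence uniform in $x$ (Pólya's theorem), which is precisely the statement $\Delta_{n, k_{n}}=o(1)$, i.e. the central limit theorem (CLT).
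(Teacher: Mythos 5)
Your proof is correct, but note that the paper itself contains no argument for Theorem~\ref{the:2.1}: it explicitly defers to the textbooks \cite{Rotar1996} and \cite{Shiryaev1996}, so there is no internal proof to compare against, and what you have written is essentially the canonical characteristic-function proof that those references contain. Each of your steps is sound: the product representation from row-wise independence, the expansion $f_{n,j}(t)-1=-\tfrac{t^{2}}{2}\sigma^{2}_{n,j}+r_{n,j}(t)$ with the split at $|x|=\epsilon$ controlled by $\min\big(\tfrac{|u|^{3}}{6},u^{2}\big)$, the transfer from $\sum_{j}(f_{n,j}(t)-1)\to-\tfrac{t^{2}}{2}$ to $\prod_{j}f_{n,j}(t)\to e^{-t^{2}/2}$ via the factorization lemma, and the final appeal to L\'evy continuity plus P\'olya's theorem (the limit $\Phi_{0,1}$ being continuous, weak convergence upgrades to the uniform statement $\Delta_{n,k_{n}}=o(1)$). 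Two small points deserve an explicit line: (i) the boundedness of $\sum_{j}|f_{n,j}(t)-1|$, which your factorization lemma requires, follows from the mean-zero estimate $|f_{n,j}(t)-1|\leq\tfrac{t^{2}}{2}\sigma^{2}_{n,j}$ together with condition (C), giving the uniform bound $t^{2}/2$; (ii) the uniform smallness $\max_{1\leq j\leq k_{n}}|f_{n,j}(t)-1|\to0$ can be read off the same estimate combined with (L)$\Rightarrow$(F) (Proposition~\ref{pro:2.4}), which is shorter than routing through Proposition~\ref{pro:2.1} and condition (I), though your route is also valid. With those one-line justifications inserted, your argument is a complete and self-contained proof of the theorem, which is more than the paper provides.
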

From Proposition (\ref{pro:2.5}),   we have the following conclusion:
\begin{theorem}\label{the:2.2}(Lyapunov's theorem)\quad The sequence $\{X_{n, j}\}$ obeys the (CLT) if the condition ($\Lambda$) holds. It means that 
\[
(\Lambda)  \Longrightarrow (CLT).
\] 
\end{theorem}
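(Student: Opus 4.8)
The plan is to obtain this implication essentially for free, by chaining the two results that immediately precede it. Since Proposition~\ref{pro:2.5} already establishes $(\Lambda)\Longrightarrow(L)$, and Theorem~\ref{the:2.1} (Lindeberg's theorem) establishes $(L)\Longrightarrow(CLT)$, the desired conclusion $(\Lambda)\Longrightarrow(CLT)$ follows at once by transitivity. Concretely, I would assume that the Lyapunov condition $(\Lambda)$ holds for some $\delta\in(0,1]$, apply Proposition~\ref{pro:2.5} to deduce that the Lindeberg condition $(L)$ is satisfied, and then invoke Theorem~\ref{the:2.1} to conclude that $\Delta_{n,k_{n}}=o(1)$ as $n\to\infty$, which is precisely the assertion $(CLT)$.

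There is essentially no technical obstacle here: the entire content has been front-loaded into the proof of Proposition~\ref{pro:2.5}, where the elementary pointwise observation that $|X_{n,j}|\geq\epsilon$ forces $|X_{n,j}|^{\delta}\geq\epsilon^{\delta}$ converts a truncated second moment into a globally integrated absolute moment of order $2+\delta$, and then into the characteristic-function argument underlying Lindeberg's theorem itself. The only point worth a moment's care is that the range $\delta\in(0,1]$ appearing in the statement of $(\Lambda)$ is exactly the range fed into Proposition~\ref{pro:2.5}; since both definitions share that range, the matching is automatic and nothing further need be verified.

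If one wished to avoid citing Lindeberg's theorem as a black box, the alternative would be to reprove the characteristic-function estimate directly under $(\Lambda)$, expanding each $f_{n,j}(t)$ about $1-\tfrac{1}{2}\sigma_{n,j}^{2}t^{2}$ and controlling the remainder via the $(2+\delta)$-moments through the elementary bound $\bigl|e^{iu}-1-iu+\tfrac{1}{2}u^{2}\bigr|\leq C\,|u|^{2+\delta}$. This route, however, merely duplicates the work already carried out in Proposition~\ref{pro:2.5} and Theorem~\ref{the:2.1}, so the transitivity argument is the natural and economical choice, and it is the one I would present.
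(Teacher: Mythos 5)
Your proposal is correct and matches the paper's own reasoning exactly: the paper derives Theorem~\ref{the:2.2} precisely by combining Proposition~\ref{pro:2.5} ($(\Lambda)\Longrightarrow(L)$) with Theorem~\ref{the:2.1} ($(L)\Longrightarrow(CLT)$), introducing the result with ``From Proposition (\ref{pro:2.5}), we have the following conclusion.'' Your optional direct characteristic-function route is unnecessary, as you note, and the transitivity argument you present is the one the paper intends.
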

From \cite{Feller1971} (Chapter XV, Theorem 2, Page 520),  the  following statement states that the Lindeberg condition is necessary condition for the central limit theorem (CLT) under the Feller condition (F).
\begin{theorem}\label{the:2.3}(Feller's theorem)\quad Suppose that the sequence $\{X_{n, j}\}$ obeys the central limit theorem (CLT) under the Feller condition (F). Then the Lindeberg condition is valid, i.e., 
\[
(CLT)\quad \& \quad (F) \Longrightarrow (L).
\]
\end{theorem}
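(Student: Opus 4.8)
The plan is to argue entirely through characteristic functions, exploiting that (CLT) is equivalent to the convergence of the products $\prod_{j=1}^{k_n} f_{n,j}(t) \to e^{-t^2/2}$ for every $t \in \mathbb{R}$ (by L\'evy's continuity theorem), and then to read off (L) from the real part of the limiting relation $\sum_{j}(f_{n,j}(t)-1) \to -t^2/2$. First I would record two consequences of the hypothesis (F). By Proposition \ref{pro:2.2}, (F) implies (I), and hence by Proposition \ref{pro:2.1}(B), $\max_{1\le j\le k_n}|f_{n,j}(t)-1| = o(1)$ for each fixed $t$; this uniform closeness of the factors to $1$ is what makes the linearization below legitimate. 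Since $\mathbb{E}X_{n,j}=0$, the elementary bound $|e^{iu}-1-iu|\le u^2/2$ gives
\[
|f_{n,j}(t)-1| = \Big|\mathbb{E}\big(e^{itX_{n,j}}-1-itX_{n,j}\big)\Big| \le \tfrac{t^2}{2}\,\sigma_{n,j}^2,
\]
so that, using (C),
\[
\sum_{j=1}^{k_n}|f_{n,j}(t)-1|^2 \le \Big(\max_{1\le j\le k_n}|f_{n,j}(t)-1|\Big)\cdot\frac{t^2}{2}\sum_{j=1}^{k_n}\sigma_{n,j}^2 = \frac{t^2}{2}\max_{1\le j\le k_n}|f_{n,j}(t)-1| = o(1).
\]

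Next I would pass from the product to the sum. Because $|f_{n,j}(t)|\le 1$ one has $\mathrm{Re}\,(f_{n,j}(t)-1)\le 0$, hence $|e^{f_{n,j}(t)-1}|\le 1$; combining the telescoping estimate $|\prod z_j - \prod w_j|\le \sum|z_j-w_j|$ (valid when all $|z_j|,|w_j|\le 1$) with $w_j := e^{f_{n,j}(t)-1}$ and the bound $|z-e^{z-1}|\le C|z-1|^2$ for $z$ near $1$, I obtain
\[
\Big|\prod_{j=1}^{k_n} f_{n,j}(t) - \exp\Big(\sum_{j=1}^{k_n}(f_{n,j}(t)-1)\Big)\Big| \le C\sum_{j=1}^{k_n}|f_{n,j}(t)-1|^2 = o(1).
\]
Taking moduli and using (CLT) in the form $\big|\prod_j f_{n,j}(t)\big|\to e^{-t^2/2}$ then forces $\exp\big(\mathrm{Re}\sum_j(f_{n,j}(t)-1)\big)\to e^{-t^2/2}$, i.e.
\[
\sum_{j=1}^{k_n}\int_{-\infty}^{\infty}\big(\cos tx - 1\big)\,dF_{n,j}(x) \longrightarrow -\frac{t^2}{2},\qquad n\to\infty.
\]

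Finally I would extract the Lindeberg condition. Since $\sum_j\int \frac{t^2x^2}{2}\,dF_{n,j}(x) = \frac{t^2}{2}\sum_j\sigma_{n,j}^2 = \frac{t^2}{2}$ by (C), subtracting yields
\[
\sum_{j=1}^{k_n}\int_{-\infty}^{\infty}\Big(\cos tx - 1 + \frac{t^2x^2}{2}\Big)\,dF_{n,j}(x) \longrightarrow 0,
\]
a sum whose integrand is nonnegative because $\cos u \ge 1-\tfrac{u^2}{2}$. Dropping the contribution of $|x|\le\epsilon$ only decreases it, while on $|x|>\epsilon$ the elementary estimate $\cos tx - 1 + \frac{t^2x^2}{2}\ge \frac{t^2x^2}{2}-2 \ge \frac{t^2x^2}{4}$ holds once $t$ is chosen (depending on $\epsilon$) so large that $t^2\epsilon^2\ge 8$. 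Hence
\[
\frac{t^2}{4}\sum_{j=1}^{k_n}\int_{|x|>\epsilon}x^2\,dF_{n,j}(x) \le \sum_{j=1}^{k_n}\int_{-\infty}^{\infty}\Big(\cos tx-1+\frac{t^2x^2}{2}\Big)dF_{n,j}(x) \longrightarrow 0,
\]
and dividing by the fixed constant $t^2/4$ gives exactly (L). The main obstacle is the middle step: justifying the linearization $\prod f_{n,j}\approx\exp(\sum(f_{n,j}-1))$ and the resulting convergence of $\mathrm{Re}\sum_j(f_{n,j}(t)-1)$ — this is precisely where infinitesimality coming from (F) and the quadratic remainder control $\sum|f_{n,j}-1|^2=o(1)$ are indispensable; everything before it is bookkeeping and everything after it is a fixed-$t$ real-variable inequality.
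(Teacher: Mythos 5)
Your proof is correct and complete: every step checks out, using only $\mathbb{E}X_{n,j}=0$, the normalization (C), the Feller condition (F), and L\'evy's continuity theorem to translate (CLT) into $\prod_{j=1}^{k_n} f_{n,j}(t)\to e^{-t^{2}/2}$. It is also essentially the same approach as the paper's, in the sense that the paper offers no proof of Theorem \ref{the:2.3} at all but delegates it to Feller (1971), Ch.~XV, Theorem 2, and the classical argument given there is precisely your route: compare $\prod_{j} f_{n,j}(t)$ with $\exp\big(\sum_{j}(f_{n,j}(t)-1)\big)$ via the quadratic control $\sum_{j}|f_{n,j}(t)-1|^{2}=o(1)$ supplied by (F), pass to moduli to obtain $\sum_{j}\int(\cos tx-1)\,dF_{n,j}(x)\to -t^{2}/2$, and then exploit the nonnegativity of $\cos u-1+u^{2}/2$ together with a fixed $t$ chosen so large that $t^{2}\epsilon^{2}\ge 8$ to extract the truncated second moments in (L).
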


\subsection{Non-classical central limit theorems}
Following Zolotarev \cite{Zolotarev1997}, the limit theorems that make no use of the condition of asymptotic infinitesimality (I) are said to be non-classical. The preceding listed central limit theorems, like Theorems \ref{the:2.1}, \ref{the:2.2}, and \ref{the:2.3},  are classical.  As was noted in some monographs containing the main results of the  non-classical theory of summation of independent random variables, the ideas underlying the non-classical approach go back to P. L{\'e}vy, who studied various versions of the central limit theorem without the assumption of  condition of asymptotic infinitesimality of the individual summands $X_{n, j}$ for $ngeq 1, 1\leq j\leq k_{n}.$ A simple example introduced by Shiryaev (\cite{Shiryaev1996}, Page 337) shows that a double sequence $\{X_{n, j}\}$ is formed by the sequence of independent (in each row) normally distributed random variables $(X_{j}, 1\leq j\leq k_{n}),$ using the "series form" $X_{n, j}=\frac{X_{j}}{B_{n}},$  for which 
\[
\mathbb{E} X_{j}=0, \mathbb{D}X_{1}=1, \mathbb{D}X_{j}=2^{j-2}, (2\leq j\leq k_{n});\quad\text{and}\quad  B^{2}_{n}=\sum\limits_{j=1}^{k_{n}}\mathbb{D}X_{j}
\]
do not use the conditions for the central limit theorems in classical version like Lindeberg's, Feller's,  and (I) conditions but the sequence $\{X_{n, j}\}$ still satisfies the central limit theorem. Indeed, for the original sequence $(X_{j}, 1\leq j\leq k_{n})$ let us set $W_{n}=\sum\limits_{j=1}^{k_{n}}X_{j},$ where $X_{j}\stackrel{D}{\sim}\mathcal{N}(0, 2^{j-2})$ for $2\leq j\leq k_{n}.$  For $k_{n}\geq 2, $ it is easily to check that  
\[
\mathbb{E} W_{n}=\sum\limits_{j=1}^{k_{n}}\mathbb{E} X_{j}=0, \quad\text{and}\quad \mathbb{D} W_{k_{n}}=B^{2}_{n}=\sum\limits_{j=1}^{k_{n}}\mathbb{D}X_{j}=1+1+\sum\limits_{j=3}^{k_{n}}\mathbb{D}X_{j}=2^{k_{n}-1} (\ne 1).
\]
Therefore, $W_{n}\stackrel{D}{\sim}\mathcal{N}(0, 2^{k_{n}-1})$ for $k_{n}\geq 2.$ For the double sequence $\{X_{n, j}\},$ we still use the same symbol as before $S_{n}=\sum\limits_{j=1}^{k_{n}}X_{n, j}, $ where $X_{n, j}=\frac{X_{j}}{B_{n}}, (1\leq j\leq k_{n}).$ We see  at once that, for $k_{n}\geq 1$ 
\[
\mathbb{E} S_{n}= \frac{1}{B_{n}}\mathbb{E} {W}_{k_{n}}=0, \quad \mathbb{D}S_{n}=  \frac{1}{B_{n}}\mathbb{D} {W}_{n}=\frac{1}{2^{k_{n}-1}}2^{k_{n}-1}=1.
\]
It is obvious that $S_{n}\stackrel{D}{\sim}\mathcal{N}(0, 1)$ for $k_{n}\geq 1.$ And this means that the central limit theorem for the double sequence $\{X_{n, j}\}$ is valid.\\
 However, it is easy to check that the double sequence $\{X_{n, j}\}$ does not satisfy any of the conditions (L), (F), and (I). Indeed, (F) is not satisfied since
\[
\max\limits_{1\leq j\leq k_{n}}\sigma^{2}_{n, j}=\max\limits_{1\leq j\leq k_{n}}\frac{\sigma^{2}_{j}}{B^{2}_{n}}=\frac{2^{k_{n}-2}}{2^{k_{n}-1}}=\frac{1}{2}\nrightarrow 0 \quad\text{as}\quad n\to\infty.
\] 
Thus, the conditions (L) and (I) are also not satisfied for the double sequence $\{X_{n, j}\}$ which means that the central limit theorem in the classical situation is not satisfied. But it is true for the central limit theorem in its non-classical situation. In nature, the sequence in the above example meets Rotar's condition below.
\begin{definition}\label{def:2.4}
 A sequence of series $\{X_{n, j}\}$ is said to satisfy the Rotar condition if for any $\epsilon >0$
\begin{equation*}
\sum\limits_{j=1}^{k_{n}}\int\limits_{|x|>\epsilon}|x||F_{n, j}(x)-\Phi_{n, j}(x)|dx=o(1)\quad\text{as}\quad n\to\infty,\tag{R}
\end{equation*}
here $\Phi_{n, j}(x)$ denotes the normal distribution of a normal random variable $X^{*}_{n, j}\stackrel{D}{\sim}\mathcal{N}(0, \sigma^{2}_{n, j}).$
\end{definition}
\begin{remark}\label{rem:2.1}\quad  It is evident that
\[
X^{*}_{n, j}\stackrel{D}{=}\sigma_{n, j}X^{*},
\]
and
\[
\Phi_{n, j}(x)=P(X^{*}_{n, j}<x)=P(\sigma_{n, j}X^{*}< x)=P(X^{*}< x/\sigma_{n, j})=\Phi_{0,1} (x/\sigma_{n, j}),
\]
for  $1\leq j\leq k_{n}, n\geq 1.$
\end{remark}
The following result  concludes that the Rotar's condition (R) is weaker than the Lindeberg's condition (L). 
\begin{proposition}\label{pro:2.6}\quad For any $\epsilon >0,$ the following implication
\[
(L)\Longrightarrow (R)
\]
holds.
\end{proposition}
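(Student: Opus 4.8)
The plan is to control the Rotar integrand $|x|\,|F_{n,j}(x)-\Phi_{n,j}(x)|$ on the tail region $|x|>\epsilon$ by quantities that the Lindeberg condition already governs, namely truncated second moments. The starting point is an elementary triangle-inequality splitting: for $x>0$ both $F_{n,j}(x)$ and $\Phi_{n,j}(x)$ lie in $[0,1]$, so
\[
|F_{n,j}(x)-\Phi_{n,j}(x)|\le \big(1-F_{n,j}(x)\big)+\big(1-\Phi_{n,j}(x)\big),
\]
while for $x<0$ one has $|F_{n,j}(x)-\Phi_{n,j}(x)|\le F_{n,j}(x)+\Phi_{n,j}(x)$. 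Thus the task reduces to estimating tail integrals of the form $\int_{\epsilon}^{\infty}x\,(1-F_{n,j}(x))\,dx$ and $\int_{-\infty}^{-\epsilon}(-x)\,F_{n,j}(x)\,dx$, together with their Gaussian counterparts in which $F_{n,j}$ is replaced by $\Phi_{n,j}$.

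Next I would convert each such tail integral into a truncated second moment via Fubini's theorem. Interchanging the order of integration gives
\[
\int_{\epsilon}^{\infty}x\,\big(1-F_{n,j}(x)\big)\,dx=\int_{(\epsilon,\infty)}\frac{y^{2}-\epsilon^{2}}{2}\,dF_{n,j}(y)\le \frac12\int_{|x|>\epsilon}x^{2}\,dF_{n,j}(x),
\]
and symmetrically for the left tail, so that summing the two contributions yields
\[
\int_{|x|>\epsilon}|x|\,|F_{n,j}(x)-\Phi_{n,j}(x)|\,dx\le \frac12\int_{|x|>\epsilon}x^{2}\,dF_{n,j}(x)+\frac12\int_{|x|>\epsilon}x^{2}\,d\Phi_{n,j}(x).
\]
After summing over $1\le j\le k_{n}$, the first term on the right is exactly half of the Lindeberg sum and is therefore $o(1)$ by hypothesis.

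The crux of the argument is the Gaussian term $\sum_{j}\int_{|x|>\epsilon}x^{2}\,d\Phi_{n,j}(x)$, which is where I expect the main obstacle to lie, since a priori there is no Lindeberg control over the normal laws $\Phi_{n,j}$. Here I would invoke Proposition~\ref{pro:2.4}, by which $(L)\Rightarrow(F)$, to conclude that $M_{n}:=\max_{1\le j\le k_{n}}\sigma^{2}_{n,j}\to 0$. Substituting $x=\sigma_{n,j}u$ and writing $\phi$ for the standard normal density gives
\[
\int_{|x|>\epsilon}x^{2}\,d\Phi_{n,j}(x)=\sigma^{2}_{n,j}\int_{|u|>\epsilon/\sigma_{n,j}}u^{2}\phi(u)\,du\le \sigma^{2}_{n,j}\,\rho_{n},\qquad \rho_{n}:=\int_{|u|>\epsilon/\sqrt{M_{n}}}u^{2}\phi(u)\,du,
\]
because $\sigma_{n,j}\le\sqrt{M_{n}}$ only enlarges the truncation level. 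Since $\int_{\mathbb{R}}u^{2}\phi(u)\,du=1<\infty$, the tail $\rho_{n}\to 0$ as $M_{n}\to 0$. Finally, using the normalization $\sum_{j}\sigma^{2}_{n,j}=1$ from condition (C), I obtain $\sum_{j}\int_{|x|>\epsilon}x^{2}\,d\Phi_{n,j}(x)\le \rho_{n}\to 0$. Combining this with the Lindeberg bound on the $F_{n,j}$ term establishes $(R)$, which completes the proof.
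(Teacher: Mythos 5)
Your proof is correct, but it takes a genuinely different route from the paper's. The paper's argument is essentially a one-liner resting on an external estimate cited from \cite{Formanov2019} (Theorem 1, Formula (3)): the Rotar sum is bounded by $C(\epsilon)\big(\sum_{j}\int_{|x|>\epsilon}x^{2}dF_{n,j}(x)+\sum_{j}\sigma^{2s}_{n,j}\big)$; taking $s=2$, using $\sum_{j}\sigma^{4}_{n,j}\le\max_{j}\sigma^{2}_{n,j}$ (valid since $\sum_{j}\sigma^{2}_{n,j}=1$), and then invoking $(L)\Rightarrow(F)$ finishes the proof. You instead prove the needed bound from scratch: the triangle-inequality splitting of $|F_{n,j}-\Phi_{n,j}|$ into tails of $F_{n,j}$ and of $\Phi_{n,j}$, the Fubini conversion of the weighted tail integrals into truncated second moments (with the explicit constant $\tfrac12$), and direct control of the Gaussian Lindeberg sum via the substitution $x=\sigma_{n,j}u$ together with $\sigma_{n,j}\le\sqrt{M_{n}}$ and Proposition~\ref{pro:2.4}. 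Structurally the two arguments are parallel --- both dominate the Rotar sum by the Lindeberg sum plus a term that the Feller condition kills --- but yours is self-contained and gives explicit constants, whereas the paper outsources the key inequality to a cited result whose proof is not reproduced; the price is that your argument is longer. One minor point worth noting: the substitution $x=\sigma_{n,j}u$ presupposes $\sigma_{n,j}>0$; if $\sigma_{n,j}=0$ then $\Phi_{n,j}$ is a point mass at the origin and the corresponding Gaussian term vanishes, so the estimate still holds.
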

\begin{proof}
From \cite{Formanov2019} (Theorem 1, Formula (3), Page 32),  we have  the following estimation
\[
\sum\limits_{j=1}^{k_{n}}\int\limits_{|x|>\epsilon}|x||F_{n, j}(x)-\Phi_{n, j}(x)|dx
\leq C\bigg(\sum\limits_{j=1}^{k_{n}}\int\limits_{|x|> \epsilon}x^{2}dF_{n, j}(x)+\sum\limits_{j=1}^{k_{n}}\sigma^{2s}_{n, j}\bigg), 
\]
for some $C=C(\epsilon)>0$ and for any $\epsilon>0.$ With $s= 2,$ from above inequality, it follows that
\begin{equation}\label{equ:2.3}
\sum\limits_{j=1}^{k_{n}}\int\limits_{|x|>\epsilon}|x||F_{n, j}(x)-\Phi_{n, j}(x)|dx
\leq C\bigg(\sum\limits_{j=1}^{k_{n}}\int\limits_{|x|> \epsilon}x^{2}dF_{n, j}(x)+\sup\limits_{1\leq j \leq k_{n}}\sigma^{2}_{n, j}\bigg).
\end{equation}
It should be noted that, to obtain the estimate (\ref{equ:2.3}),  we have used the fact that 
\[
\sum\limits_{j=1}^{k_{n}}\sigma^{4}_{n, j}\leq \sum\limits_{j=1}^{k_{n}}\sigma^{2}_{n, j}\bigg( \max\limits_{1\leq j\leq k_{n}}\sigma^{2}_{n, j}\bigg)=\max\limits_{1\leq j\leq k_{n}}\sigma^{2}_{n, j},
\]
 since $\sum\limits_{j=1}^{k_{n}}\sigma^{2}_{n, j}=1.$ Finally, from (\ref{equ:2.3}), the Rotar's condition (R) is deduced by the Lindeberg's condition (L) when $n\to\infty.$ Meanwhile, $\sup\limits_{1\leq j\leq k_{n}}\sigma^{2}_{n, j}=o(1)$ as $n\to\infty$ since (L) holds. The proof is complete.
\end{proof}
 The following Rotar's theorem shows that  the Rotar's condition (R) is sufficient and necessary for the validity of the central limit theorem (CLT).
\begin{theorem}\label{the:2.4}(Rotar's theorem)\quad To have 
\[
\Delta_{n, k_{n}}=o(1)\quad\text{as}\quad n\to\infty,
\]
it is sufficient (and necessary) that for every $\epsilon >0$ the condition (R) is satisfied.
\end{theorem}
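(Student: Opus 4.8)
The plan is to establish both implications through characteristic functions, reserving the converse (necessity) as the genuinely hard half. Write $f_{n,j}(t)=\mathbb{E}e^{itX_{n,j}}$ and let $g_{n,j}(t)=e^{-\sigma^{2}_{n,j}t^{2}/2}$ be the characteristic function of $X^{*}_{n,j}\stackrel{D}{\sim}\mathcal{N}(0,\sigma^{2}_{n,j})$. First I would reduce the assertion $\Delta_{n,k_{n}}=o(1)$ to pointwise convergence of characteristic functions: by L\'evy's continuity theorem the (CLT) is equivalent to $\prod_{j=1}^{k_{n}}f_{n,j}(t)\to e^{-t^{2}/2}$ for every $t\in\mathbb{R}$, and since the standard normal law has continuous distribution function $\Phi_{0,1}$, P\'olya's theorem upgrades the resulting weak convergence to the uniform convergence measured by $\Delta_{n,k_{n}}$. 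Because $\sum_{j}\sigma^{2}_{n,j}=1$ by condition (C), the Gaussian limit factorizes as $e^{-t^{2}/2}=\prod_{j}g_{n,j}(t)$, so the whole question becomes the smallness of $\big|\prod_{j}f_{n,j}(t)-\prod_{j}g_{n,j}(t)\big|$.

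For the sufficiency direction $(R)\Rightarrow(CLT)$, the next step is the elementary product inequality: since $|f_{n,j}(t)|\le1$ and $|g_{n,j}(t)|\le1$, one has $\big|\prod_{j}f_{n,j}-\prod_{j}g_{n,j}\big|\le\sum_{j}|f_{n,j}(t)-g_{n,j}(t)|$, reducing everything to a per-summand estimate. Here I would exploit that $F_{n,j}$ and $\Phi_{n,j}$ share their moments of orders $0,1,2$ (both have total mass $1$, mean $0$, and second moment $\sigma^{2}_{n,j}$). Writing $H_{n,j}:=F_{n,j}-\Phi_{n,j}$ and integrating by parts, this moment matching yields the representation
\[
f_{n,j}(t)-g_{n,j}(t)=-it\int_{-\infty}^{+\infty}\big(e^{itx}-1-itx\big)H_{n,j}(x)\,dx,
\]
which is the crucial device: it already carries the difference $F_{n,j}-\Phi_{n,j}$ rather than $F_{n,j}$ and $\Phi_{n,j}$ separately, so that no Lindeberg-type tail integral $\int_{|x|>\epsilon}x^{2}dF_{n,j}$ is forced to appear.

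The estimate then splits at $|x|=\epsilon$ using $|e^{itx}-1-itx|\le\min\{t^{2}x^{2}/2,\,2|t||x|\}$. On the tail $|x|>\epsilon$ the linear bound produces precisely $2t^{2}\int_{|x|>\epsilon}|x|\,|F_{n,j}(x)-\Phi_{n,j}(x)|\,dx$, whose sum over $j$ is $o(1)$ by the Rotar condition (R). On the central region $|x|\le\epsilon$ the quadratic bound gives $\tfrac{|t|^{3}}{2}\int_{|x|\le\epsilon}x^{2}|H_{n,j}(x)|\,dx$, and here I would use the tail comparison $|H_{n,j}(x)|\le P(|X_{n,j}|\ge|x|)+P(|X^{*}_{n,j}|\ge|x|)$ together with the identity $\int_{0}^{\infty}x\,P(|X|\ge x)\,dx=\tfrac12\mathbb{E}X^{2}$ to bound this central integral by $C\epsilon\,\sigma^{2}_{n,j}$. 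Summing, the central contribution is at most $C|t|^{3}\epsilon\sum_{j}\sigma^{2}_{n,j}=C|t|^{3}\epsilon$; letting $n\to\infty$ first and then $\epsilon\to0$ finishes sufficiency. The only delicate points in this half are justifying the integration by parts (vanishing of the boundary terms, which follows from finiteness of the variances via $x^{2}P(|X|>x)\to0$) and the central variance bound just described.

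The main obstacle is the necessity direction $(CLT)\Rightarrow(R)$, which is the deep content of Rotar's theorem. The difficulty is that there is no elementary lower estimate turning closeness of the characteristic functions back into smallness of the weighted integral $\sum_{j}\int_{|x|>\epsilon}|x||F_{n,j}-\Phi_{n,j}|\,dx$; moreover, since we work in the non-classical setting, the condition of asymptotic infinitesimality (I) is \emph{not} assumed, so one cannot fall back on the L\'evy--Khinchin description of infinitely divisible limits. I would follow Rotar's original argument \cite{Rotar1975} (see also \cite{Formanov2019}), realizing the Rotar functional as, essentially, an ideal Zolotarev-type distance that admits a two-sided comparison with the characteristic-function distance; the (CLT) then forces this ideal distance, and hence the integral in (R), to vanish. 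This inversion, rather than the direct half, is where the real work lies.
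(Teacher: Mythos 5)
The paper itself gives no proof of Theorem \ref{the:2.4}: it is quoted as a known result, with the reader referred to \cite{Rotar1975}, \cite{Formanov2019}, \cite{Shiryaev1996}, etc. So the only internal material to compare against is the proof of the randomized analogue, Theorem \ref{the:5.4}. Your sufficiency argument is correct and is in substance identical to the route the paper relies on there: the representation $f_{n,j}(t)-g_{n,j}(t)=-it\int(e^{itx}-1-itx)\bigl(F_{n,j}(x)-\Phi_{n,j}(x)\bigr)dx$ (legitimate here because $F_{n,j}$ and $\Phi_{n,j}$ have matching zeroth, first and second moments, and the boundary terms vanish since $x^{2}P(|X|>x)\to 0$ under finite variance), split at $|x|=\epsilon$ with the bounds $\min\{t^{2}x^{2}/2,\,2|t||x|\}$, yields exactly Shiryaev's inequality
\[
\Bigl|\mathbb{E}e^{itS_{n,k_n}}-e^{-t^{2}/2}\Bigr|\leq C|t|^{3}\epsilon+2t^{2}\sum_{j=1}^{k_n}\int_{|x|>\epsilon}|x|\,|F_{n,j}(x)-\Phi_{n,j}(x)|\,dx,
\]
which is precisely formula (\ref{equ:5.10}) that the paper imports from \cite{Shiryaev1996} to prove part ({\bf A}) of Theorem \ref{the:5.4}. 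Your per-summand central bound $\int_{|x|\le\epsilon}x^{2}|F_{n,j}-\Phi_{n,j}|\,dx\le C\epsilon\sigma^{2}_{n,j}$ and the product inequality are both sound, and the order of limits ($n\to\infty$, then $\epsilon\to 0$) closes that half correctly. In effect you re-derive, rather than cite, the key inequality; that is a genuine plus.

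The gap is the necessity direction, which you explicitly do not prove but only gesture at. The statement you were asked to prove includes ``(and necessary),'' so a complete attempt must supply this inversion, and your sketch --- that the Rotar functional is ``essentially an ideal Zolotarev-type distance admitting a two-sided comparison with the characteristic-function distance'' --- is not a proof and, as far as the sources this paper uses go, not an accurate description of how necessity is actually obtained. The accessible route in this line of work is Formanov's inequality (used by the paper for part ({\bf B}) of Theorem \ref{the:5.4}),
\[
\sum_{j=1}^{k_n}\int_{|x|>\epsilon}|x|\,|F_{n,j}(x)-\Phi_{n,j}(x)|\,dx\leq C(\epsilon)\Bigl(\Delta_{n,k_n}+\sum_{j=1}^{k_n}\sigma^{4}_{n,j}\Bigr),
\]
which delivers necessity only \emph{under the Feller condition} (so that $\sum_{j}\sigma^{4}_{n,j}\le\max_{j}\sigma^{2}_{n,j}\to 0$); the unconditional necessity asserted in Theorem \ref{the:2.4} requires Rotar's original, substantially harder argument. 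Note also that any two-sided comparison of the naive kind you suggest cannot be unconditional: in Shiryaev's Gaussian example (Section 2.3 of the paper) the Rotar sum is identically zero while $\max_{j}\sigma^{2}_{n,j}=1/2$, so the quantity controlling necessity cannot be a symmetric metric-to-metric estimate that is blind to the Gaussian components. Since the paper itself also leaves this direction to the literature, your treatment is no less complete than the paper's; but judged as a standalone proof of the full statement, the necessity half is missing.
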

 For a deeper discussion of the non-classical central limit theorem like  the Rotar's theorem we refer the reader to \cite{Formanov2010},  \cite{Formanov2019}, \cite{Formanov2022}, \cite{Rotar1975}, \cite{Rotar1996},  \cite{Shiryaev1996}, and \cite{Zolotarev1997}.  

\section{Preliminaries}\label{sec:3}
From now on,  $X^{*}$ denotes the standard normal distributed random variable with zero mean, unit variance, and characteristic function $f_{X^{*}}(t)=e^{-\frac{1}{2}t^{2}},$ which is denoted by $X^{*}\stackrel{D}{\sim} \mathcal{N}(0,1).$  Let $\{X^{*}_{n, j}\}:=\{X^{*}_{n, j}: n\geq 1, 1\leq j\leq k_{n}\}$ be a sequence of   independent normally distributed random variables with zero means and finite variances $\sigma^{2}_{n, j},$ which are denoted by $X^{*}_{n, j}\stackrel{D}{\sim} \mathcal{N}(0, \sigma^{2}_{n, j}).$ The following representation of the standard normal distributed random variable $X^{*}$  is necessary for the proofs in the next parts. 
 \begin{lemma}\label{lem:3.1}\quad  Let $(\nu_{n}, n\geq 1)$ be a sequence integer-valued positive random variables, independent of all the random variables $X^{*}_{n, j}$ for $n\geq 1$ and $1\leq j\leq k_{n}.$  Then, for $n\geq 1$   
 \[
 X^{*}\stackrel{D}{=}\sum\limits_{j=1}^{\nu_{n}}X^{*}_{n, j}.
 \]
 \end{lemma}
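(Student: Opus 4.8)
The plan is to identify the characteristic function of the random sum $\sum_{j=1}^{\nu_n} X^*_{n,j}$ and to verify that it coincides with $f_{X^*}(t)=e^{-t^2/2}$; since the characteristic function determines the distribution uniquely, the distributional identity $X^* \stackrel{D}{=} \sum_{j=1}^{\nu_n} X^*_{n,j}$ follows at once. Throughout I work with a fixed $n\geq 1$, so that the only source of randomness beyond the summands $X^*_{n,j}$ is the index $\nu_n$.

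First I would condition on the value of $\nu_n$, which is legitimate because $\nu_n$ is independent of the whole family $\{X^*_{n,j}\}$. For a fixed $m$ the sum $\sum_{j=1}^{m} X^*_{n,j}$ is a finite sum of independent centered normal variables, hence itself normal with variance $\sum_{j=1}^{m}\sigma^2_{n,j}$, so its characteristic function is $\exp\!\big(-\tfrac{1}{2}t^2\sum_{j=1}^{m}\sigma^2_{n,j}\big)$. Summing against the law of $\nu_n$ then gives
\[
\mathbb{E}\exp\!\Big(it\sum_{j=1}^{\nu_n} X^*_{n,j}\Big)=\sum_{m=1}^{\infty}P(\nu_n=m)\exp\!\Big(-\tfrac{1}{2}t^2\sum_{j=1}^{m}\sigma^2_{n,j}\Big)=\mathbb{E}\exp\!\Big(-\tfrac{1}{2}t^2\sum_{j=1}^{\nu_n}\sigma^2_{n,j}\Big),
\]
where in the last expression the inner sum $\sum_{j=1}^{\nu_n}\sigma^2_{n,j}$ is the (random) cumulative variance evaluated at the random index $\nu_n$.

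The decisive step is then to invoke the normalization carried over from condition (C) in its random-index form, namely $\sum_{j=1}^{\nu_n}\sigma^2_{n,j}=1$ almost surely. Under this normalization the right-hand side collapses to $e^{-t^2/2}=f_{X^*}(t)$, which finishes the argument. I expect the main obstacle to be precisely this point: one must ensure that the cumulative variance \emph{up to the random stopping index} equals $1$ almost surely, and not merely in expectation — the latter would only force $\mathbb{D}\,S_{n,\nu_n}=1$ — because a distributional, rather than a second-moment, identity is being claimed. Indeed, by uniqueness of Laplace transforms, $\mathbb{E}\exp(-u\sum_{j=1}^{\nu_n}\sigma^2_{n,j})=e^{-u}$ for all $u\geq 0$ holds \emph{iff} $\sum_{j=1}^{\nu_n}\sigma^2_{n,j}=1$ a.s., so this normalization is exactly what the lemma needs. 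The remaining points are routine: the interchange of expectation with the convergent series is justified by the bound $\big|P(\nu_n=m)\exp(-\tfrac{1}{2}t^2\sum_{j=1}^{m}\sigma^2_{n,j})\big|\leq P(\nu_n=m)$, and the final conclusion is the uniqueness theorem for characteristic functions.
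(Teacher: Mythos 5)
Your proposal is correct and takes essentially the same route as the paper: condition on the value of $\nu_{n}$, use independence to write the characteristic function of the random sum as $\sum_{m=1}^{\infty}P(\nu_{n}=m)\exp\bigl(-\tfrac{1}{2}t^{2}\sum_{j=1}^{m}\sigma^{2}_{n,j}\bigr)$, collapse it to $e^{-t^{2}/2}$ via the variance normalization, and conclude by uniqueness of characteristic functions. If anything, you are more careful than the paper: the paper's proof reuses $k_{n}$ as the dummy summation index and invokes condition (C) as if $\sum_{j=1}^{m}\sigma^{2}_{n,j}=1$ held for \emph{every} $m$ in the support of $\nu_{n}$, which is precisely the almost-sure random-index normalization $\sum_{j=1}^{\nu_{n}}\sigma^{2}_{n,j}=1$ that you correctly isolate as the decisive (and, as you note via Laplace-transform uniqueness, necessary) hypothesis.
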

 \begin{proof}\quad From \cite{Gut2005}, using the Remark \ref{rem:2.1}, since the independence of $X^{*}_{n, j}$ for $n\geq 1, 1\leq j\leq k_{n}$ and $\sum\limits_{j=1}^{k_{n}}\sigma^{2}_{n, j}=1,$  it follows that 
 \[
 \begin{split}
&f_{\sum\limits_{j=1}^{\nu_{n}}X^{*}_{n, j}}(t) = \sum\limits_{k_{n}=1}^{\infty}P(\nu_{n}=k_{n})f_{\sum\limits_{j=1}^{k_{n}}X^{*}_{n, j}}(t)=\sum\limits_{k_{n}=1}^{\infty}P(\nu_{n}=k_{n})\prod\limits_{j=1}^{k_{n}}f_{X^{*}_{n, j}}(t)\\
&=\sum\limits_{k_{n}=1}^{\infty}P(\nu_{n}=k_{n})e^{-\frac{1}{2}t^{2}\sum\limits_{j=1}^{k_{n}}\sigma^{2}_{n, j}}=e^{-\frac{1}{2}t^{2}}=f_{X^{*}}(t).
\end{split}
 \]
 The above quality shows that the characteristic function of the random variable $\sum\limits_{j=1}^{\nu_{n}}X^{*}_{n, j}$ coincides with the characteristic function of the standard normally distributed random variable $X^{*}.$  This finishes the proof.
 \end{proof}
By \cite{Zolotarev1976}, we recall the concept of a probability distance, which will be used in the next sections. Let us denote by $\mathfrak{X}$ the set of random variables on a probability space $(\Omega, \mathcal{A}, \mathbb{P}).$ 
\begin{definition}\label{def:3.1}\quad Assume that the random variables $X, Y, Z\in\mathfrak{X}.$ The mapping  $d: \mathfrak{X}\times \mathfrak{X}\longrightarrow [0, +\infty]$ is called a probability distance in $\mathfrak{X},$ denoted by $d(X, Y),$ if the following statements hold:
\begin{enumerate}
\item[(1).] $d(X, Y)=0$ if $X\stackrel{a.s.}{=}Y,$
\item[(2).] $d(X, Y)=d(Y, X),$
\item[(3).] $d(X, Y)\leq d(X, Z)+d(Z, Y).$
\end{enumerate}
\end{definition}
\begin{definition}\label{def:3.2}\quad A metric $d$ is called simple if its values are determined by a pair of marginal distributions $P_{X}$ and $P_{Y}.$ In all other cases $d$ is called composed.
\end{definition}
It should be noted that, for a simple metric the following forms are equivalent
\[
d(X, Y)=d(P_{X}, P_{Y})=d(F_{X}, F_{Y}).
\]
\begin{definition}\label{def:3.3}\quad A simple distance $d$ in $\mathfrak{X}$ is called an ideal probability distance of order $s\geq 0,$ if the following statements hold:
\begin{enumerate}
\item[(i).] (Regularity) For the independent random variables $X, Y, Z\in\mathfrak{X}$
\[
d(X+Z, Y+Z)\leq d(X, Y).
\]
\item[(ii).] (Homogeneity of order $s$) For $c\ne 0,$ and $s\geq 0,$
\[
d(cX, cY)=|c|^{s}d(X, Y).
\]
\end{enumerate}
\end{definition}
It is worth pointing out that an interesting consequence of the regularity and homogeneity properties is
the semi-additivity of the metric $d:$ \quad Let $X_{1}, X_{2}, \dots , X_{n}$ and $Y_{1}, Y_{2}, \ldots , Y_{n}$ be two collections of independent random variables, then for real numbers $c_{j}, 1\leq j\leq n$ and for $s\geq 0$
\[
d\bigg(\sum\limits_{j=1}^{n}X_{j}, \sum\limits_{j=1}^{n}Y_{j} \bigg)\leq \sum\limits_{j=1}^{n}|c_{j}|^{s}d(X_{j}, Y_{j}).
\]
Next, we introduce the Zolotarev distance as an ideal probability distance that is widely applied in the theory of summation of independent random variables (see \cite{Zolotarev1997} and the references given there). The definition of Zolotarev’s distance is presented  as follows:
\begin{definition}\label{def:3.4}(\cite{Zolotarev1983})\quad The Zolotarev distance of order $s (s>0)$ on $\mathfrak{X}$ between two random variables $X, Y \in \mathfrak{X}$ is defined by
\[
\zeta_{s}(X, Y):=\sup\limits_{f\in\mathcal{D}_{s}}\bigg|\mathbb{E}f(X)-\mathbb{E}f(Y) \bigg|,
\]
here $\mathcal{D}_{s}:=\{f\in C^{r}(\mathbb{R}): |f^{(r)}(x)-f^{(r)}(y)|\leq |x-y|^{\beta}\}, r\in\mathbb{N}, \beta\in (0,1], s=r+\beta.$ The $f^{(r)}$ stands for the rth derivative of function $f.$ 
\end{definition}
Here we denote by $C(\mathbb{R})$ the set of all real-valued, bounded, and uniformly continuous functions on $\mathbb{R}$ with norm $||f||=\sup\limits_{x\in\mathbb{R}}|f(x)|$ and $C^{r}(\mathbb{R}):=\{f\in C(\mathbb{R}): f^{(k)}\in C(\mathbb{R}), 1\leq k\leq r\}.$

The main properties of the Zolotarev's distance will be presented below. The detailed proofs may be found in \cite{Zolotarev1976}, \cite{Zolotarev1983}, and \cite{Zolotarev1997}. 
\begin{proposition}\label{pro:3.1}(\cite{Zolotarev1997})\quad The distance $\zeta_{s}(X, Y)$ on $\mathfrak{X}$ is an ideal probability metric of order $s>0,$ that is, for any $c\ne 0$ and for the independent random variables $X, Y, Z\in \mathfrak{X},$ we have
\begin{enumerate}
\item[{i.}]  $\zeta_{s}(X+Z, Y+Z)\leq \zeta_{s}(X, Y),$
\item[{ii.}] $\zeta_{s}(cX, cY)=|c|^{s}\zeta_{s}(X, Y).$
\end{enumerate}
\end{proposition}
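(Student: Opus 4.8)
The plan is to verify the two defining properties of an ideal metric directly from the definition of $\zeta_s$, by understanding how the test class $\mathcal{D}_s$ transforms under the affine maps $x\mapsto cx$ (for homogeneity) and $x\mapsto x+z$ (for regularity). First I would record the basic observation that every $f\in\mathcal{D}_s\subset C^r(\mathbb{R})\subset C(\mathbb{R})$ is bounded, so that all the expectations $\mathbb{E}f(X)$, $\mathbb{E}f(Y)$, and their translated/scaled variants are finite; this makes the conditioning and Fubini manipulations below automatic and removes every integrability concern.

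For the homogeneity (ii), I would show that the map $T_c\colon f\mapsto |c|^{-s}f(c\,\cdot)$ is a bijection of $\mathcal{D}_s$ onto itself for each $c\ne 0$. Setting $g=T_cf$, differentiation gives $g^{(r)}(x)=|c|^{-s}c^{\,r}f^{(r)}(cx)$, whence
\[
|g^{(r)}(x)-g^{(r)}(y)|=|c|^{\,r-s}\,|f^{(r)}(cx)-f^{(r)}(cy)|\leq |c|^{\,r-s}\,|cx-cy|^{\beta}=|c|^{\,r+\beta-s}\,|x-y|^{\beta}=|x-y|^{\beta},
\]
using $s=r+\beta$; so $g\in\mathcal{D}_s$, and the inverse $T_{1/c}$ shows $T_c$ is onto. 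Since $f(cx)=|c|^{s}(T_cf)(x)$ forces $\mathbb{E}f(cX)-\mathbb{E}f(cY)=|c|^{s}\big(\mathbb{E}(T_cf)(X)-\mathbb{E}(T_cf)(Y)\big)$, and $f\mapsto T_cf$ ranges over all of $\mathcal{D}_s$, taking suprema yields $\zeta_s(cX,cY)=|c|^{s}\zeta_s(X,Y)$ exactly.

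For the regularity (i), the key point is that $\mathcal{D}_s$ is invariant under translation: for fixed $z$ the function $g_z(x):=f(x+z)$ lies in $\mathcal{D}_s$, because $g_z^{(r)}(x)=f^{(r)}(x+z)$ inherits the same Hölder bound. I would then condition on $Z$, using the independence of $Z$ from $X$ and from $Y$, to write
\[
\mathbb{E}f(X+Z)-\mathbb{E}f(Y+Z)=\int_{\mathbb{R}}\big(\mathbb{E}g_z(X)-\mathbb{E}g_z(Y)\big)\,dP_Z(z),
\]
and bounding the integrand by $\zeta_s(X,Y)$ (legitimate precisely because $g_z\in\mathcal{D}_s$) gives $|\mathbb{E}f(X+Z)-\mathbb{E}f(Y+Z)|\leq\zeta_s(X,Y)$ for every admissible $f$; taking the supremum over $f\in\mathcal{D}_s$ finishes (i).

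The arguments are short, so the only genuine obstacle is bookkeeping: one must confirm that the affine substitutions preserve membership in $\mathcal{D}_s$ with \emph{exactly} the right constants, in particular that the scaling in (ii) produces the power $|c|^{s}$ and nothing else. This is where the decomposition $s=r+\beta$ is essential, the factor $|c|^{r}$ arising from the $r$-fold differentiation and the factor $|c|^{\beta}$ from the Hölder exponent. Beyond this I expect no analytic difficulty, since boundedness of the test functions trivializes the conditioning step.
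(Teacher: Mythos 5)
Your proposal is correct, but note that the paper itself offers no proof of this proposition at all: it states the result with a citation to Zolotarev's works and explicitly defers the ``detailed proofs'' to \cite{Zolotarev1976}, \cite{Zolotarev1983}, \cite{Zolotarev1997}. So your argument is not an alternative route to the paper's --- it supplies the verification the paper omits, and it is the standard one. Both steps check out against the paper's definitions: for homogeneity, the map $T_c f = |c|^{-s} f(c\,\cdot)$ is indeed a bijection of $\mathcal{D}_s$ onto itself (with inverse $T_{1/c}$), since $s = r+\beta$ makes the factor $|c|^{r}$ from differentiation and $|c|^{\beta}$ from the H\"older quotient cancel $|c|^{-s}$ exactly, preserving the H\"older constant $1$ that defines $\mathcal{D}_s$; surjectivity is what upgrades the resulting inequality to the stated equality $\zeta_s(cX,cY)=|c|^{s}\zeta_s(X,Y)$, and you correctly invoke it. For regularity, translation invariance of $\mathcal{D}_s$ plus conditioning on $Z$ (legitimate by the assumed independence and by boundedness of every $f\in C^{r}(\mathbb{R})\subset C(\mathbb{R})$ under the paper's convention that $C(\mathbb{R})$ consists of bounded uniformly continuous functions) gives $|\mathbb{E}f(X+Z)-\mathbb{E}f(Y+Z)|\leq\zeta_s(X,Y)$ uniformly in $f$. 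One pedantic point you could make explicit: in $g^{(r)}(x)=|c|^{-s}c^{r}f^{(r)}(cx)$ the factor $c^{r}$ may be negative, but only $|c^{r}|=|c|^{r}$ enters the H\"older estimate, so nothing breaks. What your approach buys is a self-contained proof at the cost of a page; what the paper's citation buys is brevity, at the cost of leaving the reader to consult Zolotarev for exactly the bookkeeping you carried out.
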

\begin{proposition}\label{pro:3.2}\quad  A sufficient condition for the validity of the random central limit theorem (RCLT)
\[
\Delta_{n, \nu_{n}}=o(1), \quad\text{as}\quad n\to\infty
\]
is 
\[
\zeta_{s}(S_{n, \nu_{n}}, X^{*})=o(1), \quad\text{as}\quad n\to\infty,
\]
for $s>0.$
\end{proposition}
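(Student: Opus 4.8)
The plan is to deduce from $\zeta_{s}(S_{n,\nu_{n}},X^{*})=o(1)$ first the weak convergence $S_{n,\nu_{n}}\stackrel{D}{\longrightarrow}X^{*}$, and then to promote this to uniform convergence of the distribution functions, which is exactly $\Delta_{n,\nu_{n}}=o(1)$. The essential point to keep in mind is that $\zeta_{s}$ is generated by the smooth test class $\mathcal{D}_{s}$, whereas $\Delta_{n,\nu_{n}}$ is a supremum over the (non-smooth) indicators $\mathbf{1}_{\{\,\cdot\,<x\}}$; the bridge between the two will be supplied by the continuity of the normal distribution function $\Phi_{0,1}$.

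First I would show that $\zeta_{s}(S_{n,\nu_{n}},X^{*})\to 0$ forces $S_{n,\nu_{n}}\stackrel{D}{\longrightarrow}X^{*}$. Fix an arbitrary $g\in C_{c}^{\infty}(\mathbb{R})$. Since $g$ has compact support it lies in $C^{r}(\mathbb{R})$ and its top derivative $g^{(r)}$ is Lipschitz and bounded; combining these two facts one obtains a constant $C_{g}>0$ (e.g. the larger of the Lipschitz constant of $g^{(r)}$ and $2\|g^{(r)}\|$) for which $|g^{(r)}(x)-g^{(r)}(y)|\le C_{g}|x-y|^{\beta}$ for all $x,y\in\mathbb{R}$, where $s=r+\beta$. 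Hence $g/C_{g}\in\mathcal{D}_{s}$, and the definition of $\zeta_{s}$ gives $|\mathbb{E}g(S_{n,\nu_{n}})-\mathbb{E}g(X^{*})|\le C_{g}\,\zeta_{s}(S_{n,\nu_{n}},X^{*})\to 0$. As this holds for every $g\in C_{c}^{\infty}(\mathbb{R})$ we get convergence of the corresponding integrals, i.e. vague convergence of the laws; since all the distributions involved are probability measures, no mass can escape to infinity, so this vague convergence is in fact the weak convergence $S_{n,\nu_{n}}\stackrel{D}{\longrightarrow}X^{*}$.

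Next, because $X^{*}\stackrel{D}{\sim}\mathcal{N}(0,1)$ has the everywhere continuous distribution function $\Phi_{0,1}$, Polya's theorem upgrades the weak convergence just obtained to uniform convergence of the distribution functions, namely $\sup_{x\in\mathbb{R}}|P(S_{n,\nu_{n}}<x)-\Phi_{0,1}(x)|\to 0$; the strict inequality in the definition of $\Delta_{n,\nu_{n}}$ is immaterial here precisely because $\Phi_{0,1}$ is continuous, so the left-continuous and right-continuous versions of the distribution function have the same uniform distance to $\Phi_{0,1}$. This is exactly $\Delta_{n,\nu_{n}}=o(1)$, which completes the argument.

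The step I expect to be the genuine obstacle is the passage from smooth functionals to the indicator functions: $\zeta_{s}$ controls $|\mathbb{E}f(S_{n,\nu_{n}})-\mathbb{E}f(X^{*})|$ only for $f\in\mathcal{D}_{s}$ and therefore cannot by itself bound the uniform distance $\Delta_{n,\nu_{n}}$. The qualitative route above circumvents this by invoking weak convergence together with the continuity (Polya) of the Gaussian limit, and this suffices since only an $o(1)$ conclusion is required. A fully quantitative inequality of the form $\Delta_{n,\nu_{n}}\le c\,\zeta_{s}(S_{n,\nu_{n}},X^{*})^{\theta}$, obtained by a smoothing argument that exploits the boundedness of the standard normal density, is also available but is more delicate and is not needed for the present statement.
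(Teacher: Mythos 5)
Your proof is correct, but it takes a genuinely different route from the paper. The paper's own proof is essentially a two-step citation: it first invokes Zolotarev (1976) for the implication $\zeta_{s}(S_{n,k_{n}},X^{*})=o(1)\Rightarrow\Delta_{n,k_{n}}=o(1)$ for \emph{non-random} indices (this rests on a quantitative smoothing inequality bounding the Kolmogorov metric by a power of $\zeta_{s}$, available because the Gaussian limit has a bounded density), and then appeals to Gut (2005), i.e.\ to the conditioning identity $\Delta_{n,\nu_{n}}=\sum_{k_{n}}P(\nu_{n}=k_{n})\,\Delta_{n,k_{n}}$, to transfer the conclusion to random indices. You instead work directly with the law of the random sum $S_{n,\nu_{n}}$ and never condition on $\nu_{n}$: you extract weak convergence from the $\zeta_{s}$-convergence by rescaling compactly supported smooth test functions so that they land in $\mathcal{D}_{s}$ (your constant $C_{g}=\max$ of the Lipschitz constant of $g^{(r)}$ and $2\|g^{(r)}\|$ does give the H\"older-$\beta$ bound, splitting into $|x-y|\le 1$ and $|x-y|>1$), and then upgrade to uniform convergence of distribution functions via Polya's theorem, using the everywhere-continuity of $\Phi_{0,1}$; your remark that the left-continuous convention $P(S_{n,\nu_{n}}<x)$ is harmless for a continuous limit is also correct. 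Your argument is self-contained and purely qualitative, avoiding both the external smoothing estimate and the conditioning identity, which suffices because only an $o(1)$ conclusion is claimed; the paper's route is shorter on the page, but it outsources the real content (the passage from the smooth test class $\mathcal{D}_{s}$ to the indicator functionals defining $\Delta_{n,\nu_{n}}$) to the references, whereas you make that bridge explicit. The trade-off is that the paper's quantitative route would also deliver convergence rates if $\zeta_{s}(S_{n,\nu_{n}},X^{*})$ were estimated explicitly, which your qualitative argument by design does not.
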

\begin{proof}
From \cite{Zolotarev1976}, the convergence of the $\zeta_{s}(S_{n, k_{n}}, X^{*})=o(1)$ follows the $\Delta_{n, k_{n}}=o(1),$  when $n\to\infty.$ Therefore, on account of \cite{Gut2005}, it follows that if
\[
\zeta_{s}(S_{n, \nu_{n}}, X^{*})=o(1), \quad\text{as}\quad n\to\infty,
\]
holds, then 
\[
\Delta_{n, \nu_{n}}=o(1), \quad\text{as}\quad n\to\infty
\]
is satisfied, i. e.,  (RCLT) is valid.
\end{proof}
\begin{proposition}\label{pro:3.3}(\cite{Zolotarev1983}, Formula (4.3), Page 277)\quad Let $\{X_{n, j}\}$ and $\{Y_{n, j}\}$ be two double sequences of independent (in each row) random variables  for $n\geq 1$ and $1\leq j\leq k_{n}.$ Then, for $n\geq 1$ and $s>0$
\[
\zeta_{s}\bigg(\sum\limits_{j=1}^{k_{n}}X_{n, j}, \sum\limits_{j=1}^{k_{n}}Y_{n, j} \bigg)\leq \sum\limits_{j=1}^{k_{n}}\zeta_{s}\bigg(X_{n, j}, Y_{n, j} \bigg).
\]
\end{proposition}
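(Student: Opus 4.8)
The plan is to prove the inequality by a telescoping (hybrid) argument that uses only two structural features of $\zeta_{s}$: the triangle inequality, which holds because $\zeta_{s}$ is a probability distance in the sense of Definition \ref{def:3.1}, together with the regularity property (i) of Proposition \ref{pro:3.1}. Since all the mixing coefficients here equal one, the homogeneity property (ii) is not needed at all; this proposition is precisely the unit-coefficient instance of the semi-additivity already recorded after Definition \ref{def:3.3}. Throughout I fix $n$ and abbreviate $k:=k_{n}$.

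First I would reduce to a convenient coupling. Because $\zeta_{s}$ is a \emph{simple} metric (Definition \ref{def:3.2}), its value is determined by the marginal distributions of its two arguments alone; hence I am free to realize the whole family $\{X_{n, j}\}\cup\{Y_{n, j}\}$ on a single probability space under any joint law, and I will take all of $X_{n, 1},\ldots,X_{n, k},Y_{n, 1},\ldots,Y_{n, k}$ to be mutually independent. This reduction is harmless and is exactly what will later legitimize the use of regularity, since it guarantees that each summand being swapped is independent of the partial sum it is attached to.

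Next I would introduce the hybrid sums interpolating between $\sum_{j}X_{n, j}$ and $\sum_{j}Y_{n, j}$. For $0\leq i\leq k$ set
\[
W_{i}:=\sum\limits_{j=1}^{i}Y_{n, j}+\sum\limits_{j=i+1}^{k}X_{n, j},
\]
so that $W_{0}=\sum_{j=1}^{k}X_{n, j}$ and $W_{k}=\sum_{j=1}^{k}Y_{n, j}$. Applying the triangle inequality $k$ times along this chain gives
\[
\zeta_{s}\bigg(\sum\limits_{j=1}^{k}X_{n, j},\sum\limits_{j=1}^{k}Y_{n, j}\bigg)=\zeta_{s}(W_{0},W_{k})\leq\sum\limits_{i=1}^{k}\zeta_{s}(W_{i-1},W_{i}).
\]

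The decisive step is then to bound each consecutive term by a single one-summand distance. Writing $R_{i}:=\sum_{j=1}^{i-1}Y_{n, j}+\sum_{j=i+1}^{k}X_{n, j}$, which under the independent coupling of the first step is independent of both $X_{n, i}$ and $Y_{n, i}$, I have $W_{i-1}=X_{n, i}+R_{i}$ and $W_{i}=Y_{n, i}+R_{i}$, so the regularity property (i) of Proposition \ref{pro:3.1} yields $\zeta_{s}(W_{i-1},W_{i})\leq\zeta_{s}(X_{n, i},Y_{n, i})$. Summing over $i$ produces the claimed bound. The one point requiring genuine care, which I would flag as the main obstacle, is verifying the independence of $R_{i}$ from $X_{n, i}$ and $Y_{n, i}$ needed to invoke regularity on $W_{i-1}$ and $W_{i}$; this is precisely what the reduction to an independent coupling secures, and it is legitimate because $\zeta_{s}$, being simple, is insensitive to the choice of joint distribution.
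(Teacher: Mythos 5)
Your proof is correct, and it is worth noting that the paper itself contains no proof of this proposition at all: the statement is simply equipped with a citation to Zolotarev (1983), Formula (4.3), and the only internal hint is the remark after Definition \ref{def:3.3} that semi-additivity is a consequence of the regularity and homogeneity properties of an ideal metric. Your telescoping argument supplies exactly the standard proof that the paper delegates to the reference: the reduction to a fully independent coupling (legitimate because $\zeta_{s}$ is a simple metric, so both sides of the inequality depend only on the within-row marginal structure and not on any joint law between the $X$'s and the $Y$'s), the chain of hybrid sums $W_{i}$, the triangle inequality from Definition \ref{def:3.1}, and the regularity property of Proposition \ref{pro:3.1} applied with $Z=R_{i}$. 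You are also right that homogeneity plays no role here, since all coefficients equal one; this is a small sharpening of the paper's own remark, whose displayed semi-additivity inequality incidentally contains a typo (the coefficients $c_{j}$ appear on the right-hand side but not inside the sums on the left). The one genuinely delicate step, the mutual independence of $R_{i}$ from $X_{n,i}$ and $Y_{n,i}$ needed to invoke regularity, is precisely the point you flag and resolve through the coupling, so your argument is a complete, self-contained proof where the paper relies on an external source.
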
 
From Proposition (\ref{pro:3.3}), it is follows that
\begin{proposition}\label{pro:3.4}\quad Let $\{X_{n, j}\}$ and $\{Y_{n, j}\}$ be two double sequences of i.i.d. (in each row) random variables for $n\geq 1$ and $1\leq j\leq k_{n}.$ Then, for $n\geq 1$ and $s>0$
\[
\zeta_{s}\bigg(\sum\limits_{j=1}^{k_{n}}X_{n, j}, \sum\limits_{j=1}^{k_{n}}Y_{n, j} \bigg)\leq k_{n} \zeta_{s}\bigg(X_{n, 1}, Y_{n, 1} \bigg).
\]
\end{proposition}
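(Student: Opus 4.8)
The plan is to obtain this as an immediate specialization of Proposition \ref{pro:3.3} to the identically distributed case, combined with the fact that $\zeta_{s}$ is a simple metric. First I would invoke Proposition \ref{pro:3.3} verbatim for the two given double sequences, yielding, for every $n\geq 1$ and $s>0$,
\[
\zeta_{s}\bigg(\sum\limits_{j=1}^{k_{n}}X_{n, j}, \sum\limits_{j=1}^{k_{n}}Y_{n, j} \bigg)\leq \sum\limits_{j=1}^{k_{n}}\zeta_{s}\bigg(X_{n, j}, Y_{n, j} \bigg).
\]
This reduces the claim to showing that each summand on the right equals $\zeta_{s}(X_{n, 1}, Y_{n, 1})$, after which the sum of $k_{n}$ identical terms collapses to $k_{n}\,\zeta_{s}(X_{n, 1}, Y_{n, 1})$.

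The key observation is that $\zeta_{s}$ is a \emph{simple} probability metric in the sense of Definition \ref{def:3.2}: by Definition \ref{def:3.4}, the quantity $\zeta_{s}(X, Y)=\sup_{f\in\mathcal{D}_{s}}|\mathbb{E}f(X)-\mathbb{E}f(Y)|$ is determined solely by the marginal laws $P_{X}$ and $P_{Y}$, since each expectation $\mathbb{E}f(X)$ and $\mathbb{E}f(Y)$ depends only on the distribution of $X$ and $Y$ respectively. Because the $n$th row $\{X_{n, j}: 1\leq j\leq k_{n}\}$ consists of identically distributed variables, one has $X_{n, j}\stackrel{D}{=}X_{n, 1}$, and similarly $Y_{n, j}\stackrel{D}{=}Y_{n, 1}$, for every $1\leq j\leq k_{n}$. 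Consequently $\zeta_{s}(X_{n, j}, Y_{n, j})=\zeta_{s}(X_{n, 1}, Y_{n, 1})$ for each index $j$.

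Substituting this equality into the bound above gives the asserted inequality directly. There is no substantive obstacle; the only point requiring care is the appeal to the simple-metric property, i.e.\ that $\zeta_{s}$ is unchanged when a random variable is replaced by an identically distributed copy. This is transparent from the supremum-of-expectations definition of $\zeta_{s}$, so the whole argument amounts to a one-line counting step applied to the semi-additivity bound of Proposition \ref{pro:3.3}.
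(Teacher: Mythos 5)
Your proposal is correct and follows exactly the route the paper takes: the paper derives Proposition \ref{pro:3.4} as an immediate consequence of Proposition \ref{pro:3.3}, with the sum collapsing to $k_{n}$ equal terms because the rows are identically distributed and $\zeta_{s}$ depends only on the marginal laws. Your write-up merely makes explicit the simple-metric justification that the paper leaves implicit.
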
 
Following \cite{Gut2005}, we conclude that
\begin{proposition}\label{pro:3.5}\quad Let $\{X_{n, j}\}$ and $\{Y_{n, j}\}$ be two double sequences of independent (in each row) random variables  for $n\geq 1$ and $1\leq j\leq k_{n}.$ Assume that the sequence $(\nu_{n}, n\geq 1)$ is  independent of two double sequences $\{X_{n, j}\}$ and $\{Y_{n, j}\}$ for $n\geq 1$ and $1\leq j\leq k_{n}.$ Then, for $n\geq 1$ and $s>0$
\[
\zeta_{s}\bigg(\sum\limits_{j=1}^{\nu_{n}}X_{n, j}, \sum\limits_{j=1}^{\nu_{n}}Y_{n, j} \bigg)\leq \sum\limits_{k_{n}=1}^{\infty}P(\nu_{n}=k_{n})\sum\limits_{j=1}^{k_{n}}\zeta_{s}\bigg(X_{n, j}, Y_{n, j} \bigg).
\]
\end{proposition}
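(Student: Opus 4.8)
The plan is to reduce the randomized bound to the fixed-index semi-additivity already recorded in Proposition \ref{pro:3.3}, exploiting the independence of $(\nu_{n}, n\geq 1)$ from the two double arrays together with the defining supremum representation of the Zolotarev distance in Definition \ref{def:3.4}. First I would fix an arbitrary test function $f\in\mathcal{D}_{s}$ and, conditioning on the value of $\nu_{n}$, expand the two expectations by the law of total probability. Because $\nu_{n}$ is independent of all the $X_{n, j}$ and $Y_{n, j}$, the conditional law of $\sum\limits_{j=1}^{\nu_{n}}X_{n, j}$ given $\{\nu_{n}=k_{n}\}$ coincides with the law of the fixed-length sum $\sum\limits_{j=1}^{k_{n}}X_{n, j}$, and likewise for the $Y$-array, so that
\[
\mathbb{E} f\bigg(\sum\limits_{j=1}^{\nu_{n}}X_{n, j}\bigg)-\mathbb{E} f\bigg(\sum\limits_{j=1}^{\nu_{n}}Y_{n, j}\bigg)=\sum\limits_{k_{n}=1}^{\infty}P(\nu_{n}=k_{n})\bigg[\mathbb{E} f\bigg(\sum\limits_{j=1}^{k_{n}}X_{n, j}\bigg)-\mathbb{E} f\bigg(\sum\limits_{j=1}^{k_{n}}Y_{n, j}\bigg)\bigg].
\]
Since every $f\in\mathcal{D}_{s}$ is bounded (as $f\in C^{r}(\mathbb{R})\subseteq C(\mathbb{R})$), each expectation is finite and the interchange of expectation with the countable sum over $k_{n}$ is legitimate.

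Next I would pass to absolute values and use the triangle inequality term by term. For each fixed $k_{n}$, the quantity $|\mathbb{E} f(\sum\limits_{j=1}^{k_{n}}X_{n, j})-\mathbb{E} f(\sum\limits_{j=1}^{k_{n}}Y_{n, j})|$ is bounded above by the supremum over all $g\in\mathcal{D}_{s}$ of the same expression, which by Definition \ref{def:3.4} is exactly $\zeta_{s}(\sum\limits_{j=1}^{k_{n}}X_{n, j},\sum\limits_{j=1}^{k_{n}}Y_{n, j})$. Invoking Proposition \ref{pro:3.3} (whose hypothesis of in-row independence is supplied here) then gives, for each $k_{n}$,
\[
\bigg|\mathbb{E} f\bigg(\sum\limits_{j=1}^{k_{n}}X_{n, j}\bigg)-\mathbb{E} f\bigg(\sum\limits_{j=1}^{k_{n}}Y_{n, j}\bigg)\bigg|\leq \sum\limits_{j=1}^{k_{n}}\zeta_{s}(X_{n, j},Y_{n, j}).
\]
Combining this with the total-probability expansion yields, for the fixed $f$,
\[
\bigg|\mathbb{E} f\bigg(\sum\limits_{j=1}^{\nu_{n}}X_{n, j}\bigg)-\mathbb{E} f\bigg(\sum\limits_{j=1}^{\nu_{n}}Y_{n, j}\bigg)\bigg|\leq \sum\limits_{k_{n}=1}^{\infty}P(\nu_{n}=k_{n})\sum\limits_{j=1}^{k_{n}}\zeta_{s}(X_{n, j},Y_{n, j}).
\]

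The decisive observation is that the right-hand side no longer depends on $f$. Hence I would take the supremum over all $f\in\mathcal{D}_{s}$ on the left, which by Definition \ref{def:3.4} produces precisely $\zeta_{s}(\sum\limits_{j=1}^{\nu_{n}}X_{n, j},\sum\limits_{j=1}^{\nu_{n}}Y_{n, j})$, and the claimed inequality follows. I expect the only real subtlety -- the main obstacle -- to be the justification of the conditioning step: one must use the independence of $\nu_{n}$ from both arrays to identify the conditional distribution of the random sum with the fixed-index sum, and one must check that the countable sum may be interchanged with the expectation, which is harmless here by the boundedness of $f$. No convergence difficulty arises from the bound itself, since the Zolotarev distance is permitted to take the value $+\infty$ by Definition \ref{def:3.1}, so the inequality remains meaningful even if the right-hand side diverges.
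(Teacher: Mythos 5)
Your proposal is correct, and it is essentially the argument the paper relies on: the paper states Proposition \ref{pro:3.5} without a written proof, appealing to the conditioning identity from \cite{Gut2005} (condition on $\{\nu_{n}=k_{n}\}$, use independence of $\nu_{n}$ from both arrays) combined with the row-wise semi-additivity of Proposition \ref{pro:3.3}. Your writeup simply fills in the details of that same route, including the justification (boundedness of $f\in\mathcal{D}_{s}$ under the paper's definition of $C^{r}(\mathbb{R})$) for interchanging the expectation with the sum over $k_{n}$.
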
 
From Proposition (\ref{pro:3.5}), we have
\begin{proposition}\label{pro:3.6}\quad Let $\{X_{n, j}\}$ and $\{Y_{n, j}\}$ be two double sequences of i.i.d. (in each row) random variables for $n\geq 1$ and $1\leq j\leq k_{n}.$ Suppose that the sequence $(\nu_{n}, n\geq 1)$ is  independent of two double sequences $\{X_{n, j}\}$ and $\{Y_{n, j}\}$ for $n\geq 1$ and $1\leq j\leq k_{n}.$ Then, for $n\geq 1$ and $s>0$ 
\[
\zeta_{s}\bigg(\sum\limits_{j=1}^{\nu_{n}}X_{n, j}, \sum\limits_{j=1}^{\nu_{n}}Y_{n, j} \bigg)\leq \mathbb{E}(\nu_{n}) \zeta_{s}\bigg(X_{n, 1}, Y_{n, 1} \bigg).
\]
\end{proposition}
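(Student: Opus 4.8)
The plan is to deduce this directly from Proposition \ref{pro:3.5}, which already handles the general independent (not necessarily identically distributed) case with a random index. The only extra input needed is that the rows are now i.i.d., which will collapse the inner sum $\sum_{j=1}^{k_{n}}\zeta_{s}(X_{n, j}, Y_{n, j})$ into a single term times $k_{n}$, after which the weighted sum over $k_{n}$ becomes the expectation $\mathbb{E}(\nu_{n})$.

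First I would apply Proposition \ref{pro:3.5} verbatim to the two double sequences $\{X_{n, j}\}$ and $\{Y_{n, j}\}$ (whose independence in each row and independence from $(\nu_{n}, n\geq 1)$ is part of the hypotheses), obtaining
\[
\zeta_{s}\bigg(\sum\limits_{j=1}^{\nu_{n}}X_{n, j}, \sum\limits_{j=1}^{\nu_{n}}Y_{n, j} \bigg)\leq \sum\limits_{k_{n}=1}^{\infty}P(\nu_{n}=k_{n})\sum\limits_{j=1}^{k_{n}}\zeta_{s}\bigg(X_{n, j}, Y_{n, j} \bigg).
\]
Next I would invoke the fact that $\zeta_{s}$ is a simple metric (Definition \ref{def:3.2} and Definition \ref{def:3.4}), so its value depends only on the pair of marginal distributions of its arguments. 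Since $\{X_{n, j}\}$ and $\{Y_{n, j}\}$ are identically distributed within the $n$th row, the marginal law of $X_{n, j}$ coincides with that of $X_{n, 1}$ and likewise for $Y_{n, j}$, whence $\zeta_{s}(X_{n, j}, Y_{n, j})=\zeta_{s}(X_{n, 1}, Y_{n, 1})$ for every $1\leq j\leq k_{n}$. Consequently the inner sum is simply $\sum_{j=1}^{k_{n}}\zeta_{s}(X_{n, j}, Y_{n, j})=k_{n}\,\zeta_{s}(X_{n, 1}, Y_{n, 1})$, which is exactly the step already isolated in Proposition \ref{pro:3.4}.

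Substituting this identity into the bound above and pulling the constant $\zeta_{s}(X_{n, 1}, Y_{n, 1})$ out of the summation over $k_{n}$ yields
\[
\zeta_{s}\bigg(\sum\limits_{j=1}^{\nu_{n}}X_{n, j}, \sum\limits_{j=1}^{\nu_{n}}Y_{n, j} \bigg)\leq \zeta_{s}\bigg(X_{n, 1}, Y_{n, 1}\bigg)\sum\limits_{k_{n}=1}^{\infty}k_{n}\,P(\nu_{n}=k_{n}).
\]
The final step is to recognize the remaining series as the mean of the random index, namely $\sum_{k_{n}=1}^{\infty}k_{n}\,P(\nu_{n}=k_{n})=\mathbb{E}(\nu_{n})$ by the definition of expectation of an integer-valued positive random variable, which gives the claimed inequality. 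I do not anticipate a genuine obstacle here, since the result is a corollary of Proposition \ref{pro:3.5}; the only point requiring care is the justification that $\zeta_{s}(X_{n, j}, Y_{n, j})$ is independent of $j$ in the i.i.d. setting, which rests on $\zeta_{s}$ being a \emph{simple} metric, and the tacit assumption that $\mathbb{E}(\nu_{n})<\infty$ so that the bound is non-vacuous.
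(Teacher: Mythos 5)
Your proof is correct and follows exactly the route the paper takes: the paper derives Proposition \ref{pro:3.6} directly from Proposition \ref{pro:3.5}, with the same two observations you make explicit --- that $\zeta_{s}(X_{n,j},Y_{n,j})=\zeta_{s}(X_{n,1},Y_{n,1})$ because $\zeta_{s}$ is a simple metric depending only on the marginal laws, and that $\sum_{k_{n}=1}^{\infty}k_{n}P(\nu_{n}=k_{n})=\mathbb{E}(\nu_{n})$. In fact, your write-up supplies these intermediate steps more carefully than the paper, which states the result without spelling them out.
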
 

\section{Conditions for the validity of the central limit theorems for random sums}\label{sec:4}
The conditions in this section will play an important role in determining the validity of the random central limit theorems for random sums. Those conditions are generalizations of the conditions presented in the previous part in the sense that if $\nu_{n}\stackrel{a.s}{=}k_{n}$ for $n\geq 1,$ the random conditions deduce the conditions in non-random versions.
\begin{definition}\label{def:4.1}\quad The double sequence $\{X_{n, j}\}$ is said to satisfy the random Lindeberg condition  if for any $\epsilon >0$
\begin{equation*}
\mathbb{E}\bigg\{\sum\limits_{j=1}^{\nu_{n}}\mathbb{E}\bigg[X^{2}_{n, j}\mathbb{I}(|X_{n, j}|>\epsilon) \bigg]\bigg\}=o(1)\quad\text{as}\quad n\to\infty.\tag{RL}
\end{equation*}
\end{definition}
The following condition is stronger than random Lindeberg condition (RL).
\begin{definition}\label{def:4.2}\quad The double sequence $\{X_{n, j}\}$ is said to satisfy the random Lyapunov condition  if 
\begin{equation*}
\mathbb{E}\bigg\{\sum\limits_{j=1}^{\nu_{n}}\mathbb{E}\bigg(|X_{n, j}|^{2+\delta}\bigg)\bigg\}=o(1) \quad\text{as}\quad n\to\infty, \tag{$R\Lambda$}
\end{equation*}
for $\delta\in (0, 1].$ 
\end{definition}
 We claim that the random Lindeberg condition (RL) is deduced by the random Lyapunov condition ($R\Lambda$). 
 \begin{proposition}\label{pro:4.1}\quad For $\epsilon >0$ and for $\delta\in (0,1],$ we have
 \[
 (R\Lambda)\Longrightarrow (RL).
 \]
 \end{proposition}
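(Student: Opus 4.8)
The plan is to mirror the non-random implication $(\Lambda)\Longrightarrow(L)$ established in Proposition~\ref{pro:2.5}, but now carried out inside the outer expectation $\mathbb{E}\{\cdot\}$ taken over the random index $\nu_n$. The essential pointwise inequality is the same as before: for any fixed $\epsilon>0$ and any $\delta\in(0,1]$, the event $\{|X_{n,j}|>\epsilon\}$ forces $|X_{n,j}|^{\delta}>\epsilon^{\delta}$, hence
\[
X^{2}_{n,j}\mathbb{I}(|X_{n,j}|>\epsilon)\leq \frac{1}{\epsilon^{\delta}}|X_{n,j}|^{2+\delta}\mathbb{I}(|X_{n,j}|>\epsilon)\leq \frac{1}{\epsilon^{\delta}}|X_{n,j}|^{2+\delta}.
\]
Taking the (inner) expectation $\mathbb{E}[\cdot]$ over $X_{n,j}$ preserves this bound, giving $\mathbb{E}[X^{2}_{n,j}\mathbb{I}(|X_{n,j}|>\epsilon)]\leq \epsilon^{-\delta}\,\mathbb{E}(|X_{n,j}|^{2+\delta})$ termwise.

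Next I would sum over $j$ from $1$ to $\nu_n$. Since the bound holds term-by-term with the nonnegative constant $\epsilon^{-\delta}$, summing gives
\[
\sum\limits_{j=1}^{\nu_{n}}\mathbb{E}\bigl[X^{2}_{n,j}\mathbb{I}(|X_{n,j}|>\epsilon)\bigr]\leq \frac{1}{\epsilon^{\delta}}\sum\limits_{j=1}^{\nu_{n}}\mathbb{E}\bigl(|X_{n,j}|^{2+\delta}\bigr),
\]
this inequality being valid for every realization of the random upper limit $\nu_n$. Finally, applying the outer expectation $\mathbb{E}\{\cdot\}$ (which is monotone and pulls out the constant) yields
\[
\mathbb{E}\biggl\{\sum\limits_{j=1}^{\nu_{n}}\mathbb{E}\bigl[X^{2}_{n,j}\mathbb{I}(|X_{n,j}|>\epsilon)\bigr]\biggr\}\leq \frac{1}{\epsilon^{\delta}}\,\mathbb{E}\biggl\{\sum\limits_{j=1}^{\nu_{n}}\mathbb{E}\bigl(|X_{n,j}|^{2+\delta}\bigr)\biggr\}.
\]
Invoking the hypothesis $(R\Lambda)$, the right-hand side is $o(1)$ as $n\to\infty$; since $\epsilon>0$ is fixed, the left-hand side is also $o(1)$, which is exactly the random Lindeberg condition $(RL)$.

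The only subtlety worth flagging is the interplay between the random summation limit $\nu_n$ and the two layers of expectation. Because the governing inequality holds \emph{pointwise in the realization of $\nu_n$} with a deterministic constant $\epsilon^{-\delta}$, the randomness of the index creates no genuine obstacle: one may legitimately condition on $\nu_n=k_n$, apply the finite-sum estimate already proved in Proposition~\ref{pro:2.5}, and then average via $\mathbb{E}\{\cdot\}=\sum_{k_n=1}^{\infty}P(\nu_n=k_n)(\cdot)$. I expect the main (minor) point requiring care to be keeping the nested expectation notation consistent—the inner $\mathbb{E}$ acting on the summands $X_{n,j}$ versus the outer $\mathbb{E}$ over $\nu_n$—rather than any analytic difficulty, since the structure is a direct randomization of the classical argument.
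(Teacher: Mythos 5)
Your proof is correct and follows essentially the same route as the paper's: both rest on the pointwise bound $X^{2}_{n,j}\mathbb{I}(|X_{n,j}|\geq\epsilon)\leq \epsilon^{-\delta}|X_{n,j}|^{2+\delta}$ (the inequality behind Proposition~\ref{pro:2.5}), which is then summed over $j\leq\nu_{n}$ and wrapped in the outer expectation over $\nu_{n}$ before invoking $(R\Lambda)$. Your added care in justifying the passage through the random index (conditioning on $\nu_{n}=k_{n}$ and averaging) is a detail the paper leaves implicit, but it is the same argument.
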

 \begin{proof}
 From the inequality (\ref{equ:2.2}), for  $\epsilon >0$ and for $\delta\in (0,1],$ it follows that
\begin{equation}\label{equ:4.1}
\mathbb{E}\bigg\{\sum\limits_{j=1}^{\nu_{n}}\mathbb{E}\bigg(X^{2}_{n, j}\mathbb{I}(|X_{n, j}|\geq \epsilon) \bigg)\bigg\}\leq \frac{1}{\epsilon^{\delta}}\mathbb{E}\bigg\{\sum\limits_{j=1}^{\nu_{n}}\mathbb{E}\bigg(X^{2+\delta}_{n, j}\mathbb{I}(|X_{n, j}|\geq \epsilon) \bigg)\bigg\}\leq \frac{1}{\epsilon^{\delta}}\mathbb{E}\bigg\{\sum\limits_{j=1}^{\nu_{n}}\mathbb{E}\bigg(X^{2+\delta}_{n, j} \bigg)\bigg\}.
\end{equation}
Applying $(R\Lambda),$ we have the proof.
 \end{proof}
 The following condition is weaker than random Lindeberg condition (RL).
 \begin{definition}\label{def:4.3}\quad  The double sequence $\{X_{n, j}\}$ is said to satisfy the random Feller condition  if 
\begin{equation*}
\mathbb{E}\bigg\{\max\limits_{1\leq j\leq \nu_{n}}\sigma^{2}_{n, j}\bigg\}=o(1)\quad\text{as}\quad n\to\infty. \tag{RF}
\end{equation*}
 \end{definition}
 The next implication shows that the random Feller condition is deduced by the random Lindeberg condition. 
\begin{proposition}\label{pro:4.2}\quad For $\epsilon >0,$ we have
 \[
(RL)\Longrightarrow (RF), \quad\text{as}\quad n\to\infty.
\]
 \end{proposition}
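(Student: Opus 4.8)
The plan is to mimic the non-random implication $(L)\Longrightarrow(F)$ established in Proposition \ref{pro:2.4} and then to average the resulting estimate over the random index $\nu_{n}$, exploiting its independence of the array $\{X_{n, j}\}$. First I would record the pointwise (in the row-length) form of inequality (\ref{equ:2.1}): for every fixed positive integer $k_{n}$ and every $\epsilon>0$, using $\mathbb{E}X_{n, j}=0$ together with $\sum_{j=1}^{k_{n}}\sigma^{2}_{n, j}\leq 1$,
\[
\max_{1\leq j\leq k_{n}}\sigma^{2}_{n, j}\leq \epsilon^{2}\sum_{j=1}^{k_{n}}\sigma^{2}_{n, j}+\sum_{j=1}^{k_{n}}\int_{|x|\geq\epsilon}x^{2}\,dF_{n, j}(x)\leq \epsilon^{2}+\sum_{j=1}^{k_{n}}\mathbb{E}\big[X^{2}_{n, j}\mathbb{I}(|X_{n, j}|\geq\epsilon)\big],
\]
where the tail integral has simply been rewritten as a truncated second moment.

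Next, since each $\sigma^{2}_{n, j}$ is a deterministic number, the quantity $\max_{1\leq j\leq\nu_{n}}\sigma^{2}_{n, j}$ depends on the sample point only through $\nu_{n}$. Because $(\nu_{n}, n\geq 1)$ is independent of the whole array, I would condition on the value of $\nu_{n}$ and expand the expectation exactly as in the Gut-type decomposition of $\Delta_{n,\nu_{n}}$:
\[
\mathbb{E}\Big\{\max_{1\leq j\leq\nu_{n}}\sigma^{2}_{n, j}\Big\}=\sum_{k_{n}=1}^{\infty}P(\nu_{n}=k_{n})\max_{1\leq j\leq k_{n}}\sigma^{2}_{n, j}.
\]
Substituting the pointwise bound above into each summand and using $\sum_{k_{n}=1}^{\infty}P(\nu_{n}=k_{n})=1$ gives
\[
\mathbb{E}\Big\{\max_{1\leq j\leq\nu_{n}}\sigma^{2}_{n, j}\Big\}\leq \epsilon^{2}+\sum_{k_{n}=1}^{\infty}P(\nu_{n}=k_{n})\sum_{j=1}^{k_{n}}\mathbb{E}\big[X^{2}_{n, j}\mathbb{I}(|X_{n, j}|\geq\epsilon)\big]=\epsilon^{2}+\mathbb{E}\Big\{\sum_{j=1}^{\nu_{n}}\mathbb{E}\big[X^{2}_{n, j}\mathbb{I}(|X_{n, j}|\geq\epsilon)\big]\Big\},
\]
the last equality being precisely the conditioning identity that defines the left-hand side of the random Lindeberg condition (RL).

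Finally I would invoke (RL) to send the second term to $0$ as $n\to\infty$, yielding $\limsup_{n}\mathbb{E}\{\max_{1\leq j\leq\nu_{n}}\sigma^{2}_{n, j}\}\leq\epsilon^{2}$; letting $\epsilon\downarrow 0$ then gives (RF). The step I regard as the main obstacle is the bookkeeping that converts the $\nu_{n}$-averaged family of non-random bounds back into the single expression occurring in (RL); this rests entirely on the independence of $\nu_{n}$ from the array and on recognizing the truncated-moment form of the tail integral. A minor technical caveat is the normalization $\sum_{j=1}^{k_{n}}\sigma^{2}_{n, j}\leq 1$, the natural analogue of the identity used in (\ref{equ:2.1}), which should be read under the convention that the summation index $k_{n}$ ranges over the admissible row-lengths of the double sequence; the distinction between $|x|>\epsilon$ and $|x|\geq\epsilon$ is immaterial here since $\epsilon>0$ is arbitrary.
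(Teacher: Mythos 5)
Your proposal is correct and follows essentially the same route as the paper: the paper's proof likewise starts from inequality (\ref{equ:2.1}), averages it over the distribution of $\nu_{n}$ to obtain the bound $\mathbb{E}\{\max_{1\leq j\leq \nu_{n}}\sigma^{2}_{n, j}\}\leq \epsilon^{2}+\mathbb{E}\{\sum_{j=1}^{\nu_{n}}\int_{|x|>\epsilon}x^{2}dF_{n, j}(x)\}$, and then invokes (RL) with $\epsilon$ arbitrary. Your write-up is in fact more explicit than the paper's, which states this inequality and stops, leaving the conditioning identity and the final $\epsilon\downarrow 0$ step implicit.
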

\begin{proof}
From inequality (\ref{equ:2.1}), it is easy to follow that the following estimation 
\begin{equation}\label{equ:4.2}
\mathbb{E}\bigg\{\max\limits_{1\leq j\leq \nu_{n}}\sigma^{2}_{n, j}\bigg\}\leq \epsilon^{2}+\mathbb{E}\bigg\{\max\limits_{1\leq j\leq \nu_{n}}\int\limits_{|x|>\epsilon}x^{2}dF_{n, j}(x)\bigg\}
\leq \epsilon^{2}+\mathbb{E}\bigg\{\sum\limits_{j=1}^{\nu_{n}}\int\limits_{|x|>\epsilon}x^{2}dF_{n, j}(x)\bigg\}
\end{equation}
is true, for any $\epsilon >0.$ 
\end{proof}
The inequality (\ref{equ:4.2}) shows that for any double sequence $\{X_{n, j}\}$ of independent random variables (in each row) which have zero means $\mathbb{E} X_{n, j}=0$ and finite variances $0<\mathbb{D} X_{n, j}=\sigma^{2}_{n, j}<\infty,$ the random Lindeberg condition deduces the random Feller condition. The reverse is true for the sequence $\{X^{*}_{n, j}\}$ of independent (in each row) normally distributed random variables which  have zero means $\mathbb{E} (X^{*}_{n, j})=0$ and positive finite variances $\mathbb{D}(X^{*}_{n, j})=\sigma^{2}_{n, j}\in (0, +\infty).$
\begin{proposition}\label{pro:4.3}\quad If  the random Feller condition (RF) is satisfied for  the sequence $\{X^{*}_{n, j}\},$ then the random Lindeberg condition (RL) is also valid. This means that, for the sequence $\{X^{*}_{n, j}\},$ the following implication:
\[
(RF)\Longrightarrow (RL), \quad\text{as}\quad n\to\infty,
\]
holds.
\end{proposition}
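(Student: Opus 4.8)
The plan is to establish (RL) by a pointwise estimate conditional on each realized value $\nu_n=k_n$, followed by taking the expectation in $\nu_n$. The crux is that normality is precisely what makes this \emph{reverse} implication hold: for a centered Gaussian the truncated second moment $\mathbb{E}\big[(X^*_{n,j})^2\mathbb{I}(|X^*_{n,j}|>\epsilon)\big]$ is dominated by the full fourth moment, which equals $3\sigma^4_{n,j}$. This lets me trade a truncated integral for a power of the variance, exactly the mechanism already exploited in Proposition \ref{pro:2.6}.

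First I would record that, since $X^*_{n,j}\stackrel{D}{\sim}\mathcal{N}(0,\sigma^2_{n,j})$ (Remark \ref{rem:2.1}), the fourth moment is $\mathbb{E}(X^*_{n,j})^4=3\sigma^4_{n,j}$. Next, on the event $\{|X^*_{n,j}|>\epsilon\}$ one has $1\leq\epsilon^{-2}(X^*_{n,j})^2$, so a truncation argument gives
\[
\mathbb{E}\big[(X^*_{n,j})^2\mathbb{I}(|X^*_{n,j}|>\epsilon)\big]\leq\frac{1}{\epsilon^2}\,\mathbb{E}(X^*_{n,j})^4=\frac{3\sigma^4_{n,j}}{\epsilon^2}.
\]
Summing over $1\le j\le k_n$ and invoking the inequality $\sum_{j=1}^{k_n}\sigma^4_{n,j}\leq\big(\max_{1\le j\le k_n}\sigma^2_{n,j}\big)\sum_{j=1}^{k_n}\sigma^2_{n,j}\leq\max_{1\le j\le k_n}\sigma^2_{n,j}$, where the last bound uses condition (C) that the variances sum to one, yields the conditional estimate $\sum_{j=1}^{k_n}\mathbb{E}[(X^*_{n,j})^2\mathbb{I}(|X^*_{n,j}|>\epsilon)]\leq 3\epsilon^{-2}\max_{1\le j\le k_n}\sigma^2_{n,j}$. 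Since the partial sums of nonnegative variances never exceed the total, the same bound with $k_n$ replaced by any realized value of $\nu_n$ holds almost surely.

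Finally I would take the expectation over $\nu_n$ (which is independent of the $X^*_{n,j}$), obtaining
\[
\mathbb{E}\bigg\{\sum_{j=1}^{\nu_n}\mathbb{E}\big[(X^*_{n,j})^2\mathbb{I}(|X^*_{n,j}|>\epsilon)\big]\bigg\}\leq\frac{3}{\epsilon^2}\,\mathbb{E}\bigg\{\max_{1\le j\le\nu_n}\sigma^2_{n,j}\bigg\},
\]
whose right-hand side is $o(1)$ by the random Feller condition (RF); the left-hand side is exactly the expression in (RL). The one delicate point — and the only place normality enters essentially — is the moment bound of the second paragraph: for a general centered summand with unit-sum variances the truncated second moment cannot be dominated by $\sigma^4_{n,j}$, which is why $(RF)\Rightarrow(RL)$ fails outside the Gaussian class and holds here. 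Everything else is routine summation together with conditioning on the independent random index $\nu_n$.
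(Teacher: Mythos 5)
Your proof is correct, but it runs on a different engine than the paper's. The paper exploits the scaling identity $X^{*}_{n,j}\stackrel{D}{=}\sigma_{n,j}X^{*}$ to rewrite each truncated second moment as $\sigma^{2}_{n,j}\,\mathbb{E}\big[(X^{*})^{2}\mathbb{I}(|X^{*}|\geq \epsilon/\sigma_{n,j})\big]$, sums using $\sum_{j}\sigma^{2}_{n,j}=1$ to get the single bound $\mathbb{E}\big[(X^{*})^{2}\mathbb{I}(|X^{*}|\geq \epsilon/\sigma^{*})\big]$ with $\sigma^{*}=\sup_{1\leq j\leq k_{n}}\sigma_{n,j}$, and then lets this vanish by integrability of $(X^{*})^{2}$ once (RF) is read as forcing $\sigma^{*}\to 0$. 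You instead use the explicit Gaussian fourth moment $\mathbb{E}(X^{*}_{n,j})^{4}=3\sigma^{4}_{n,j}$, a Markov-type truncation bound, and the inequality $\sum_{j}\sigma^{4}_{n,j}\leq \max_{j}\sigma^{2}_{n,j}$, arriving at the quantitative estimate
\[
\mathbb{E}\bigg\{\sum_{j=1}^{\nu_{n}}\mathbb{E}\big[(X^{*}_{n,j})^{2}\mathbb{I}(|X^{*}_{n,j}|>\epsilon)\big]\bigg\}\leq \frac{3}{\epsilon^{2}}\,\mathbb{E}\bigg\{\max_{1\leq j\leq \nu_{n}}\sigma^{2}_{n,j}\bigg\}.
\]
This buys two things the paper's argument lacks: an explicit constant, and a right-hand side that \emph{is} the (RF) quantity itself, so the implication is immediate realization-by-realization and no conversion of (RF) into the deterministic statement $\sigma^{*}\to 0$ is needed --- a step the paper glosses over and which is actually delicate, since (RF) controls the maximum over the random range $1\leq j\leq \nu_{n}$ in expectation, not the supremum over the full deterministic row. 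What the paper's route buys in exchange is that it only needs square-integrability of the generator $X^{*}$ (so it would work for any scale family $\sigma_{n,j}X$ with $\mathbb{E}X^{2}<\infty$), whereas your argument needs bounded kurtosis; for Gaussians both are free, and your closing remark correctly identifies that some such distributional input is what makes this reverse implication fail outside the Gaussian class. One caveat you share with the paper: your phrase ``partial sums of nonnegative variances never exceed the total'' silently assumes $\sum_{j=1}^{m}\sigma^{2}_{n,j}\leq 1$ for every value $m$ in the support of $\nu_{n}$, including $m>k_{n}$; this is an ambiguity of the paper's framework (condition (C) only normalizes the sum up to $k_{n}$), not a defect specific to your argument.
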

\begin{proof}
For $\epsilon>0, n\geq 1$ and with the suggestion that $\sum\limits_{j=1}^{k_{n}}\sigma^{2}_{n, j}=1,$ the Lindeberg condition (L) for the sequence $\{X^{*}_{n, j}\}$ shows that
\[
\begin{split}
\sum\limits_{j=1}^{k_{n}}\mathbb{E}\bigg[(X^{*}_{n, j})^{2}\mathbb{I}(|X^{*}_{n, j})|\geq \epsilon \bigg]&=\sum\limits_{j=1}^{k_{n}}\mathbb{E}\bigg[(\sigma_{n, j}X^{*})^{2}\mathbb{I}(|\sigma_{n, j}X^{*})|\geq \epsilon \bigg]\\
&=\sum\limits_{j=1}^{k_{n}}\sigma^{2}_{n, j}\mathbb{E}\bigg[(X^{*})^{2}\mathbb{I}(|X^{*})|\geq \frac{\epsilon}{\sigma_{n, j}} \bigg]\leq\mathbb{E}\bigg[(X^{*})^{2}\mathbb{I}(|X^{*})|\geq \frac{\epsilon}{\sigma^{*}} \bigg],
\end{split}
\]
where $\sigma^{*}=\sup\limits_{1\leq j\leq k_{n}}\sigma_{n, j}.$\\
Therefore, from \cite{Gut2005}, we have the following estimation:
\begin{equation}\label{equ:4.3}
\mathbb{E}\bigg\{\sum\limits_{j=1}^{\nu_{n}}\mathbb{E}\bigg[(X^{*}_{n, j})^{2}\mathbb{I}(|X^{*}_{n, j})|\geq \epsilon \bigg]\bigg\}\leq \mathbb{E}\bigg[(X^{*})^{2}\mathbb{I}(|X^{*})|\geq \frac{\epsilon}{\sigma^{*}}\bigg],
\end{equation}
for $\epsilon>0$ and $\sigma^{*}=\sup\limits_{1\leq j\leq k_{n}}\sigma_{n, j}.$\\
If the random Feller condition (RF) is satisfied for the sequence  $\{X^{*}:=X^{*}_{n, j}/\sigma_{n, j}\},$ then $\frac{1}{\sigma^{*}}\to \infty$ as $n\to\infty.$ It is conclude  that 
\[
\mathbb{E}\bigg[(X^{*})^{2}\mathbb{I}(|X^{*})|\geq \frac{\epsilon}{\sigma^{*}}\bigg]=o(1)\quad\text{as}\quad n\to\infty,
\]
since $\mathbb{E}(X^{*})^{2}=1.$ The above inequality (\ref{equ:4.3}) shows that the random Lindeberg  condition (RL) for the sequence  $\{X^{*}\}$ is valid. The proof  is complete.
\end{proof}
The following condition divides the family of central limit theorems for random sums into two classes: classical and non-classical. It is so-called the random condition of asymptotic infinitesimality for independent individual summands  $X_{j}$ in the random sums $\sum\limits_{j=1}^{\nu_{n}}X_{n, j}.$ 
\begin{definition}\label{def:4.4}(Random condition of asymptotic infinitesimality)\quad The individual summands $X_{n, j} (n \geq 1, 1\leq j\leq k_{n})$ in random sums $\sum\limits_{j=1}^{\nu_{n}}X_{n, j}$   are said to satisfy the random condition of asymptotic infinitesimality if, for any $\epsilon >0$ 
\begin{equation*}
\mathbb{E}\bigg\{\max\limits_{1\leq j\leq \nu_{n}}P\big(|X_{n, j}|\geq \epsilon\big) \bigg\}=o(1)\quad\text{as}\quad n\to\infty.\tag{RI}
\end{equation*} 
\end{definition}
\begin{remark}\label{rem:4.1}\quad If $\nu_{n}\stackrel{a.s.}{=}k_{n}$ for $n\geq 1,$ then the random condition of asymptotic infinitesimality (RI) deduces the non-random condition of asymptotic infinitesimality (I).
\end{remark}
The following propositions establish the relation between the random condition of asymptotic infinitesimality for the individual summands $X_{n, j},  (n\geq 1, 1\leq j\leq k_{n}),$ and the related conditions in the central limit theorem of classical situations.
 \begin{proposition}\label{pro:4.4}\quad The following statements are equivalent:
 \begin{enumerate}
 \item[({\bf A}).] The individual summands $X_{n, j} (n\geq 1, 1\leq j\leq k_{n})$ are satisfied the (RI), that is for any $\epsilon>0$
 \[
 \mathbb{E}\bigg\{\max\limits_{1\leq j\leq \nu_{n}}P\big(|X_{n, j}|\geq \epsilon\big) \bigg\}=o(1)\quad\text{as}\quad n\to\infty.
 \]
 \item[({\bf B}).] As $n\to\infty$
 \[
 \mathbb{E}\bigg\{\max\limits_{1\leq j\leq \nu_{n}}\mathbb{E}\bigg(\frac{X_{n, j}^{2}}{1+X_{j}^{2}}\bigg)\bigg\}=o(1).
 \]
 \item[({\bf C}).] Denote  $f_{n, j}(t)=\mathbb{E}(e^{itX_{n, j}})$ the characteristic function of a random variable $X_{n, j}.$ Then
 \[
 \mathbb{E}\bigg\{\max\limits_{1\leq j\leq \nu_{n}}\big|f_{n, j}(t)-1 \big|\bigg\}=o(1) \quad\text{as}\quad n\to\infty.
 \]
 \end{enumerate}
\end{proposition}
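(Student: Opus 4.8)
The plan is to derive each equivalence from a \emph{distribution-free} pointwise inequality for a single summand $X_{n,j}$ — precisely the kind of inequality on which Petrov's Lemma~3.1 (our Proposition~\ref{pro:2.1}) rests — and then to transport it to the random-index setting by the conditioning device used throughout the paper. Since $\nu_n$ is independent of the array and each inner quantity $\psi_{n,j}\in\{P(|X_{n,j}|\geq\epsilon),\ \mathbb{E}(X_{n,j}^2/(1+X_{n,j}^2)),\ |f_{n,j}(t)-1|\}$ is a deterministic number, one has, following \cite{Gut2005},
\[
\mathbb{E}\Big\{\max_{j\leq\nu_n}\psi_{n,j}\Big\}=\sum_{m=1}^{\infty}P(\nu_n=m)\,\max_{1\leq j\leq m}\psi_{n,j}.
\]
Because the pointwise inequalities below hold for every truncation level $m$, they survive multiplication by $P(\nu_n=m)$ and summation; monotonicity of $\max$ and linearity of the sum do the rest.

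For (A)$\Leftrightarrow$(B) I would start from the elementary two-sided bound, valid for any random variable $X$,
\[
\frac{\epsilon^2}{1+\epsilon^2}\,P(|X|\geq\epsilon)\ \leq\ \mathbb{E}\Big(\frac{X^2}{1+X^2}\Big)\ \leq\ \frac{\epsilon^2}{1+\epsilon^2}+P(|X|\geq\epsilon),
\]
obtained by splitting the integral of $x^2/(1+x^2)$ at $|x|=\epsilon$ and using that $x^2/(1+x^2)$ is increasing in $|x|$. Taking $\max_{1\leq j\leq m}$ (a constant additive term survives the max) and applying the conditioning identity yields
\[
\frac{\epsilon^2}{1+\epsilon^2}\,\mathbb{E}\Big\{\max_{j\leq\nu_n}P(|X_{n,j}|\geq\epsilon)\Big\}\leq \mathbb{E}\Big\{\max_{j\leq\nu_n}\mathbb{E}\frac{X_{n,j}^2}{1+X_{n,j}^2}\Big\}\leq\frac{\epsilon^2}{1+\epsilon^2}+\mathbb{E}\Big\{\max_{j\leq\nu_n}P(|X_{n,j}|\geq\epsilon)\Big\}.
\]
The left inequality gives (B)$\Rightarrow$(A) for each fixed $\epsilon$; the right one gives (A)$\Rightarrow$(B) after letting $\epsilon\to0$, exactly as in the non-random case.

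For (B)$\Leftrightarrow$(C) the two directions are asymmetric. For (B)$\Rightarrow$(C) I would use $|f_{n,j}(t)-1|\leq\mathbb{E}|e^{itX_{n,j}}-1|$ together with $|e^{iu}-1|^2\leq 5\,u^2/(1+u^2)$ and $u^2/(1+u^2)\leq\max(1,t^2)\,x^2/(1+x^2)$ at $u=tx$, giving $|f_{n,j}(t)-1|\leq C(t)\sqrt{\mathbb{E}\big(X_{n,j}^2/(1+X_{n,j}^2)\big)}$; taking $\max_j$ (the root is increasing) and then the $\nu_n$-expectation, the concavity of $\sqrt{\cdot}$ lets me pull the root outside by Jensen, so $\mathbb{E}\{\max_{j\leq\nu_n}|f_{n,j}(t)-1|\}\leq C(t)\sqrt{\mathbb{E}\{\max_{j\leq\nu_n}\mathbb{E}(X_{n,j}^2/(1+X_{n,j}^2))\}}$. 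For (C)$\Rightarrow$(B) I would integrate the real part: from $\tfrac12\int_{-1}^1(1-\operatorname{Re}f_{n,j}(t))\,dt=\mathbb{E}\big(1-\tfrac{\sin X_{n,j}}{X_{n,j}}\big)$ and the bound $1-\tfrac{\sin u}{u}\geq c\,u^2/(1+u^2)$ one gets $\mathbb{E}(X_{n,j}^2/(1+X_{n,j}^2))\leq \tfrac1{2c}\int_{-1}^1|f_{n,j}(t)-1|\,dt$; here the order of operations is favorable because $\max_j\int\leq\int\max_j$, so after randomizing and using Tonelli,
\[
\mathbb{E}\Big\{\max_{j\leq\nu_n}\mathbb{E}\frac{X_{n,j}^2}{1+X_{n,j}^2}\Big\}\leq\frac{1}{2c}\int_{-1}^1\mathbb{E}\Big\{\max_{j\leq\nu_n}|f_{n,j}(t)-1|\Big\}\,dt.
\]

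The main obstacle is this last direction. The difficulty is not any single inequality but the bookkeeping of the order of $\max_j$, the $t$-integral, and the $\nu_n$-expectation: one must take the max \emph{inside} the integral (via $\max_j\int\le\int\max_j$) before randomizing, and then, since (C) only furnishes pointwise-in-$t$ decay of the integrand — which is uniformly bounded by $2$ — invoke dominated convergence to send the integral to $0$. I would also record at the outset that the conditioning identity requires the independence of $\nu_n$ from $\{X_{n,j}\}$ (assumed in the paper) and the nonnegativity of the summands (for Tonelli); granting these, the three randomized statements bound one another up to the $\epsilon$- and $t$-dependent constants, and the equivalence follows.
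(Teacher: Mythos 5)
Your proposal is correct, and on the equivalence (A)$\Leftrightarrow$(B) it runs along essentially the same lines as the paper: the two-sided pointwise bound obtained by splitting $x^{2}/(1+x^{2})$ at $|x|=\epsilon$, followed by the conditioning identity $\mathbb{E}\{\max_{j\leq\nu_{n}}\psi_{n,j}\}=\sum_{m}P(\nu_{n}=m)\max_{1\leq j\leq m}\psi_{n,j}$. Where you genuinely depart from the paper is in the treatment of statement (C). The paper proves only three implications --- (A)$\Rightarrow$(B), (B)$\Rightarrow$(A), and (A)$\Rightarrow$(C), the last via $|e^{itx}-1|\leq|tx|$ on $\{|x|<\epsilon\}$ and the trivial bound $2$ on the tail --- and then stops: it never derives (A) or (B) from (C), so the three-way equivalence is not actually closed by the paper's own argument. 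Your proof supplies exactly this missing direction, using the identity $\tfrac{1}{2}\int_{-1}^{1}\bigl(1-\operatorname{Re}f_{n,j}(t)\bigr)\,dt=\mathbb{E}\bigl(1-\tfrac{\sin X_{n,j}}{X_{n,j}}\bigr)$, the lower bound $1-\tfrac{\sin u}{u}\geq c\,\tfrac{u^{2}}{1+u^{2}}$, the interchange $\max_{j}\int\leq\int\max_{j}$ together with Tonelli, and dominated convergence (the integrand is bounded by $2$ and tends to $0$ pointwise in $t$ by (C)); all of these steps check out. Your forward route (B)$\Rightarrow$(C), via $|e^{iu}-1|^{2}\leq 5u^{2}/(1+u^{2})$, the substitution $u=tx$ with the factor $\max(1,t^{2})$, and two applications of Jensen's inequality to pull the square root outside the expectations, is also sound, though heavier than the paper's direct (A)$\Rightarrow$(C); since you already have (A)$\Leftrightarrow$(B), either forward route suffices. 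In short, your argument is not only correct but strictly more complete than the paper's: the price is the extra analytic machinery (integrated characteristic functions and dominated convergence) that the (C)$\Rightarrow$(B) direction genuinely requires, and which the paper avoids only by omitting that direction altogether.
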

\begin{proof} $({\bf A})\Longrightarrow ({\bf B}):$ Let $\epsilon>0$ be a chosen small number and $k_{n}$ be a sufficient large. Then
\[
\begin{split}
&\max\limits_{1\leq j\leq k_{n}}\mathbb{E}\bigg(\frac{X_{n, j}^{2}}{1+X_{n, j}^{2}}\bigg)\leq \max\limits_{1\leq j\leq k_{n}}\bigg[\mathbb{E}\bigg(\frac{X_{n, j}^{2}}{1+X^{2}_{n, j}}\mathbb{I}(|X_{n, j}|<\epsilon)\bigg)\bigg]\\
&+\max\limits_{1\leq j\leq k_{n}}\bigg[\mathbb{E}\bigg(\mathbb{I}(|X_{n, j}|\geq \epsilon)\bigg)\bigg]\leq \frac{\epsilon^{2}}{1+\epsilon^{2}}+\max\limits_{1\leq j\leq k_{n}}P\bigg(|X_{n, j}|\geq\epsilon\bigg).
\end{split}
\] 
Therefore
\[
\mathbb{E}\bigg\{\max\limits_{1\leq j\leq \nu_{n}}\mathbb{E}\bigg(\frac{X_{n, j}^{2}}{1+X_{j}^{2}}\bigg)\bigg\}\leq  \frac{\epsilon^{2}}{1+\epsilon^{2}}+\mathbb{E}\bigg(\max\limits_{1\leq j\leq \nu_{n}}P(|X_{n, j}|\geq \epsilon) \bigg).
\]
Since $\epsilon>0$ is chosen arbitrarily small, the last inequality shows that $(A)\Longrightarrow (B).$\\
$({\bf B})\Longrightarrow ({\bf A}):$ For every $\epsilon >0,$ let us consider  the following inequalities:
\[
\begin{split}
&\max\limits_{1\leq j\leq k_{n}}\mathbb{E}\bigg(\frac{X_{n, j}^{2}}{1+X_{n, j}^{2}}\bigg)\geq \max\limits_{1\leq j\leq k_{n}}\mathbb{E}\bigg(\frac{X_{n, j}^{2}}{1+X_{n, j}^{2}}\mathbb{I}(|X_{n, j}|\geq \epsilon)\bigg)\\
&\geq \frac{\epsilon^{2}}{1+\epsilon^{2}}\max\limits_{1\leq j\leq k_{n}}\mathbb{E}\bigg(\mathbb{I}(|X_{n, j}|\geq \epsilon)\bigg)=\frac{\epsilon^{2}}{1+\epsilon^{2}}\max\limits_{1\leq j\leq k_{n}}P\bigg(|X_{n, j}|\geq \epsilon\bigg).
\end{split}
\]
Then, our assertion will be valid via the following inequality:
\[
\mathbb{E}\bigg(\max\limits_{1\leq j\leq \nu_{n}}P(|X_{n, j}|\geq \epsilon)\bigg)\leq  \mathbb{E}\bigg\{\max\limits_{1\leq j\leq \nu_{n}}\mathbb{E}\bigg(\frac{X_{n, j}^{2}}{1+X_{n, j}^{2}}\bigg)\bigg\}.
\]
$({\bf A})\Longrightarrow ({\bf C}):$ For every $\epsilon >0,$ let us discuss the following inequalities:
\[
\begin{split}
\max\limits_{1\leq j\leq k_{n}}|f_{n, j}(t)-1| &=\max\limits_{1\leq j\leq k_{n}}\bigg|\int\limits_{-\infty}^{+\infty}(e^{itx}-1)dF_{n, j}(x)\bigg|\\
&\leq \max\limits_{1\leq j\leq k_{n}}\int\limits_{|x|<\epsilon}\bigg|e^{itx}-1\bigg|dF_{n, j}(x)+2\max\limits_{1\leq j\leq k_{n}}\int\limits_{|x|\geq \epsilon}dF_{n, j}(x).
\end{split}
\]
Taking account of the inequality $|e^{iz}-1|\leq |z|$ valid for $z\in (-\infty, +\infty),$ it follows that
\[
\max\limits_{1\leq j\leq k_{n}}|f_{j}(t)-1| \leq \epsilon |t|+2\max\limits_{1\leq j\leq k_{n}}P(|X_{n, j}|\geq \epsilon).
\]
According to \cite{Gut2005}, we can deduced that
 \[
\mathbb{E}\bigg(\max\limits_{1\leq j\leq \nu_{n}}|f_{j}(t)-1|\bigg) \leq \epsilon |t|+2\mathbb{E}\bigg(\max\limits_{1\leq j\leq \nu_{n}}P(|X_{n, j}|\geq \epsilon)\bigg).
\]
Applying to (A), since $\epsilon>0$ is chosen arbitrarily small, the last inequality finishes the proof.
\end{proof}
\begin{remark}\label{rem:4.2}\quad If $\nu_{n}\stackrel{a.s}{=}k_{n}$ for $n\geq 1,$ then the above statements in  Proposition (\ref{pro:4.4}) deduce the results in \cite{Gnedenko1949} (Theorem 1, Pages 95--96). 
\end{remark}
\begin{proposition}\label{pro:4.5}\quad The random condition of asymptotic infinitesimality (RI) is deduced by the random Feller condition (RF), i.e., 
\[
(RF)\Longrightarrow (RI).
\]
\end{proposition}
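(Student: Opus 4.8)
The plan is to randomize the proof of Proposition \ref{pro:2.2}, which established $(F)\Longrightarrow(I)$ through a single application of Chebyshev's inequality. The quantities $P(|X_{n,j}|\geq\epsilon)$ and $\sigma^{2}_{n,j}$ are deterministic numbers, so the only genuinely new feature is that the upper index of the maximum is now the random variable $\nu_{n}$. I therefore intend to establish a termwise inequality between the two maxima, valid for every fixed deterministic upper bound, and then integrate over $\nu_{n}$.

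First I would fix $\epsilon>0$ and recall Chebyshev's inequality in the form
\[
P(|X_{n,j}|\geq\epsilon)\leq\frac{\sigma^{2}_{n,j}}{\epsilon^{2}},\qquad n\geq1,\ 1\leq j\leq k_{n}.
\]
Since this holds for every index $j$ simultaneously, taking the maximum over $1\leq j\leq m$ for any fixed $m$ gives
\[
\max\limits_{1\leq j\leq m}P(|X_{n,j}|\geq\epsilon)\leq\frac{1}{\epsilon^{2}}\max\limits_{1\leq j\leq m}\sigma^{2}_{n,j}.
\]
Next I would replace the deterministic bound $m$ by the random index $\nu_{n}$, so that both sides become functions of $\nu_{n}$ alone, and take expectations. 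Following \cite{Gut2005} and the total-probability decomposition $\mathbb{E}\{g(\nu_{n})\}=\sum_{k_{n}=1}^{\infty}P(\nu_{n}=k_{n})\,g(k_{n})$ used throughout the paper, weighting the displayed inequality by $P(\nu_{n}=k_{n})$ and summing yields
\[
\mathbb{E}\bigg\{\max\limits_{1\leq j\leq\nu_{n}}P(|X_{n,j}|\geq\epsilon)\bigg\}\leq\frac{1}{\epsilon^{2}}\,\mathbb{E}\bigg\{\max\limits_{1\leq j\leq\nu_{n}}\sigma^{2}_{n,j}\bigg\}.
\]

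Finally, applying the random Feller condition (RF) to the right-hand side shows that it is $o(1)$ as $n\to\infty$, and since $\epsilon>0$ was arbitrary this is exactly (RI). The argument is essentially routine; the only point requiring a little care --- the ``hard part'', such as it is --- is justifying the passage from a fixed upper index $m$ to the random index $\nu_{n}$ inside the maximum and under the expectation. This is handled cleanly by the fact that the inequality holds termwise in $k_{n}$, so that the weighted summation against $P(\nu_{n}=k_{n})$ preserves it, which is precisely why the conditioning identity from \cite{Gut2005} (together with the independence of $\nu_{n}$ from $\{X_{n,j}\}$) is invoked.
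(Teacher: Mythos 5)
Your proof is correct and follows essentially the same route as the paper: Chebyshev's inequality applied termwise, a maximum over the (random) index range, and an expectation over the law of $\nu_{n}$, followed by the random Feller condition (RF). The only cosmetic difference is that you spell out the summation against $P(\nu_{n}=k_{n})$ explicitly (where, since $P(|X_{n,j}|\geq\epsilon)$ and $\sigma^{2}_{n,j}$ are deterministic numbers, the maxima are plain functions of $\nu_{n}$ and even the independence assumption is not strictly needed for this step), whereas the paper states the expected inequality directly.
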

\begin{proof}
By Chebyshev's inequality, for any $\epsilon>0,$ we have
\[
\mathbb{E}\bigg\{\max\limits_{1\leq j\leq \nu_{n}}P(|X_{n, j}|\geq \epsilon)\bigg\}\leq \frac{1}{\epsilon^{2}}\mathbb{E}\bigg\{\max\limits_{1\leq j\leq \nu_{n}}\sigma^{2}_{n, j}\bigg\}.
\]
Applying the (RF), we obtain the complete proof.
\end{proof}
Moreover, there is a condition stronger than (RL), named the random Lyapunov condition, which is presented in the following definition:
 \begin{definition}\label{def:4.5}\quad  The double sequence  $\{X_{n, j}\}$ is said to satisfy the random Lyapunov condition if
\begin{equation*}
\mathbb{E}\bigg\{\sum\limits_{j=1}^{\nu_{n}}\mathbb{E}\bigg(|X_{n, j}|^{2+\delta}\bigg)\bigg\}=o(1) \quad\text{as}\quad n\to\infty, \tag{R$\Lambda$}
\end{equation*}
for $\delta\in (0, 1].$  
\end{definition}
\begin{proposition}\label{pro:4.6}\quad The random condition Lyapunov (R$\Lambda$) follows the random Lindeberg condition (RL), that is, 
 \[
(R\Lambda)\Longrightarrow (RL).
\]
\end{proposition}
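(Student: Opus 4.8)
The plan is to mirror the non-random argument of Proposition \ref{pro:2.5}, appending the outer expectation over the random index $\nu_n$ at the very end. Everything rests on one pointwise inequality: for a fixed $\epsilon>0$ and $\delta\in(0,1]$, on the event $\{|X_{n,j}|\geq\epsilon\}$ we have $|X_{n,j}|^{\delta}\geq\epsilon^{\delta}$, so that $X^{2}_{n,j}\mathbb{I}(|X_{n,j}|\geq\epsilon)\leq\epsilon^{-\delta}|X_{n,j}|^{2+\delta}$. This is precisely the bound already recorded in inequality (\ref{equ:2.2}), and it requires no new idea.

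First I would fix $\epsilon>0$ and $\delta\in(0,1]$. For each fixed integer $k_{n}$, taking expectations in the pointwise bound and summing over $1\leq j\leq k_{n}$ gives
\[
\sum_{j=1}^{k_{n}}\mathbb{E}\bigl[X^{2}_{n,j}\mathbb{I}(|X_{n,j}|\geq\epsilon)\bigr]\leq\frac{1}{\epsilon^{\delta}}\sum_{j=1}^{k_{n}}\mathbb{E}\bigl(|X_{n,j}|^{2+\delta}\bigr).
\]
Because this deterministic-index inequality holds simultaneously for every possible value $k_{n}$ of $\nu_{n}$, and since $\nu_{n}$ is independent of the array $\{X_{n,j}\}$, I may apply the outer expectation $\mathbb{E}\{\cdot\}$ with the random upper limit $\nu_{n}$ replacing $k_{n}$; monotonicity of the expectation preserves the inequality term by term and yields exactly inequality (\ref{equ:4.1}),
\[
\mathbb{E}\Bigl\{\sum_{j=1}^{\nu_{n}}\mathbb{E}\bigl[X^{2}_{n,j}\mathbb{I}(|X_{n,j}|\geq\epsilon)\bigr]\Bigr\}\leq\frac{1}{\epsilon^{\delta}}\,\mathbb{E}\Bigl\{\sum_{j=1}^{\nu_{n}}\mathbb{E}\bigl(|X_{n,j}|^{2+\delta}\bigr)\Bigr\}.
\]

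Finally I would invoke the hypothesis $(R\Lambda)$ from Definition \ref{def:4.5}: the right-hand side is the constant $\epsilon^{-\delta}$ times the quantity that is assumed to be $o(1)$ as $n\to\infty$, so the left-hand side is $o(1)$ as well, which is exactly the random Lindeberg condition $(RL)$. I do not anticipate any genuine obstacle, as this proposition coincides with Proposition \ref{pro:4.1} and the identical estimate (\ref{equ:4.1}) can simply be quoted. The only point deserving a word of care is the passage from the deterministic-index inequality to the random-index one, which is legitimate precisely because the bound holds for every admissible value of $\nu_{n}$ and the independence of $\nu_{n}$ lets the outer expectation act without disturbing the inner expectations.
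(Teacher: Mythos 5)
Your proof is correct and follows essentially the same route as the paper's: the pointwise bound $X^{2}_{n,j}\mathbb{I}(|X_{n,j}|\geq\epsilon)\leq\epsilon^{-\delta}|X_{n,j}|^{2+\delta}$ from inequality (\ref{equ:2.2}), summed and wrapped in the outer expectation over $\nu_{n}$ to give (\ref{equ:4.1}), then the hypothesis $(R\Lambda)$. Your extra care in justifying the passage from the fixed index $k_{n}$ to the random index $\nu_{n}$ via independence, and your observation that this proposition merely restates Proposition \ref{pro:4.1}, are both accurate refinements of what the paper leaves implicit.
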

\begin{proof}
For  $\epsilon >0$ and $\delta\in (0,1],$ since $|X_{n, j}|\geq \epsilon$ implies $|X_{n, j}|^{\delta}\geq {\epsilon}^{\delta}.$ \\
Therefore, it is easily follows that
\[
\mathbb{E}\bigg\{\sum\limits_{j=1}^{\nu_{n}}\mathbb{E}\bigg(X^{2}_{n, j}\mathbb{I}(|X_{n, j}|\geq \epsilon) \bigg)\bigg\}\leq \frac{1}{\epsilon^{\delta}}\mathbb{E}\bigg\{\sum\limits_{j=1}^{\nu_{n}}\mathbb{E}\bigg(X^{2+\delta}_{n, j}\mathbb{I}(|X_{n, j}|\geq \epsilon) \bigg)\bigg\}\leq \frac{1}{\epsilon^{\delta}}\mathbb{E}\bigg\{\sum\limits_{j=1}^{\nu_{n}}\mathbb{E}\bigg(X^{2+\delta}_{n, j} \bigg)\bigg\}.
\]
This shows that if  the random Lyapunov condition (R$\Lambda$) is satisfied, the random Lindeberg condition (RL) is also valid.
\end{proof}
A typical theorem for the non-classical situation is the random Rotar theorem with the random Rotar condition, defined below. 
 \begin{definition}\label{def:4.6}\quad  The double sequence  $\{X_{n, j}\}$ is said to satisfy the random 
 Rotar condition if for any $\epsilon >0$
\begin{equation*}
\mathbb{E}\bigg\{\sum\limits_{j=1}^{\nu_{n}}\int\limits_{|x|>\epsilon}|x||F_{n, j}(x)-\Phi_{n, j}(x)|dx\bigg\}=o(1)\quad\text{as}\quad n\to\infty.\tag{RR}
\end{equation*}
\end{definition}
The next statement  shows that the  random Rotar condition (RR) is deduced by the random Lindeberg condition (RL). 
\begin{proposition}\label{pro:4.7}\quad The random condition Lindeberg (RL) follows the random Rotar condition (RR), that is, 
 \[
(RL)\Longrightarrow (RR).
\]
\end{proposition}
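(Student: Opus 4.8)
The plan is to randomize the non-random implication $(L)\Longrightarrow(R)$ established in Proposition \ref{pro:2.6}, by conditioning on the random index $\nu_n$ and then averaging. First I would fix a value $\nu_n=k_n$ and apply the Formanov estimate (\ref{equ:2.3}) with $s=2$ to the deterministic partial sum, obtaining
\[
\sum_{j=1}^{k_n}\int_{|x|>\epsilon}|x|\,|F_{n,j}(x)-\Phi_{n,j}(x)|\,dx\leq C\bigg(\sum_{j=1}^{k_n}\int_{|x|>\epsilon}x^2\,dF_{n,j}(x)+\max_{1\leq j\leq k_n}\sigma^2_{n,j}\bigg),
\]
where the bound $\sum_{j=1}^{k_n}\sigma^4_{n,j}\leq\max_{1\leq j\leq k_n}\sigma^2_{n,j}$ uses $\sum_{j=1}^{k_n}\sigma^2_{n,j}=1$ exactly as in the proof of Proposition \ref{pro:2.6}.

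Next I would take the expectation with respect to $\nu_n$. Since $\nu_n$ is independent of the double sequence $\{X_{n,j}\}$, the averaging argument of \cite{Gut2005} (the same device used in Propositions \ref{pro:3.5} and \ref{pro:4.3}) gives
\[
\mathbb{E}\bigg\{\sum_{j=1}^{\nu_n}\int_{|x|>\epsilon}|x|\,|F_{n,j}(x)-\Phi_{n,j}(x)|\,dx\bigg\}\leq C\bigg(\mathbb{E}\bigg\{\sum_{j=1}^{\nu_n}\int_{|x|>\epsilon}x^2\,dF_{n,j}(x)\bigg\}+\mathbb{E}\bigg\{\max_{1\leq j\leq\nu_n}\sigma^2_{n,j}\bigg\}\bigg).
\]
The first term on the right-hand side is, after rewriting $\int_{|x|>\epsilon}x^2\,dF_{n,j}(x)=\mathbb{E}[X^2_{n,j}\mathbb{I}(|X_{n,j}|>\epsilon)]$, precisely the expression in the random Lindeberg condition (RL), while the second term is exactly the expression in the random Feller condition (RF).

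Finally I would conclude by invoking the two facts already available: the first term is $o(1)$ directly by hypothesis (RL), and the second term is $o(1)$ because (RF) holds, which in turn follows from (RL) by Proposition \ref{pro:4.2}. Adding these gives $o(1)$, which is the random Rotar condition (RR). The main obstacle is the justification of the conditioning/averaging step: one must check that the Formanov constant $C=C(\epsilon)$ does not depend on $k_n$ and that the bound $\sum_{j=1}^{k_n}\sigma^4_{n,j}\leq\max_j\sigma^2_{n,j}$ remains valid for every realization $\nu_n=k_n$ (which is where the normalization $\sum_j\sigma^2_{n,j}=1$ is used); once this uniformity is secured, the interchange of $\mathbb{E}\{\cdot\}$ with the inequality is routine.
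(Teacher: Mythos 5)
Your proposal is correct and follows essentially the same route as the paper's own proof: randomize the Formanov estimate (\ref{equ:2.3}) by taking expectations over $\nu_{n}$, then kill the first term by (RL) and the second by (RF), which follows from (RL) via Proposition \ref{pro:4.2}. Your explicit treatment of the conditioning/averaging step and the uniformity of $C(\epsilon)$ is more careful than the paper's, which simply asserts the randomized inequality, but the argument is the same.
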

\begin{proof}
According to inequality (\ref{equ:2.3}), for some $C=C(\epsilon)>0$ and for any $\epsilon>0.$ we have
\begin{equation}\label{equ:4.4}
\mathbb{E}\bigg\{\sum\limits_{j=1}^{\nu_{n}}\int\limits_{|x|>\epsilon}|x||F_{n, j}(x)-\Phi_{n, j}(x)|dx\bigg\}
\leq C\bigg\{\mathbb{E}\bigg(\sum\limits_{j=1}^{\nu_{n}}\int\limits_{|x|> \epsilon}x^{2}dF_{n, j}(x)\bigg)+\mathbb{E}\bigg(\sup\limits_{1\leq j \leq \nu_{n}}\sigma^{2}_{n, j}\bigg)\bigg\}.
\end{equation}
Applying the random condition Lindeberg (RL), the first term on the right side of inequality (\ref{equ:4.4}) will vanish as $n\to\infty.$ This shows that the second term also tends to zero when $n\to\infty$ since the random Feller condition (RF) is followed by the random Lindeberg condition (RL). The proof is complete.
\end{proof}

\section{Central limit theorems for  random sums}\label{sec:5}
In this section, some typical central limit theorems for random sums of independent random variables of a double sequence like Lyapunov's, Lindeberg's, Feller's, and Rotar's theorems will be investigated.
\begin{theorem}\label{the:5.1}(Random Lindeberg theorem)\quad Suppose that the sequence $\{X_{n, j}\}$ is satisfied the random Lindeberg condition (RL), and assume that the random Feller condition holds (RF) for  the sequence $\{X^{*}_{n, j}\}$ of independent (in each row) normally distributed random variables which has zero means and finite variances $\sigma^{2}_{n, j}\in (0, \infty).$ Moreover, assume that the sequence $(\nu_{n}, n\geq 1)$ of  integer-valued, positive random variables, independent of all $X_{n, j}$ and $X^{*}_{n, j}$ for $1\leq j\leq k_{n}$ and $n\geq1.$   Furthermore, suppose that $\nu_{n}\stackrel{P}{\longrightarrow}\infty$ when $n\to\infty.$ Then, the central limit theorem for random sum (RCLT) is valid for the double sequence $\{X_{n, j}\},$ that is,  
\begin{equation*}
{\Delta_{n, \nu_{n}}}=\sup\limits_{x\in\mathbb{R}}\bigg|P\bigg(\sum\limits_{j=1}^{\nu_{n}}X_{n, j}<x \bigg)-\Phi_{0,1}(x) \bigg|=o(1), \quad\text{as}\quad n\to\infty.
\end{equation*}
\end{theorem}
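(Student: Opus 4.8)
The plan is to prove the random central limit theorem through the Zolotarev distance $\zeta_s$, exploiting its ideal (semi-additive) structure rather than manipulating distribution functions directly. By Proposition \ref{pro:3.2} it suffices to exhibit some $s>0$ for which $\zeta_s(S_{n,\nu_n}, X^*)=o(1)$ as $n\to\infty$; this replaces the supremum-norm statement about $\Delta_{n,\nu_n}$ by a single metric estimate. I would fix $s\in(2,3]$, so that $r=2$ and the test functions in $\mathcal{D}_s$ have an $(s-2)$-H\"older second derivative. This is the range compatible with matching the first two moments of each summand with those of its Gaussian companion, and it is the feature that will make the low-order Taylor terms cancel.

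First I would invoke Lemma \ref{lem:3.1} to write $X^*\stackrel{D}{=}\sum_{j=1}^{\nu_n}X^*_{n,j}$, where the $X^*_{n,j}\stackrel{D}{\sim}\mathcal{N}(0,\sigma^2_{n,j})$ are independent and independent of $\nu_n$. Then $S_{n,\nu_n}=\sum_{j=1}^{\nu_n}X_{n,j}$ and $\sum_{j=1}^{\nu_n}X^*_{n,j}$ are random sums whose summands have matching means ($0$) and matching variances ($\sigma^2_{n,j}$). Applying the semi-additivity of $\zeta_s$ for random sums (Proposition \ref{pro:3.5}) gives
\begin{equation*}
\zeta_s(S_{n,\nu_n}, X^*)\leq \sum_{k_n=1}^{\infty}P(\nu_n=k_n)\sum_{j=1}^{k_n}\zeta_s\big(X_{n,j}, X^*_{n,j}\big)=\mathbb{E}\bigg\{\sum_{j=1}^{\nu_n}\zeta_s\big(X_{n,j},X^*_{n,j}\big)\bigg\}.
\end{equation*}
Everything is thereby reduced to controlling the individual one-dimensional distances $\zeta_s(X_{n,j},X^*_{n,j})$ and showing that their $\nu_n$-averaged sum is negligible.

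The heart of the argument, and the step I expect to be the main obstacle, is bounding each $\zeta_s(X_{n,j},X^*_{n,j})$ under the Lindeberg hypothesis alone, that is, without assuming finite $(2+\delta)$-moments. The idea is a truncation at the Lindeberg level $\epsilon$: expanding a test function $f\in\mathcal{D}_s$ by Taylor's formula to second order and using that the first two moments of $X_{n,j}$ and $X^*_{n,j}$ coincide, the zeroth-, first-, and second-order contributions cancel, and the remainder is controlled by the $(s-2)$-H\"older bound on $f''$. Splitting the resulting integral at $|x|=\epsilon$, the inner region contributes a term of order $\epsilon^{s-2}\sigma^2_{n,j}$ (because $\int_{|x|<\epsilon}|x|^s\,dF\leq\epsilon^{s-2}\sigma^2_{n,j}$), while the outer region is dominated by the truncated second moments of the two laws. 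This should yield an estimate of the shape
\begin{equation*}
\zeta_s\big(X_{n,j},X^*_{n,j}\big)\leq C\Big(\epsilon^{s-2}\sigma^2_{n,j}+\int_{|x|>\epsilon}x^2\,dF_{n,j}(x)+\mathbb{E}\big[(X^*_{n,j})^2\mathbb{I}(|X^*_{n,j}|\geq\epsilon)\big]\Big).
\end{equation*}

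Finally I would insert this estimate into the averaged sum. Summing over $j$ and averaging over $\nu_n$, the first group of terms is $O(\epsilon^{s-2})$ uniformly in $n$, since $\sum_{j=1}^{k_n}\sigma^2_{n,j}=1$; the second group is exactly the random Lindeberg quantity (RL); and the third group is the random Lindeberg quantity for the Gaussian array $\{X^*_{n,j}\}$, which vanishes by Proposition \ref{pro:4.3} once the random Feller condition (RF) is assumed for that array. Letting $n\to\infty$ first annihilates the two (RL)-type terms, and then letting $\epsilon\to0$ kills the $O(\epsilon^{s-2})$ remainder; hence $\zeta_s(S_{n,\nu_n},X^*)=o(1)$, and Proposition \ref{pro:3.2} delivers (RCLT). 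The independence of $(\nu_n,n\geq1)$ from both arrays, together with $\nu_n\stackrel{P}{\longrightarrow}\infty$, is what makes Lemma \ref{lem:3.1} and Proposition \ref{pro:3.5} applicable and keeps the averaging consistent with the array normalization.
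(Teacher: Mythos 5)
Your proposal follows essentially the same route as the paper's own proof: reduction via Proposition \ref{pro:3.2} and Lemma \ref{lem:3.1} to the Zolotarev distance between the two random sums, semi-additivity over the random index (the paper uses Proposition \ref{pro:3.3} together with averaging over $P(\nu_{n}=k_{n})$, which is exactly your Proposition \ref{pro:3.5} bound), a second-order Taylor expansion exploiting the matching of means and variances with truncation at level $\epsilon$, and finally the hypothesis (RL) for $\{X_{n,j}\}$ combined with (RF) and Proposition \ref{pro:4.3} for the Gaussian array $\{X^{*}_{n,j}\}$. The only difference is cosmetic: the paper fixes $s=3$ and controls the inner region by uniform continuity of $f''$ (a $\delta_{1}$--$\epsilon_{1}$ argument), whereas you take general $s\in(2,3]$ and use the H\"older bound to get $\epsilon^{s-2}\sigma^{2}_{n,j}$; both versions rely in the outer region on the boundedness $\|f^{(2)}\|$ of the test function, exactly as in the paper's estimate (\ref{equ:5.5}).
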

\begin{proof}
According to the Lemma \ref{lem:3.1} and Proposition \ref{pro:3.2}, the sufficient condition for the validity of the random central limit theorem (RCLT) is
\[
\zeta_{3}\bigg(\sum\limits_{j=1}^{\nu_{n}}X_{n, j}, \sum\limits_{j=1}^{\nu_{n}}X^{*}_{n, j}\bigg)=o(1) \quad\text{as}\quad n\to\infty.
\]
Let us first consider the Zolotarev distance of random variables $X_{n, j}, X^{*}_{n, j}\in\mathfrak{X}$
\[
\zeta_{3}(X_{n, j}, X^{*}_{n, j})=\sup\limits_{f\in\mathcal{D}_{3}}\bigg|\mathbb{E}f(X_{n, j})-\mathbb{E}f(X^{*}_{n, j}) \bigg|,
\]
where 
\[
\mathcal{D}_{3}=\bigg\{f\in C^{2}(\mathbb{R}): |f^{\prime\prime}(x)-f^{\prime\prime}(y)|
\leq |x-y| \bigg\}.
\]
By the  Taylor series for $f\in C^{2}(\mathbb{R}),$ with assumptions that $\mathbb{E} X_{n, j}=0$ and $\mathbb{E}X^{2}_{n, j}=\sigma^{2}_{n, j}\in (0, \infty),$ we conclude that 
\begin{equation}\label{equ:5.1}
\mathbb{E}f(X_{n, j})=f(0)+\frac{f^{\prime\prime}(0)}{2}\sigma^{2}_{n, j}+\frac{1}{2}\mathbb{E}\bigg(\bigg[f^{\prime\prime}(\eta_{1})-f^{\prime\prime}(0) \bigg]X^{2}_{n, j}\bigg),
\end{equation}
where $\eta_{1}=\theta_{1}x, \theta_{1}\in (0, 1).$ Since $f\in \mathcal{D}_{3},$ for any $\delta_{1}>0$ there exists $\epsilon_{1}>0$ such that $\bigg|f^{\prime\prime}(\eta_{1})-f^{\prime\prime}(0) \bigg| <\delta_{1}$ when $|x|<\epsilon_{1}$ and $\bigg|f^{\prime\prime}(\eta_{1})-f^{\prime\prime}(0) \bigg| \leq 2||f^{(2)}||$ when $|x|\geq \epsilon_{1}.$\\
By a similar argument, with assumption that $X^{*}_{n, j}\stackrel{D}{=}\sigma_{n, j}X^{*},$ we have
\begin{equation}\label{equ:5.2}
\mathbb{E}f(X^{*}_{n, j})=f(0)+\frac{f^{\prime\prime}(0)}{2}\sigma^{2}_{n, j}+\frac{1}{2}\mathbb{E}\bigg(\bigg[f^{\prime\prime}(\eta_{2})-f^{\prime\prime}(0) \bigg](\sigma_{n, j}X^{*})^{2}\bigg),
\end{equation}
where $\eta_{2}=\theta_{2}x, \theta_{2}\in (0, 1).$ Since $f\in \mathcal{D}_{3},$ for any $\delta_{2}>0$ there exists $\epsilon_{2}>0$ such that $\bigg|f^{\prime\prime}(\eta_{2})-f^{\prime\prime}(0) \bigg| <\delta_{2}$ when $|x|<\epsilon_{2}/\sigma_{n, j}$ and  and $\bigg|f^{\prime\prime}(\eta_{2})-f^{\prime\prime}(0) \bigg| \leq 2||f^{(2)}||$ when $|x|\geq \epsilon_{2}/\sigma_{n, j}.$\\
Combining (\ref{equ:5.1}) and (\ref{equ:5.2}),  we conclude that
\begin{equation}\label{equ:5.3}
\begin{split}
&\bigg|\mathbb{E}f(X_{n, j})-\mathbb{E}f(X^{*}_{n, j})\bigg| \leq \frac{1}{2}\mathbb{E}\bigg(\bigg|f^{\prime\prime}(\eta_{1})-f^{\prime\prime}(0) \bigg| X^{2}_{n, j}\bigg)+\frac{1}{2}\mathbb{E}\bigg(\bigg|f^{\prime\prime}(\eta_{2})-f^{\prime\prime}(0) \bigg|(\sigma_{n, j}X^{*})^{2}\bigg)\\
& \leq \frac{1}{2}\bigg|f^{\prime\prime}(\eta_{1})-f^{\prime\prime}(0)\bigg|\mathbb{E}X^{2}_{n, j}+\frac{1}{2}\sigma^{2}_{n, j}\bigg|f^{\prime\prime}(\eta_{2})-f^{\prime\prime}(0)\bigg|\mathbb{E}( X^{*})^{2}\\
&\leq \frac{1}{2} \delta_{1}\sigma^{2}_{n, j} +||f^{(2)}||\int\limits_{|x|>\epsilon_{1}}x^{2}dF_{n, j}(x)
+\frac{1}{2} \delta_{2}\sigma^{2}_{n, j}+||f^{(2)}||\sigma^{2}_{n, j}\int\limits_{|x|>\epsilon_{2}/\sigma_{n, j}}x^{2}d\Phi_{0,1}(x)\\
&= \frac{1}{2} \delta \sigma^{2}_{n, j}+||f^{(2)}||\int\limits_{|x|>\epsilon_{1}}x^{2}dF_{n, j}(x)
+||f^{(2)}||\sigma^{2}_{n, j}\int\limits_{|x|>\epsilon_{2}/\sigma_{n, j}} x^{2}d\Phi_{0,1}(x) ,
\end{split}
\end{equation}
where $\delta=\frac{\delta_{1}+\delta_{2}}{2}>0,\epsilon_{1}>0, \epsilon_{2}>0.$\\
Summing over j  both sides of the (\ref{equ:5.3}), using the suggestion $\sum\limits_{j=1}^{k_{n}}\sigma^{2}_{n, j}=1,$ it follows that
\begin{equation}\label{equ:5.4}
\sum\limits_{j=1}^{k_{n}}\bigg|\mathbb{E}f(X_{n, j})-\mathbb{E}f(X^{*}_{n, j})\bigg| \leq 
 \frac{1}{2} \delta +||f^{(2)}||\sum\limits_{j=1}^{k_{n}}\int\limits_{|x|>\epsilon_{1}}x^{2}dF_{n, j}(x)
+||f^{(2)}||\int\limits_{|x|>\epsilon_{2}/\sigma_{n, j}} x^{2}d\Phi_{0,1}(x).
\end{equation}
Taking expectations  and multiplying $\sum\limits_{j=k_{n}}^{\infty}P(\nu_{n}=k_{n})$ both sides of (\ref{equ:5.4}), in view of Proposition \ref{pro:3.3}, we have
\begin{equation}\label{equ:5.5}
\begin{split}
\zeta_{3}\bigg(\sum\limits_{j=1}^{\nu_{n}}X_{n, j}, X^{*}\bigg)=\zeta_{3}\bigg(\sum\limits_{j=1}^{\nu_{n}}X_{n, j}, \sum\limits_{j=1}^{\nu_{n}}X^{*}_{n, j}\bigg)&\leq\frac{1}{2}\delta +||f^{(2)}||\mathbb{E}\bigg\{\sum\limits_{j=1}^{\nu_{n}}\int\limits_{|x|>\epsilon_{1}}x^{2}dF_{n, j}(x)\bigg\}\\
 &+||f^{(2)}||\mathbb{E}\bigg\{\int\limits_{|x|>\epsilon_{2}/\sigma^{*}}x^{2}d\Phi_{0,1}(x)  \bigg\},
\end{split}
\end{equation}
where $\epsilon_{1}>0, \epsilon_{1}>0, \delta>0$ and $\sigma^{*}=\max\limits_{1\leq j\leq \nu_{n}}\sigma_{n, j}.$ \\
Since $\delta>0$ is an arbitrary small, $||f^{(2)}||$ is bounded, the second term in the right side of inequality (\ref{equ:5.5})  vanishes when $n\longrightarrow\infty$ since the random Lindeberg condition is satisfied for the double sequence $\{X_{n, j}\}.$ The last term tends to zero under the random Feller condition is valid for the sequence $\{X^{*}_{n, j}\}$ in view of the Proposition \ref{pro:4.3}. The proof is finished.
\end{proof}
Next statement is  the random Lyapunov central limit theorem, whose proof is deduced from the Theorem \ref{the:5.1}.
\begin{theorem}\label{the:5.2}(Random Lyapunov's theorem) \quad Suppose that, for $n\geq 1,$ the sequence $\{X_{n, j}\}$ is satisfied the random Lyapunov condition (R$\Lambda$), and assume that for  the sequence $\{X^{*}_{n, j}\}$ of independent normal distributed random variables which has zero means and finite variances $\sigma^{2}_{n, j}\in (0, \infty),$  the random Feller condition holds. Moreover, we assume that the sequence  $(\nu_{n}, n\geq 1)$ of integer-valued, positive random variables, independent of all $X_{n, j}$ and $X^{*}_{n, j}$ for $1\leq j\leq n$ and $n\geq1.$  Furthermore, suppose that $\nu_{n}\stackrel{P}{\longrightarrow}\infty$ when $n\to\infty.$ Then, the central limit theorem for random sum (RCLT) is valid for the double sequence $\{X_{n, j}\},$ i.e.,  
\begin{equation*}
{\Delta_{n, \nu_{n}}}=\sup\limits_{x\in\mathbb{R}}\bigg|P\bigg(\sum\limits_{j=1}^{\nu_{n}}X_{n, j} <x\bigg)-\Phi_{0,1}(x) \bigg|=o(1) \quad\text{as}\quad n\to\infty.
\end{equation*}
\end{theorem}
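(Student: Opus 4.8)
The plan is to obtain Theorem~\ref{the:5.2} as an immediate corollary of the random Lindeberg theorem (Theorem~\ref{the:5.1}), exploiting the fact that the random Lyapunov condition is strictly stronger than the random Lindeberg condition. Concretely, I would first invoke Proposition~\ref{pro:4.6} (equivalently Proposition~\ref{pro:4.1}), which establishes the implication $(R\Lambda)\Longrightarrow(RL)$. Thus, under the standing hypothesis that $\{X_{n,j}\}$ satisfies $(R\Lambda)$, the double sequence automatically satisfies the random Lindeberg condition $(RL)$.

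Next I would check that every remaining hypothesis of Theorem~\ref{the:5.1} is already present in the statement of Theorem~\ref{the:5.2}. Indeed, the random Feller condition $(RF)$ for the normal sequence $\{X^{*}_{n,j}\}$ is assumed verbatim, and the index sequence $(\nu_n, n\geq 1)$ is likewise assumed to be integer-valued, positive, independent of all $X_{n,j}$ and $X^{*}_{n,j}$, with $\nu_n\stackrel{P}{\longrightarrow}\infty$ as $n\to\infty$. Hence all hypotheses of Theorem~\ref{the:5.1} hold, and that theorem yields $\Delta_{n,\nu_n}=o(1)$ as $n\to\infty$, which is exactly the desired conclusion. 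This short reduction is, I expect, the intended argument, since the entire analytic content---the Taylor expansion of $f\in\mathcal{D}_3$, the Zolotarev-distance estimate, and the semi-additivity of $\zeta_3$---has already been carried out once in the proof of Theorem~\ref{the:5.1}.

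For completeness I would note the alternative of giving a self-contained proof that mirrors Theorem~\ref{the:5.1} directly: one would repeat the bound for $\zeta_3(X_{n,j},X^{*}_{n,j})$, but in place of the Lindeberg truncation one would control the remainder term $\tfrac12\mathbb{E}(|f''(\eta_1)-f''(0)|\,X^{2}_{n,j})$ by an estimate of the form $\tfrac12\|f^{(2)}\|\,\epsilon^{-\delta}\,\mathbb{E}|X_{n,j}|^{2+\delta}$, using $|X_{n,j}|^{\delta}\geq\epsilon^{\delta}$ on $\{|X_{n,j}|\geq\epsilon\}$ as in inequality~(\ref{equ:2.2}); after summing over $j$ and applying $\mathbb{E}\{\sum_{j=1}^{\nu_n}(\cdot)\}$, the tail term vanishes directly by $(R\Lambda)$. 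Since this introduces no genuinely new obstacle, the reduction via Proposition~\ref{pro:4.6} is cleaner, and the only point that truly requires care is confirming that the hypotheses transfer exactly---in particular that the random Feller assumption is imposed on $\{X^{*}_{n,j}\}$ rather than on $\{X_{n,j}\}$, which is precisely what allows Proposition~\ref{pro:4.3} to close the normal tail term inside Theorem~\ref{the:5.1}.
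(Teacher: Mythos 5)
Your proposal is correct and follows exactly the paper's own argument: the paper proves Theorem~\ref{the:5.2} by citing inequality~(\ref{equ:4.1}) to get $(R\Lambda)\Longrightarrow(RL)$ and then applying Theorem~\ref{the:5.1} to conclude $(RL)\Longrightarrow(RCLT)$. Your additional check that the remaining hypotheses (the random Feller condition on $\{X^{*}_{n,j}\}$ and the assumptions on $\nu_n$) transfer verbatim is a useful bit of care that the paper leaves implicit.
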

\begin{proof}
The proof is deduced by the inequality (\ref{equ:4.1}), that is, since the following implications:
\[
(R\Lambda)\Longrightarrow (RL) \Longrightarrow (RCLT), \quad\text{as}\quad n\to\infty,
\]
are valid, for any $\epsilon>0$ and $\delta\in (0, 1].$
\end{proof}
Based on the fact that the random Lindeberg condition is automatically satisfied for the double sequence of i.i.d. (in each row) random variables, the following statement is a direct corollary of the Theorem \ref{the:5.1}.
\begin{corollary}\label{cor:5.1}\quad The random central limit theorem (RCLT) holds for the sequence of independent identically distributed random variables.
\end{corollary}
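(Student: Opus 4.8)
The plan is to show that the i.i.d.\ hypothesis forces both the random Lindeberg condition (RL) for $\{X_{n,j}\}$ and the random Feller condition (RF) for the accompanying normal array $\{X^{*}_{n,j}\}$, so that Theorem \ref{the:5.1} applies verbatim and returns the (RCLT). First I would record what ``i.i.d.\ in each row'' means under the array conditions (A)--(C): since all $X_{n,j}$, $1\le j\le k_{n}$, share a single distribution in the $n$th row while $\sum_{j=1}^{k_{n}}\sigma^{2}_{n,j}=1$, the common variance must be $\sigma^{2}_{n,j}=1/k_{n}$ for every $j$. Because $k_{n}\to\infty$, this already yields $\sigma^{2}_{n,j}\to 0$ uniformly in $j$, which is the feature that will make both conditions free.

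Next I would dispatch (RF) for $\{X^{*}_{n,j}\}$, whose variances coincide with $\sigma^{2}_{n,j}=1/k_{n}$. Here $\max_{1\le j\le\nu_{n}}\sigma^{2}_{n,j}=1/k_{n}$ is constant (independent of the realization of $\nu_{n}$), so
\[
\mathbb{E}\Big\{\max_{1\le j\le\nu_{n}}\sigma^{2}_{n,j}\Big\}=\frac{1}{k_{n}}=o(1)\quad\text{as }n\to\infty,
\]
and (RF) holds. This is exactly the hypothesis Theorem \ref{the:5.1} imposes on $\{X^{*}_{n,j}\}$, which it then exploits through Proposition \ref{pro:4.3}.

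The heart of the argument is (RL) for $\{X_{n,j}\}$, and this is where I expect the only genuine work. By the i.i.d.\ property the inner truncated second moments are all equal to one number $a_{n}:=\mathbb{E}\big[X^{2}_{n,1}\mathbb{I}(|X_{n,1}|>\epsilon)\big]$, so the quantity inside the outer expectation in (RL) is nonnegative and, because the random sum draws only on the $k_{n}$ summands available in row $n$ (hence $\nu_{n}\le k_{n}$), is pathwise dominated by the full-row sum:
\[
\sum_{j=1}^{\nu_{n}}\mathbb{E}\big[X^{2}_{n,j}\mathbb{I}(|X_{n,j}|>\epsilon)\big]\le\sum_{j=1}^{k_{n}}\mathbb{E}\big[X^{2}_{n,j}\mathbb{I}(|X_{n,j}|>\epsilon)\big]=k_{n}a_{n}.
\]
Taking expectations, the right-hand side is deterministic, so (RL) is bounded by the ordinary Lindeberg sum $k_{n}a_{n}$. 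Representing $X_{n,1}$ as the scaling $Y/\sqrt{k_{n}}$ of a fixed mean-zero, unit-variance variable $Y$ gives $k_{n}a_{n}=\mathbb{E}\big[Y^{2}\mathbb{I}(|Y|>\epsilon\sqrt{k_{n}})\big]$, which tends to $0$ by dominated convergence since $\mathbb{E}Y^{2}=1<\infty$ and $\epsilon\sqrt{k_{n}}\to\infty$. Thus (RL) holds automatically, precisely the fact announced before the corollary. The main obstacle is therefore not a sharp estimate but the bookkeeping that legitimises replacing the random upper limit $\nu_{n}$ by the deterministic $k_{n}$; once $\nu_{n}\le k_{n}$ (or, more generally, $\mathbb{E}\nu_{n}=O(k_{n})$) is invoked, classical dominated convergence closes the step.

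Finally, with (RL) established for $\{X_{n,j}\}$, (RF) established for $\{X^{*}_{n,j}\}$, the independence of $(\nu_{n},n\ge 1)$ from both arrays, and $\nu_{n}\stackrel{P}{\longrightarrow}\infty$ all in force, every hypothesis of Theorem \ref{the:5.1} is satisfied. Applying that theorem then gives $\Delta_{n,\nu_{n}}=o(1)$ as $n\to\infty$, i.e.\ the random central limit theorem (RCLT) holds, which completes the proof.
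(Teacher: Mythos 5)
Your proposal is correct and takes essentially the same route as the paper: the paper's entire proof is the remark immediately preceding the corollary, namely that the random Lindeberg condition (RL) is automatically satisfied for i.i.d.\ rows, so that Theorem \ref{the:5.1} applies directly. You simply supply the details the paper leaves implicit --- the computation $\sigma^{2}_{n,j}=1/k_{n}$, the dominated-convergence verification of (RL) via the scaling $X_{n,1}\stackrel{D}{=}Y/\sqrt{k_{n}}$, and (RF) for the accompanying normal array --- together with the honest caveat that replacing the random upper limit $\nu_{n}$ by $k_{n}$ requires $\nu_{n}\leq k_{n}$ (or $\mathbb{E}\nu_{n}=O(k_{n})$), a hypothesis the paper's one-line argument silently assumes as well.
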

\begin{theorem}\label{the:5.3}(Random Feller theorem)\quad The random Lindeberg condition is necessary condition for the random central limit theorem under the random Feller condition, that is,
\begin{equation*}
(RCLT)\quad\&\quad (RF) \Longrightarrow (RL)\quad\text{as}\quad n\to\infty.
\end{equation*}
\end{theorem}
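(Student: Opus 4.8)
The plan is to randomize the classical Feller argument behind Theorem \ref{the:2.3}, carrying every scalar quantity of that proof through the expectation $\mathbb{E}\{\sum_{k_n=1}^{\infty}P(\nu_n=k_n)(\cdots)\}$ over the independent random index $\nu_n$, exactly as in the Gut decomposition used in Section \ref{sec:3}. Throughout I would fix $\epsilon>0$ and put $t=3/\epsilon$. The role of the hypothesis (RF) is to supply asymptotic infinitesimality: by Proposition \ref{pro:4.5} we have $(RF)\Rightarrow(RI)$, hence by Proposition \ref{pro:4.4}(C) that $\mathbb{E}\{\max_{1\le j\le\nu_n}|f_{n,j}(t)-1|\}=o(1)$, and it is precisely this that will license the passage from products of characteristic functions to sums.

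First I would set up the elementary reduction that turns the random Lindeberg quantity into a characteristic-function statement. Using the zero-mean assumption, the bound $1-\cos u\le u^{2}/2$, and the pointwise inequality $\tfrac{t^{2}x^{2}}{2}-(1-\cos tx)\ge c_\epsilon\,x^{2}$ valid on $\{|x|>\epsilon\}$ for a constant $c_\epsilon>0$ (this is where the choice $t\asymp 1/\epsilon$ enters), one has for every realization
\[
\sum_{j=1}^{\nu_n}\int_{|x|>\epsilon}x^{2}\,dF_{n,j}(x)\le \frac{1}{c_\epsilon}\Bigl[\tfrac{t^{2}}{2}\sum_{j=1}^{\nu_n}\sigma^{2}_{n,j}-\sum_{j=1}^{\nu_n}\bigl(1-\mathrm{Re}\,f_{n,j}(t)\bigr)\Bigr].
\]
Taking expectations and invoking condition (C) in the form $\sum_{j=1}^{\nu_n}\sigma^{2}_{n,j}=1$, the task reduces to proving that $R_n:=\mathbb{E}\{\sum_{j=1}^{\nu_n}(1-\mathrm{Re}\,f_{n,j}(t))\}\to t^{2}/2$.

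To obtain this limit I would exploit (RCLT) through characteristic functions. By the continuity theorem, (RCLT) gives $\varphi_n(t):=\mathbb{E}\{\prod_{j=1}^{\nu_n}f_{n,j}(t)\}\to e^{-t^{2}/2}$, and applied to the symmetrized sums $\sum_{j=1}^{\nu_n}(X_{n,j}-X'_{n,j})$ (same index $\nu_n$, independent copy) it gives $\mathbb{E}\{\prod_{j=1}^{\nu_n}|f_{n,j}(t)|^{2}\}\to e^{-t^{2}}$. Writing $B_{n,\nu_n}(t)=\sum_{j=1}^{\nu_n}(1-\mathrm{Re}\,f_{n,j}(t))$, note $0\le B_{n,\nu_n}(t)\le \tfrac{t^{2}}{2}$ almost surely by (C). Applying the null-array estimate $|\prod(1+a_j)-\exp(\sum a_j)|\le(\max_j|a_j|)\sum|a_j|$ realization-by-realization, together with the uniform bound $\sum_{j=1}^{\nu_n}|f_{n,j}(t)-1|\le t^{2}/2$ and the infinitesimality $\mathbb{E}\{\max_j|f_{n,j}(t)-1|\}\to0$, the two products may be replaced by the exponentials of the corresponding sums with $L^{1}$-error $o(1)$; after the infinitesimal simplification $\sum_{j}(1-|f_{n,j}|^{2})=2B_{n,\nu_n}(t)+o(1)$ this yields $\mathbb{E}\{e^{-2B_{n,\nu_n}(t)}\}\to e^{-t^{2}}$.

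The final step is to pin down $R_n$ by a convexity squeeze. The almost-sure bound $B_{n,\nu_n}(t)\le t^{2}/2$ gives $\mathbb{E}\{e^{-B_{n,\nu_n}(t)}\}\ge e^{-t^{2}/2}$, while Jensen's inequality $\mathbb{E}\{e^{-2B_{n,\nu_n}(t)}\}\ge(\mathbb{E}\{e^{-B_{n,\nu_n}(t)}\})^{2}$ combined with the limit just obtained forces $\limsup_n\mathbb{E}\{e^{-B_{n,\nu_n}(t)}\}\le e^{-t^{2}/2}$; hence $\mathbb{E}\{e^{-B_{n,\nu_n}(t)}\}\to e^{-t^{2}/2}$. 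Since $e^{-B_{n,\nu_n}(t)}\ge e^{-t^{2}/2}$ almost surely and its mean converges to that lower value, $B_{n,\nu_n}(t)\to t^{2}/2$ in $L^{1}$, i.e. $R_n\to t^{2}/2$; substituting into the reduction above and letting $\epsilon\downarrow0$ completes the argument. I expect the middle step to be the main obstacle: unlike the non-random situation, one cannot apply Theorem \ref{the:2.3} slice-by-slice, and the naive product-to-sum passage is blocked by the oscillatory imaginary parts $\sum_j\mathrm{Im}\,f_{n,j}(t)$ and by the fact that $\log\mathbb{E}\ne\mathbb{E}\log$. The symmetrization, which kills the imaginary part, together with the two-sided squeeze furnished by Jensen and the almost-sure bound from (C), is what circumvents this difficulty.
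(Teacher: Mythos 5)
Your proposal takes a genuinely different route from the paper, and it contains one identifiable gap. The paper's proof is a short ``slice-and-average'' argument: it quotes the converse Berry--Esseen-type inequality of Chen and Shao \cite{Chen2005} (inequality (\ref{equ:5.6})), uses $\sum_{j=1}^{k_n}\sigma^{4}_{n,j}\le \max_{1\le j\le k_n}\sigma^{2}_{n,j}$ from condition (C) to get (\ref{equ:5.8}), and then averages over the law of $\nu_n$ to obtain (\ref{equ:5.9}), from which (RCLT) and (RF) immediately give (RL). You instead randomize the classical characteristic-function proof of Feller's converse. Most of your steps are sound: the cosine reduction with $t\asymp 1/\epsilon$, the product-to-exponential null-array estimate (whose error is controlled by $\mathbb{E}\{\max_{1\le j\le\nu_n}\sigma^{2}_{n,j}\}\to 0$ under (RF)), and the final squeeze. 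In fact the squeeze can be shortened: since $B_{n,\nu_n}(t)\le t^{2}/2$ a.s., we have $e^{-2B_{n,\nu_n}(t)}\ge e^{-t^{2}}$ a.s., so $\mathbb{E}\{e^{-2B_{n,\nu_n}(t)}\}\to e^{-t^{2}}$ alone already forces $B_{n,\nu_n}(t)\to t^{2}/2$ in $L^{1}$; the Jensen detour through $\mathbb{E}\{e^{-B_{n,\nu_n}(t)}\}$ is unnecessary.

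The gap is the symmetrization step. (RCLT) is a hypothesis about the original sums; it cannot simply be ``applied to'' the symmetrized sums $\sum_{j=1}^{\nu_n}(X_{n,j}-X'_{n,j})$, because the two copies share the same index $\nu_n$ and are therefore not independent. Writing $\varphi_{n,k}(t)=\prod_{j=1}^{k}f_{n,j}(t)$, the characteristic function of the symmetrized random sum is $\sum_{k}P(\nu_n=k)\,|\varphi_{n,k}(t)|^{2}$, whereas (RCLT) controls only $\sum_{k}P(\nu_n=k)\,\varphi_{n,k}(t)$; Cauchy--Schwarz yields $\liminf_n \sum_k P(\nu_n=k)|\varphi_{n,k}(t)|^{2}\ge e^{-t^{2}}$, but the required upper bound is not automatic. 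For instance, if the slice laws were $\Phi_{0,1}+\delta_n$ and $\Phi_{0,1}-\delta_n$ with equal probabilities and a non-vanishing perturbation $\delta_n$, the mixture would be exactly $\Phi_{0,1}$ (so the literal sup-form of (RCLT) holds), while the mixture of the symmetrizations has characteristic function $e^{-t^{2}}+|\hat\delta_n(t)|^{2}\not\to e^{-t^{2}}$. The repair exists, but it is precisely the paper's own device: the Gut decomposition $\Delta_{n,\nu_n}=\sum_{k}P(\nu_n=k)\Delta_{n,k}$ stated after Definition \ref{def:1.3} (strictly speaking only ``$\le$'' holds there, so what is really being used is the weighted-sum reading $\sum_k P(\nu_n=k)\Delta_{n,k}\to 0$ of the hypothesis, which the paper's own passage from (\ref{equ:5.8}) to (\ref{equ:5.9}) equally requires), combined with the smoothing bound that the Kolmogorov distance of $S_{n,k}-S'_{n,k}$ from the $N(0,2)$ distribution function is at most $2\Delta_{n,k}$. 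With that, the symmetrized random sums do converge to $N(0,2)$, the continuity theorem applies, and your argument goes through. You should also state explicitly that your reduction and the a.s. bound $B_{n,\nu_n}(t)\le t^{2}/2$ use condition (C) in the randomized form $\sum_{j=1}^{\nu_n}\sigma^{2}_{n,j}=1$ a.s.; this is not the literal condition (C), but it is the same tacit strengthening the paper itself makes (e.g.\ in the proof of Lemma \ref{lem:3.1}), so it is consistent with the framework here.
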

\begin{proof}
Following  \cite{Chen2005} (Theorem 5.3, Formula (5.17), Page 36), for any $\epsilon>0,$ we have
\begin{equation}\label{equ:5.6}
\sum\limits_{j=1}^{k_{n}}\int\limits_{|x|\geq \epsilon}x^{2}dF_{n, j}(x)\leq \frac{C}{(1-e^{-\epsilon^{2}/4})}\bigg( \Delta_{n, k_{n}}+\sum\limits_{j=1}^{k_{n}}\sigma^{4}_{n, j}\bigg),
\end{equation}
where $C$ is an absolute constant. \\
With assumption that $\sum\limits_{j=1}^{k_{n}}\sigma^{2}_{n, j}=1,$ we obtain
\begin{equation}\label{equ:5.7}
\sum\limits_{j=1}^{k_{n}}\sigma^{4}_{n, j}\leq \max\limits_{1\leq j\leq k_{n}}\sigma^{2}_{n, j}\times \bigg(\sum\limits_{j=1}^{k_{n}}\sigma^{2}_{n, j}\bigg)=\max\limits_{1\leq j\leq k_{n}}\sigma^{2}_{n, j}.
\end{equation}
Combining (\ref{equ:5.7}) with (\ref{equ:5.6}), it follows that
\begin{equation}\label{equ:5.8}
\sum\limits_{j=1}^{k_{n}}\int\limits_{|x|\geq \epsilon}x^{2}dF_{n, j}(x)\leq \frac{C}{(1-e^{-\epsilon^{2}/4})}\bigg( \Delta_{n, k_{n}}+\max\limits_{1\leq j\leq k_{n}}\sigma^{2}_{n, j}\bigg),
\end{equation}
for any  $\epsilon>0,$ and $C$ is an absolute  positive constant. \\
From (\ref{equ:5.8}), we conclude that  
\begin{equation}\label{equ:5.9}
\mathbb{E}\bigg\{\sum\limits_{j=1}^{\nu_{n}}\int\limits_{|x|\geq \epsilon}x^{2}dF_{n, j}(x)\bigg\}\leq \frac{C}{(1-e^{-\epsilon^{2}/4})} \bigg(\Delta_{n, \nu_{n}}+\mathbb{E}\bigg[\max\limits_{1\leq j\leq \nu_{n}}\sigma^{2}_{n, j}\bigg]\bigg).
\end{equation}
for any  $\epsilon>0,$ and $C$ is an absolute positive constant. \\
 It is clear that if the random central limit theorem (RCLT) is satisfied under the random Feller condition (RF), the random Lindeberg condition (RL) is valid. The proof of the theorem is complete.
 \end{proof}
The preceding conditions, such as (R$\Lambda$), (RL), (RF), and (RI), play an important role in the validity of the central limit theorems in classical situations. Following Zolotarev \cite{Zolotarev1997}, the limit theorems that make no use of the condition (RI) are said to be non-classical. The next result is a random central limit theorem in non-classical situations.
\begin{theorem}\label{the:5.4} (Random Rotar's theorem)\quad  For every $\epsilon>0,$ the following statements are true:
\begin{enumerate}
\item[(\bf {A}).] The central limit theorem for random sum of independent random variables (RCLT) is deduced by  the random Rotar condition (RR), i.e., 
\[
(RR)\Longrightarrow (RCLT) \quad\text{as}\quad n\to\infty.
\]
\item[(\bf {B}).] Under the random Feller condition (RF), the random central limit theorem of independent random variables (RCLT) deduces the random Rotar condition (RR), that is, 
\[
(RF)\quad\&\quad (RCLT)  \Longrightarrow (RR) \quad\text{as}\quad n\to\infty.
\]
\end{enumerate}
 \end{theorem}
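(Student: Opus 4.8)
For part (B) I would not prove anything new: it follows by composing two facts already established. The Random Feller theorem (Theorem \ref{the:5.3}) gives $(RF)\,\&\,(RCLT)\Longrightarrow(RL)$, and Proposition \ref{pro:4.7} gives $(RL)\Longrightarrow(RR)$. Chaining these immediately yields $(RF)\,\&\,(RCLT)\Longrightarrow(RR)$. The only point to verify is bookkeeping: the random Feller condition is invoked in precisely the same place as in the proof of Theorem \ref{the:5.3}, namely to force the term $\mathbb{E}\{\max_{1\le j\le\nu_n}\sigma^{2}_{n,j}\}$ to vanish, after which inequality (\ref{equ:5.9}) delivers $(RL)$ and hence $(RR)$.

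Part (A) carries the non-classical content, and I would run the Zolotarev-distance scheme of Theorem \ref{the:5.1} but at order $2$ rather than $3$. By Lemma \ref{lem:3.1} the limit $X^{*}$ equals $\sum_{j=1}^{\nu_n}X^{*}_{n,j}$ in distribution, so by Proposition \ref{pro:3.2} it suffices to show $\zeta_{2}\big(\sum_{j=1}^{\nu_n}X_{n,j},\sum_{j=1}^{\nu_n}X^{*}_{n,j}\big)=o(1)$, and Proposition \ref{pro:3.5} reduces this to controlling $\mathbb{E}\big\{\sum_{j=1}^{\nu_n}\zeta_{2}(X_{n,j},X^{*}_{n,j})\big\}$. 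The per-summand estimate comes from integration by parts: writing $H_{n,j}=F_{n,j}-\Phi_{n,j}$ and using $\mathbb{E}X_{n,j}=\mathbb{E}X^{*}_{n,j}=0$ (so that $\int H_{n,j}\,dx=0$), one gets $\mathbb{E}f(X_{n,j})-\mathbb{E}f(X^{*}_{n,j})=-\int(f'(x)-f'(0))H_{n,j}(x)\,dx$ for every $f\in\mathcal{D}_{2}$; since $f'$ is $1$-Lipschitz this gives
\[
\zeta_{2}(X_{n,j},X^{*}_{n,j})\le\int_{-\infty}^{+\infty}|x|\,|F_{n,j}(x)-\Phi_{n,j}(x)|\,dx .
\]
Summing over $j$, averaging over $\nu_n$, and splitting the integral at $|x|=\epsilon$, the tail part is exactly the random Rotar quantity in $(RR)$ and vanishes as $n\to\infty$.

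The hard part will be the central region $|x|\le\epsilon$, where the absolute pseudomoment bound above is too lossy: the crude estimate $\int_{|x|\le\epsilon}|x|\,|H_{n,j}|\,dx\le 2\epsilon^{2}$ summed over $j$ produces $2\epsilon^{2}\,\mathbb{E}(\nu_n)$, which is useless because $\nu_n\to\infty$. The resolution must exploit the second matching moment $\mathbb{E}X^{2}_{n,j}=\mathbb{E}(X^{*}_{n,j})^{2}=\sigma^{2}_{n,j}$, equivalently $\int x\,H_{n,j}(x)\,dx=0$, so that the \emph{signed} central contribution satisfies $\int_{|x|\le\epsilon}x\,H_{n,j}\,dx=-\int_{|x|>\epsilon}x\,H_{n,j}\,dx$ and is again controlled by the Rotar tail rather than estimated absolutely. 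This cancellation is precisely the mechanism of the non-classical Rotar theorem (Theorem \ref{the:2.4}), whose proof already contains the required row-wise bound. The cleanest route is therefore to invoke its quantitative form $\Delta_{n,k_n}\le C(\epsilon)\sum_{j=1}^{k_n}\int_{|x|>\epsilon}|x|\,|F_{n,j}-\Phi_{n,j}|\,dx+\omega(\epsilon)$ with $\omega(\epsilon)\to0$, and to average it against $P(\nu_n=k_n)$ through the conditioning identity $\Delta_{n,\nu_n}=\sum_{k_n}P(\nu_n=k_n)\Delta_{n,k_n}$. This yields $\Delta_{n,\nu_n}\le C(\epsilon)\,\mathbb{E}\big\{\sum_{j=1}^{\nu_n}\int_{|x|>\epsilon}|x|\,|F_{n,j}-\Phi_{n,j}|\,dx\big\}+\omega(\epsilon)$; letting $n\to\infty$ (so the first term vanishes by $(RR)$) and then $\epsilon\to0$ completes part (A).
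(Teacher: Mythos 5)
Your part (B) is correct, and it takes a genuinely different route from the paper. The paper does not chain earlier results: it applies Formanov's inequality directly, bounding the row-wise Rotar pseudomoment by $C(\epsilon)\big(\Delta_{n,k_n}+\max_{1\le j\le k_n}\sigma^2_{n,j}\big)$, and then averages against $P(\nu_n=k_n)$ to obtain inequality (\ref{equ:5.11}), after which (RCLT) and (RF) kill the right-hand side. Your composition of Theorem \ref{the:5.3} with Proposition \ref{pro:4.7} is shorter and reuses what is already proved (it passes through (RL) as an intermediary, which the paper's direct argument never does); the paper's version buys a self-contained estimate at the cost of invoking the Formanov bound a second time. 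Both are valid given the paper's cited inequalities.

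For part (A), your final argument is structurally parallel to the paper's but is implemented at the wrong level of the theory, and this is where the one real caveat lies. The paper never touches Zolotarev distances here (that machinery is used only for Theorem \ref{the:5.1}); it cites Shiryaev's characteristic-function inequality in the form (\ref{equ:5.10}), already averaged over the law of $\nu_n$, so that (RR) forces $|\mathbb{E}e^{itS_{n,\nu_n}}-e^{-t^2/2}|\to 0$ for each $t$, and the continuity theorem then yields (RCLT) --- no smoothing, no Kolmogorov-distance estimate needed. You instead average the bound $\Delta_{n,k_n}\le C(\epsilon)\sum_{j}\int_{|x|>\epsilon}|x|\,|F_{n,j}-\Phi_{n,j}|\,dx+\omega(\epsilon)$ over $P(\nu_n=k_n)$. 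The averaging step and the order of limits ($n\to\infty$ first, then $\epsilon\to 0$) are fine, but that quantitative bound is nowhere in the paper and cannot be extracted from the purely qualitative statement of Theorem \ref{the:2.4}; to justify it you would have to apply Esseen's smoothing inequality to the same Shiryaev CF bound (which does give it, uniformly in $k_n$, with $\omega(\epsilon)$ of order a power of $\epsilon$), or cite it from Rotar/Formanov. So your part (A) is correct modulo supplying that ingredient, whereas the paper's CF-level argument avoids the issue entirely. Finally, your diagnosis of why the naive $\zeta_2$ pseudomoment estimate fails in the central region $|x|\le\epsilon$ (the bound $2\epsilon^2\mathbb{E}(\nu_n)$ is useless as $\nu_n\to\infty$) is accurate, and correctly led you to abandon that route; it is a good illustration of why this theorem, unlike Theorem \ref{the:5.1}, is not proved by ideal-metric methods in the paper.
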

\begin{proof}
 ({\bf A}).\quad From \cite{Shiryaev1996} (Theorem 1, formula (6), page 339), it follows that
  \begin{equation}\label{equ:5.10}
\bigg|\mathbb{E}(e^{itS_{n, \nu_{n}}})-e^{-t^{2}/2}\bigg|\leq \epsilon |t|^{3}+2t^{2}\mathbb{E}\bigg\{\sum\limits_{j=1}^{\nu_{n}}\int\limits_{|x|>\epsilon}|x||F_{n, j}(x)-\Phi_{n, j}(x)|dx\bigg\},
\end{equation}
here $\epsilon>0.$ Since $\epsilon >0$ is an arbitrary small and the random Rotar condition  (RR) is satisfied, from above inequality  (\ref{equ:5.10}), it may be concluded that
\[
\bigg|\mathbb{E}(e^{itS_{\nu_{n}}})-e^{-t^{2}/2}\bigg|=o(1)\quad\text{as}\quad n\to\infty.
\] 
In view of the continuity theorem of characteristic functions, the above  limit expression deduces  to the random central limit theorem for a double sequence $\{X_{n, j}\}$ is valid, that is
\[
\Delta_{n, \nu_{n}}=\sup\limits_{x\in\mathbb{R}}\bigg|P\bigg(\sum\limits_{j=1}^{\nu_{n}}X_{n, j}<x \bigg)-\Phi_{0, 1}(x) \bigg|=o(1)\quad\text{as}\quad n\to\infty.
\]
It confirms that the first assertion ({\bf A}) is valid.\\
({\bf B}).\quad On account of $\sum\limits_{j=1}^{k_{n}}\sigma^{2}_{n, j}=1,$ it is clear that 
\[
\sum\limits_{j=1}^{k_{n}}\sigma^{4}_{n, j}\leq \max\limits_{1\leq j\leq k_{n}}\sigma^{2}_{n, j}. 
\]
 According to \cite{Formanov2019} (Theorem 1,  Part 3, Page 33),  we have
\[
\sum\limits_{j=1}^{k_{n}}\int\limits_{|x|>\epsilon}|x||F_{n, j}(x)-\Phi_{n, j}(x)|dx 
\leq C(\epsilon)\bigg(\Delta_{n, k_{n}}+\sum\limits_{j=1}^{k_{n}}\sigma^{4}_{n, j} \bigg)\leq C(\epsilon)\bigg(\Delta_{n, k_{n}}+ \max\limits_{1\leq j\leq k_{n}}\sigma^{2}_{n, j}\bigg),
\]
where $C(\epsilon)>0,$ for any $\epsilon>0.$ \\
Therefore, from the above inequality, it may be conclude that
 \begin{equation}\label{equ:5.11}
\mathbb{E}\bigg\{\sum\limits_{j=1}^{\nu_{n}}\int\limits_{|x|>\epsilon}|x||F_{n, j}(x)-\Phi_{n, j}(x)|dx \bigg\}\leq C(\epsilon)\bigg[\Delta_{n, \nu_{n}}+\mathbb{E}\bigg(\max\limits_{1\leq j\leq \nu_{n}}\sigma^{2}_{j}\bigg) \bigg],
\end{equation}
where $C(\epsilon)>0,$ for any $\epsilon>0.$ In view of (\ref{equ:5.11}), the random Rota condition is satisfied if the random central limit theorem holds (i.e., $\Delta_{n, \nu_{n}}=o(1)$  as $n\to\infty$) under the random Feller condition (RF). Hence, the second  statement ({\bf B}) 
\[
 (RF)\quad\&\quad (RCLT)\Longrightarrow (RR)
\]
is valid. This completes the proof of the theorem.
\end{proof}

\section{Concluding  remarks}\label{sec:5}
To end this paper, we consider a case when the double sequence of independent random variables (in each row) becomes a single sequence, which has many advantages when studying sample statistics, estimating, statistical hypothesis testing, etc. This is done with the linear transformation "series form", much discussed in \cite{Chen2005}, \cite{Gnedenko1949}, \cite{Petrov1995},  \cite{Rotar1996},  \cite{Shiryaev1996}, and \cite{Zolotarev1997}.

Let us consider a simple sequence $(X_{j}, j\geq 1)$ of independent (not necessarily identically distributed) random variables, defined on a probability space $(\Omega, \mathcal{A}, \mathbb{P}).$ From now on, assume that the sequence   $(X_{j}, j\geq 1)$ has expectations $\mathbb{E} X_{j}=a_{j}$ and finite variance $\mathbb{D}X_{j}=\sigma^{2}_{j}\in (0,+\infty),$ for $j\geq 1.$ Denote $B^{2}_{n}=\sum\limits_{j=1}^{n}\sigma^{2}_{j},$ and suggest that $B_{n}=\sqrt{\sum\limits_{j=1}^{n}\sigma^{2}_{j}}>0$ for $n\geq 1$ and $B_{n}\to\infty$ as $n\to\infty.$ 

 From the obtained results in previous parts, the analogous ones may be valid for the sequence $(X_{j}, j\geq 1)$ if the "series form"  $X_{n, j}=\frac{X_{j}-a_{j}}{B_{n}}$ for $j\geq 1, n\geq 1,$ will be used.  
 
 Now,  by setting $S_{n}:=\frac{1}{B_{n}}\sum\limits_{j=1}^{n}(X_{j}-a_{j}),$ we obtain $\mathbb{E}S_{n}=0$ and $\mathbb{D}S_{n}=1.$ 
 
 Thus, our research subject is now the random central limit theorem for the sequence $(X_{j}, j\geq 1),$ presented as follows:
 \begin{equation*}
 \overline{\Delta}_{\nu_{n}}:=\sup\limits_{x\in\mathbb{R}}\bigg|P\bigg(\frac{1}{B_{\nu_{n}}}\sum\limits_{j=1}^{\nu_{n}}(X_{j}-a_{j})<x \bigg)-\Phi_{0,1}(x) \bigg|=o(1)\quad\text{as}\quad n\to\infty,\tag{$\overline{RCLT}$}
 \end{equation*}
 where $(n_{n}, n\geq 1)$ denotes the sequence of integer-valued, positive random variables, independent of all $X_{j}$ for $j\geq 1.$ Assume that $\nu_{n}\stackrel{P}{\longrightarrow}\infty$ and $B^{2}_{\nu_{n}}:=\sum\limits_{j=1}^{\nu_{n}}\sigma^{2}_{j}\longrightarrow\infty$ when $n\to\infty.$
  
Specifically, the formulas analogous to those in Sections 3 and 4 are presented as follows:
\begin{enumerate}
\item A sequence $(X_{j}, j\geq 1)$  is said to satisfy random Lindeberg condition  if for any $\epsilon >0$ 
\begin{equation*}
\mathbb{E}\bigg\{\frac{1}{B^{2}_{\nu_{n}}}\sum\limits_{j=1}^{\nu_{n}}\mathbb{E}\bigg[(X_{j}-a_{j})^{2}\mathbb{I}(|X_{j}-a_{j}|>\epsilon B_{\nu_{n}})\bigg]\bigg\}=o(1)\quad\text{as}\quad n\to\infty.\tag{$\overline{RL}$}
\end{equation*} 
\item The random Liyapunov condition is valid for a sequence $(X_{j}, j\geq 1)$  if for any $\delta \in (0, 1]$ 
\begin{equation*}
\mathbb{E}\bigg\{\frac{1}{B^{2+\delta}_{\nu_{n}}}\sum\limits_{j=1}^{\nu_{n}}\mathbb{E}\bigg(|X_{j}-a_{j}|^{2+\delta}\bigg) \bigg\}=o(1)\quad\text{as}\quad n\to\infty.\tag{$\overline{R\Lambda}$}
\end{equation*} 
\item A sequence  $(X_{j}, j\geq 1)$  is said to obey the random Feller condition if 
\begin{equation*}
\mathbb{E}\bigg\{\frac{1}{B^{2}_{\nu_{n}}}\max\limits_{1\leq j\leq \nu_{n}}\mathbb{E}(X_{j}-a_{j})^{2}\bigg\}=o(1)\quad\text{as}\quad n\to\infty.\tag{$\overline{RF}$}
\end{equation*} 
\item The independent individual summands $X_{j} (j\geq 1)$ in random sum $\frac{1}{B_{\nu_{n}}}\sum\limits_{j=1}^{\nu_{n}}(X_{j}-a_{j})$  are said to satisfy the random condition of asymptotic infinitesimality if, for any $\epsilon >0$ 
\begin{equation*}
\mathbb{E}\bigg[\max\limits_{1\leq j\leq \nu_{n}}P\big(|X_{j}-a_{j}|\geq \epsilon B_{\nu_{n}}\big) \bigg]=o(1)\quad\text{as}\quad n\to\infty.\tag{$\overline{RI}$}
\end{equation*} 
\item The random Rotar condition is defined in the following form:
\begin{equation*}
\mathbb{E}\bigg\{\frac{1}{B_{\nu_{n}}}\sum\limits_{j=1}^{\nu_{n}}\int\limits_{|x|>\epsilon B_{\nu_{n}}}|x-a_{j}||F_{j}(x)-\Phi_{j}(x)|dx\bigg\}=o(1)\quad\text{as}\quad n\to\infty,\tag{$\overline{RR}$}
\end{equation*}
for any $\epsilon >0.$ Here $F_{j}(x)=P(X_{j}-a_{j}<x)$ and $\Phi_{j}(x)=\Phi_{0,1}(\frac{x-a_{j}}{\sigma_{j}}).$
\end{enumerate}
Now we have the relationships between the above conditions and the  central limit theorems in classical and non-classical situations, which present the following  implications:
\begin{enumerate}
\item[({\bf A.})]\quad 
\[
(\overline{R\Lambda})\Longrightarrow (\overline{RL})\Longrightarrow (\overline{RF}) \Longrightarrow (\overline{RI}) \quad\text{as}\quad n\to\infty. 
\]
\item[({\bf B.})]\quad 
\[
(\overline{RL})\Longrightarrow (\overline{RR})\quad\text{as}\quad n\to\infty. 
\]
\item[({\bf D.})]\quad The random Lyapunov central limit theorem states that
\[
(\overline{R\Lambda})\Longrightarrow (\overline{RCLT})\quad\text{as}\quad n\to\infty. 
\]
\item[({\bf E.})]\quad The random Lindeberg central limit theorem confirms that
\[
(\overline{RL})\Longrightarrow (\overline{RCLT})\quad\text{as}\quad n\to\infty. 
\]
\item[({\bf F.})]\quad The random Feller central limit theorem formulates that
\[
(\overline{RF})\quad\&\quad  (\overline{RCLT})\Longrightarrow (\overline{RL})\quad\text{as}\quad n\to\infty. 
\]
\item[({\bf G.})]\quad The random Rota central limit theorem says that
\begin{enumerate}
\item[{i.}] 
\[
(\overline{RR})\Longrightarrow (\overline{RCLT}), \quad\text{as}\quad n\to\infty. 
\]
\item[{ii.}] 
\[
(\overline{RCLT})\quad\&\quad  (\overline{RF})\Longrightarrow (\overline{RR})\quad\text{as}\quad n\to\infty. 
\]
\end{enumerate}
\end{enumerate}

\noindent
{\bf Acknowledgment.}\quad    The author wishes to express his thanks to Dr. Nguyen Tran Thuan from the Department of Mathematics at Saarland University (Germany) for his help in getting the necessary books for writing this article.

\noindent
Author's address:\\
Tran Loc Hung\\
Ho Chi Minh City, Vietnam.\\
tlhungvn@gmail.com

\end{document}